\newtheorem{thm}{Theorem}
\newtheorem{lem}[thm]{Lemma}
\newtheorem{rmk}{Remark}
\newtheorem{definition}{Definition}
\newcommand{\one}{\mathbbm{1}}
\newcommand{\PP}{{\mathbbm{P}}}
\newcommand{\RR}{\mathbbm{R}}
\newcommand{\EE}{\mathbbm{E}}
\newcommand{\ud}{{\mathrm{d}}}
\newcommand{\mm}{\mathbbm{m}}
\newcommand{\DD}{\mathbbm{D}_{1,2}}
\newcommand{\bor}{\mathcal{B}}
\newcommand{\ftn}{{\cal F}}
\newcommand{\pink}{}
\begin{document}

\title{Malliavin smoothness on the L\'evy space with H\"older continuous or $BV$ functionals}

\author{
	Eija Laukkarinen\\ {\normalsize E-mail:~\textsf{eija.laukkarinen@jyu.fi}}	\\
	{\normalsize Department of Mathematics and Statistics, }\\
	{\normalsize University of Jyv\"{a}skyl\"{a}, Finland }
}

\date{October 22, 2019}

\maketitle

\begin{abstract}
	We consider Malliavin smoothness of random variables $f(X_1)$, where $X$ is a pure jump L\'evy process and  the function $f$
	is either bounded and H\"older continuous or of bounded variation. We show that Malliavin differentiability and fractional 
	differentiability of $f(X_1)$ depend both on the regularity of $f$ and the Blumenthal-Getoor index of the 
	L\'evy measure.
	\smallskip
 
	{\parindent0em {Keywords:} L\'{e}vy process, Malliavin calculus, interpolation}\\
	{\parindent0em {AMS2010 Subject Classification:}
	60G51, 
	60H07 
	}
\end{abstract}


\section{Introduction}


Consider a L\'evy process $Y$ and the according Malliavin Sobolev space $\DD$ based 
on the It\^{o} chaos decomposition on the L\'evy space of square integrable random variables. 
We recall the space $\DD$ in Section \ref{section:chaos}. We are interested in the ways that
Malliavin differentiability of $f(Y_1)$ depends on the properties of $f$  and the properties of $Y$.

The process $Y$ consists of three components
\[
	Y_t = \gamma t + \sigma B_t + X_t,
\]
where $\gamma,\sigma\in\RR$, $B$ is a  standard Brownian motion and $X$ is a pure jump process. 
For the Brownian motion we have that $f(B_1) \in \DD$  if and only if 
$f\in W^{1,2}(\RR;\PP_{ B_1})$ (see, for instance, 
 Nualart \cite[Exercise 1.2.8]{nualart1995}).
We also examine fractional 
differentiability which is determined by the real interpolation spaces $(L_2(\PP),\DD)_{\theta,q}$ 
between $L_2(\PP)$ and $\DD$ (see Section \ref{section:interpolation}). The fractional 
smoothness of $f(B_1)$ means that $f$ is in a weighted Besov space (see S. Geiss and Hujo \cite{geiss-hujo}, for example). 
In this paper we focus on the pure jump L\'evy process  with $\gamma =0$ and $\sigma=0$. We search for 
properties of  the function $f$ and the L\'evy measure $\nu$  of $X$, which are related to  the smoothness of $f(X_1)$. 
It turns out that Malliavin 
smoothness is in connection to the Blumenthal-Getoor index 
\[
	\beta = \inf \{\xi \geq 0: m_\xi < \infty \}, \quad \text{where}\quad
	m_\xi := \int_\RR \left( |x|^\xi \wedge 1 \right) \nu (\ud x).
\]
We show that the smaller the index $\beta$ is, the higher smoothness of $f(X_1)$ we have for a given $f$ which is 
 H\"older continuous or of bounded variation.

So far little is known about the question for which $f$ and for which $\nu$ one 
has  $f(X_1) \in \DD$ or $f(X_1) \in (L_2(\PP),\DD)_{\theta,q}$. The note \cite{laukkarinenCPP} enlightens 
the case where $\nu(\RR)<\infty$: Then 
\[
	f(X_1)\in\DD \quad \text{if and only if}  \quad \EE \big[ f^2(X_1) \big( N((0,1]\times\RR) + 1 \big) \big]< \infty
\] 
and  
\[
	f(X_1)\in(L_2(\PP),\DD)_{\theta,2} \quad \! \text{if and only if} \! \quad \EE \left[ f^2(X_1) \! \left( N((0,1]\times\RR)^\theta \! + 1 \right) \right]< \infty,
\] 
where $N$ is the Poisson 
random measure associated with $X$ (see Section \ref{section:preliminaries}).

A L\'evy measure $\nu$ always satisfies the property  $m_2 < \infty$, and from Sol\'e, Utzet and Vives 
\cite{sole-utzet-vives} we know that 
\[
	\|f(X_1)\|^2_{\DD} = \|f(X_1)\|^2_{L_2(\PP)} + \int_\RR \EE\left[ \left( f(X_1 + x) - f(X_1) \right)^2 
	\right] \nu(\ud x).  
\]
Since $m_2<\infty$, it follows that $f(X_1) \in \DD$  for any $f$ that is Lipschitz continuous and bounded. On the other hand, if the L\'evy measure $\nu$ is finite, 
then it is sufficient 
that $f$ is bounded to have $f(X_1) \in \DD$. In Section \ref{section:Holder} we shall examine intermediate 
cases, namely that $f$ is bounded and H\"older continuous, that is, in $C^\alpha_b$. In Theorem 
\ref{th:holder_functions} we prove that 
\[ 
	f(X_1)\in\DD \text{ for all } f\in C^\alpha_b \quad \text{if and only if}\quad m_{2\alpha} < \infty,
\]
where the necessity of the condition $m_{2\alpha}<\infty$ holds under assumption \ref{A.pos} given in Section \ref{section:densities}.
 For fractional smoothness we obtain in Theorem \ref{th:Holder_fractional} for $0<\alpha\leq \theta < 1$, that
\[
	 f(X_1)\in(L_2(\PP),\DD)_{\theta,\infty} \text{ for all }f\in C^\alpha_b\quad \text{if} \quad m_{2\alpha/\theta}<\infty,
\]
and under assumption \ref{A.pos2}, that
\[
	f(X_1)\in(L_2(\PP),\DD)_{\theta,\infty} \text{ for all }f\in C^\alpha_b \quad \text{only if} \quad m_{2\alpha/\theta+\varepsilon}<\infty
\]
for all $\varepsilon>0$.  
In Section \ref{section:stable} we see that if the process $X$ is strictly stable and symmetric and $2\alpha/\theta$ is equal to the Blumenthal-Getoor index $\beta$, 
then  $f(X_1)\in(L_2(\PP),\DD)_{\theta,\infty}$ 
 for all $f\in C^\alpha_b$ eventhough $m_{2\alpha/\theta} = m_{\beta} = \infty$.

We also consider normalized functions of bounded 
variation ($NBV$, see Section \ref{section:BV}). In Theorem 
\ref{th:BV_gives_DD} we prove that under assumptions \ref{A.bdd} and \ref{A.pos} it holds that
\[
	f(X_1)\in\DD \text{ for all } f\in NBV \quad \text{if and only if}\quad m_{1} < \infty.
\]
In \cite[Section 4.2]{geiss-geiss-laukkarinen} it was shown that 
$\one_{(K,\infty)}(Y_1)\in (L_2(\PP),\DD)_{1/2,\infty}$, when $Y_1$ has a bounded density. We obtain a sharper smoothness index
for the pure jump process:
Theorem \ref{th:BV_fractional} states that under assumption \ref{A.bdd} it holds that
\[
	 f(X_1)\in(L_2(\PP),\DD)_{\theta,\infty} \text{ for all }f\in NBV \quad \text{if} \quad m_{1/\theta}<\infty,
\]
and under assumption \ref{A.pos2} it holds that
\[
	f(X_1)\in(L_2(\PP),\DD)_{\theta,\infty} \text{ for all }f\in NBV \quad \text{only if} \quad m_{1/\theta+\varepsilon}<\infty
\]
for all $\varepsilon>0$.  
In Section \ref{section:stable} we see that if the process $X$ is strictly stable and symmetric and $1/\theta = \beta$, then  
$f(X_1)\in(L_2(\PP),\DD)_{\theta,\infty}$  for all $f\in NBV$ eventhough $m_{1/\theta} = m_{\beta} = \infty$. 

The method in Section \ref{section:fractional_smoothness} is based on a characterization of fractional smoothness 
which was introduced for the Brownian motion by S. Geiss and Hujo \cite{geiss-hujo}, and which we translate for 
jump processes in Lemma \ref{lemma:characterization_for_fractional_smoothness}.

\subsection{Motivation}

Malliavin smoothness and fractional smoothness play a role for example in discrete approximation of stochastic 
integrals and in the investigation of properties of backward stochastic differential equations (BSDEs):
Consider the orthogonal Galtchouk-Kunita-Watanabe decomposition of $f( Y_1)$, that is,
\[
	f( Y_1) = c + \int_0^1 \varphi_t\, \ud  Y_t + \mathcal{E}.
\]
Then the convergence rate of the equidistant
Riemann-approximation of the integral  depends on the smoothness parameter of $f(Y_1)$. On the other hand, if 
$f( Y_1)$ admits fractional smoothness, then it is possible to adjust the discretization points to obtain the 
best possible convergence rate. (See Geiss et al. \cite{geiss-geiss-laukkarinen}.) The $L_p$-variation of the 
solution 
of certain BSDEs
depends on the Malliavin fractional smoothness of the 
terminal condition $f(Y_1)$. This was shown with more general terminal conditions for the Brownian motion by 
C. Geiss, S. Geiss and Gobet \cite{geiss-geiss-gobet} and S. Geiss and Ylinen \cite{geiss-ylinen} and for $p=2$ for general $L_2$-L\'{e}vy processes by 
C. Geiss and Steinicke \cite{geiss-steinicke2016}. 


\section{Preliminaries} 
\label{section:preliminaries}


Consider a pure jump L\'evy process $X = (X_t)_{t\geq0}$ with c\`{a}dl\`{a}g paths on a complete probability 
space $({\Omega},{\cal F},\mathbbm{P})$ where ${\cal F}$ is the completion of the sigma-algebra generated by 
$X$. The L\'{e}vy-It\^{o} decomposition of a pure jump L\'evy process is
\[
	X_t = \iint_{(0,t]\times \{ |x|>1\}} x N(\ud s,\ud x) 
	+ \iint_{(0,t]\times\left\{0<|x|\leq1\right\}}x\tilde{N}(\ud s,\ud x),
\]
where $N$ is a Poisson random measure on $\mathcal{B}([0,\infty)\times\mathbbm{R})$ and 
$\tilde{N}(\ud s,\ud x) = N(\ud s,\ud x)-\ud s\nu(\ud x)$ is the 
compensated Poisson random measure. The measure $\nu:\mathcal{B}(\mathbbm{R})\to[0,\infty]$ is the L\'{e}vy 
measure of $X$ satisfying $\nu(\{0\})=0$, $\int_\mathbbm{R} (x^2\wedge1)\nu(\ud x)<\infty$ and 
$\nu(B)=\EE \left[ N((0,1]\times B) \right]$.

\subsection{It\^{o} chaos decomposition and the Malliavin Sobolev space} \label{section:chaos}

Denote $\mathbbm{R}_+ := [0,\infty)$. We consider the measure 
$\mathbbm{m}\colon\mathcal{B}(\mathbbm{R}_+\times\mathbbm{R})\to[0,\infty]$ defined as 
\[ 
	 \mm(A) :=  \int_A x^2 \ud t \nu(\ud x) = \EE \left[ \left(   \int_A x \tilde{N}(\ud t, \ud x) \right)^2 \right].
\]
For $n=1,2,\ldots$ we write $L_2(\mathbbm{m}^{\otimes n}) := L_2 \left((\RR_+\times\RR)^n, 
\bor(\RR_+\times\RR)^{\otimes n}, \mathbbm{m}^{\otimes n}\right)$ and set 
$L_2(\mathbbm{m}^{\otimes 0}):=\mathbbm{R}$. A function $f_n:(\mathbbm{R}_+\times\mathbbm{R})^n\to\mathbbm{R}$ 
is said to be symmetric, if it coincides with its symmetrization $\tilde{f}_n$,
\[
	\tilde{f}_n((s_1,x_1),\ldots,(s_n,x_n))=\frac{1}{n!}\sum_{\pi}f_n\left((s_{\pi(1)},x_{\pi(1)}),\ldots,
	(s_{\pi(n)},x_{\pi(n)})\right),
\]
where the sum is taken over all permutations $\pi:\{1,\ldots,n\}\to\{1,\ldots,n\}$.
			
We consider It\^{o}'s multiple stochastic integral $I_n:L_2(\mathbbm{m}^{\otimes n})\to L_2(\mathbbm{P})$ of 
order $n$ with respect to the measure $x\tilde{N}(\ud t,\ud x)$. According to \cite[Theorem 2]{ito} it holds 
that
\[
	L_2(\mathbbm{P}) =  \RR \oplus  \bigoplus_{n=1}^\infty \{I_n(f_n):f_n\in L_2(\mathbbm{m}^{\otimes n})\}.
\]
The functions $f_n$ in the representation $F=\sum_{n=0}^\infty I_n(f_n)$ in $L_2(\mathbbm{P})$ are unique when 
they are chosen to be symmetric, which is always possible since $I_n(f_n) = I_n(\tilde{f_n})$. Moreover, we have
\[
	\EE\left[ I_n(f_n)I_k(g_k)\right] = \begin{cases}
	0, & \text{ if }n\neq k\\
	 n!(\tilde{f_n},\tilde{g_n})_{L_2(\mm^{\otimes n})} & \text{ if }n = k
	\end{cases}
\]
and
\[
	\|F\|^2_{L_2(\mathbbm{P})} = \sum_{n=0}^\infty n! 
	\left\| \tilde{f}_n \right\|^2_{L_2(\mathbbm{m}^{\otimes n})}.
\]

In this paper we focus on  random variables of the form $f(X_1)$, where $f:\RR\to\RR$ is a Borel 
function. We will take advantage of the following lemma in Sections \ref{section:Holder} and \ref{section:fractional_smoothness}.

\begin{lem}\label{lemma:chaos_martingale}
	Let  $f( X_1) = \sum_{n=0}^\infty I_n(f_n) \in L_2(\PP)$ and let $(\ftn_t)_{t\geq 0}$ be 
	the augmented natural filtration of $X$. Then 
	\begin{itemize}
		\item[(a)] there are functions $g_n\in L_2\left((x^2\nu(\ud x))^{\otimes n}\right)$ such that 
		\[
			\tilde{f_n}((t_1,x_1),\ldots,(t_n,x_n)) = g_n(x_1,\ldots,x_n)
			\one_{[0,1]^{\times n}}(t_1,\ldots,t_n)
		\]
		for $\mm^{\otimes n}$-a.e. $((t_1,x_1),\ldots,(t_n,x_n))\in (\RR_+\times\RR)^{\times n}$ and
		\item[(b)] $\EE \left[ \EE \left[ f(X_1) | \ftn_t  \right]^2\right] = \sum_{n=0}^\infty t^n  n! 
		\|\tilde{f_n}\|^2_{L_2(\mm^{\otimes n})}$.
	\end{itemize}
\end{lem}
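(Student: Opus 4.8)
The plan is to use the abstract chaos machinery together with the fact that $f(X_1)$ is a function of the process observed only up to time $1$. For part~(a), the key observation is that $f(X_1)$ is $\ftn_1$-measurable. First I would recall that the Itô integrand of a random variable measurable with respect to the jumps occurring in the time window $(0,1]$ must itself be supported (in the time variables) on $(0,1]$; concretely, if $\tilde f_n$ had positive $\mm^{\otimes n}$-mass on a set where some $t_i>1$, one could project onto the filtration $\ftn_1$ and see a contradiction, since conditioning on $\ftn_1$ truncates each kernel to $[0,1]^{\times n}$ (this truncation is exactly the content that feeds into part~(b)). Having localized the time variables to $[0,1]^{\times n}$, the remaining point is that $f_n$ must not depend on the $t_i$'s on that set: this follows from the stationarity and independence of the increments of $X$ — equivalently from the fact that the law of $f(X_1)$ is invariant under measure-preserving transformations of $[0,1]^{\times n}$ in the time coordinate, so by uniqueness of the symmetric kernels $\tilde f_n$ must be a.e.\ constant in time on $[0,1]^{\times n}$. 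Writing that constant value as $g_n(x_1,\dots,x_n)$, the isometry $n!\|\tilde f_n\|_{L_2(\mm^{\otimes n})}^2 = n!\|g_n\|_{L_2((x^2\nu(\ud x))^{\otimes n})}^2$ shows $g_n\in L_2((x^2\nu(\ud x))^{\otimes n})$, which gives~(a).

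For part~(b), I would compute $\EE[f(X_1)\mid\ftn_t]$ chaos-wise. Since $X$ has independent increments, the conditional expectation acts on each multiple integral by restricting the kernel to the time simplex $[0,t]^{\times n}$: that is, $\EE[I_n(f_n)\mid\ftn_t] = I_n\big(f_n\,\one_{[0,t]^{\times n}}\big)$, which one checks first for simple (off-diagonal indicator) kernels and then extends by the $L_2(\mm^{\otimes n})$-isometry and density. Combining this with part~(a), $\EE[f(X_1)\mid\ftn_t] = \sum_{n=0}^\infty I_n\big(g_n\,\one_{[0,t]^{\times n}}\big)$, and the orthogonality/isometry of distinct chaoses yields
\[
	\EE\big[\EE[f(X_1)\mid\ftn_t]^2\big] = \sum_{n=0}^\infty n!\,\big\|g_n\,\one_{[0,t]^{\times n}}\big\|_{L_2(\mm^{\otimes n})}^2 = \sum_{n=0}^\infty n!\,t^n\,\|g_n\|_{L_2((x^2\nu(\ud x))^{\otimes n})}^2,
\]
where the last equality is just Fubini in the time variables, each contributing a factor $t$. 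Rewriting $t^n\|g_n\|^2 = t^n\,\|\tilde f_n\|_{L_2(\mm^{\otimes n})}^2$ via the identity from~(a) gives the claimed formula.

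I expect the main obstacle to be the rigorous justification that $\EE[I_n(f_n)\mid\ftn_t] = I_n\big(f_n\,\one_{[0,t]^{\times n}}\big)$, i.e.\ that conditioning on the past truncates the kernel cleanly. The delicate point is handling the symmetrization: $f_n\,\one_{[0,t]^{\times n}}$ need not be the symmetrization of $\tilde f_n\,\one_{[0,t]^{\times n}}$ unless one is careful, but since $\tilde f_n$ is already symmetric the indicator $\one_{[0,t]^{\times n}}$ is itself symmetric, so the product stays symmetric and no new symmetrization is needed. The cleanest route is to verify the identity on the dense subclass of kernels that are finite linear combinations of indicators of products of disjoint rectangles in $\bor(\RR_+\times\RR)$ — for those, $I_n$ is an explicit iterated stochastic integral and the conditional expectation is computed directly from the martingale property of $\tilde N$ and independence of increments — and then pass to the limit using that both sides are bounded operators on $L_2(\mm^{\otimes n})$ (conditional expectation is an $L_2$-contraction, and $f_n\mapsto f_n\,\one_{[0,t]^{\times n}}$ has norm $\le 1$). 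Everything else is bookkeeping with the chaos isometry.
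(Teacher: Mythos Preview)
Your proposal is correct and matches the paper's approach: the paper proves~(a) by citing \cite[Remark~6.7]{baumgartner-geiss} on permutation-invariant L\'evy functionals and proves~(b) by invoking the analogue of \cite[Lemma~1.2.4]{nualart1995}, which is precisely the kernel-truncation identity $\EE[I_n(f_n)\mid\ftn_t]=I_n(\tilde f_n\one_{[0,t]^{\times n}})$ that you spell out and justify via simple kernels and density. One cosmetic point in your sketch of~(a): it is the \emph{random variable} $f(X_1)$, not merely its law, that is invariant under measure-preserving time-reparametrizations of $(0,1]$, and it is this invariance together with uniqueness of the symmetric kernels that forces $\tilde f_n$ to be a.e.\ constant in the time variables---two distinct random variables with the same law can have different chaos kernels, so invariance in law alone would not suffice.
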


\begin{proof}
(a)  Follows from \cite[Remark 6.7]{baumgartner-geiss}.
(b) By analogous argumentation to \cite[Lemma 1.2.4]{nualart1995} we see that
	$\EE \left[ f(  X_1) | \ftn_t  \right] = \sum_{n=0}^\infty I_n(g_n\one_{[0,t]^{\times n}})$. The claim follows 
	from $\|\tilde{f_n}\|_{L_2(\mm^{\otimes n})} = \|g_n\|_{L_2( (x^2\nu(\ud x))^{\otimes n})}$.
\end{proof}

We define the Malliavin Sobolev space using It\^{o}'s chaos decomposition (as \cite{nualart-vives, 
dinunno-meyerbrandis-oksendal-proske, sole-utzet-vives, sole-utzet-vives_chaoses, applebaum2, 
geiss-laukkarinen} and many others). We denote by $\mathbbm{D}_{1,2}$ the space of all $F = \sum_{n=0}^\infty
I_n(f_n) \in L_2(\mathbbm{P})$ such that
\[
	\|F\|^2_{\mathbbm{D}_{1,2}} :=  \|F\|^2_{L_2(\PP)} + \sum_{n=1}^\infty n n! \left\| \tilde{f}_n \right\|^2_{L_2(\mathbbm{m}^{\otimes n})} 
	= \sum_{n=0}^\infty (n+1)! \left\| \tilde{f}_n \right\|^2_{L_2(\mathbbm{m}^{\otimes n})}
	< \infty.
\]
Let us write $L_2(\mathbbm{m}\otimes\mathbbm{P}) := L_2(\mathbbm{R}_+\times\mathbbm{R}\times{\Omega},
\mathcal{B}(\mathbbm{R}_+\times\mathbbm{R})\otimes{\cal F}, \mathbbm{m}\otimes\mathbbm{P})$. The
Malliavin derivative $D:\mathbbm{D}_{1,2}\to L_2(\mathbbm{m}\otimes\mathbbm{P})$ is defined for $F\in\DD$ by
\[
	D_{t,x}F = \sum_{n=1}^\infty n I_{n-1} (\tilde{f}_n(\cdot,(t,x))) \quad \textrm{in }
	L_2(\mathbbm{m}\otimes\mathbbm{P}).
\]

From \cite[Proposition 5.4]{sole-utzet-vives} we have in the canonical probability space that 
\begin{align}\label{eq:quotient_in_L_2}
	& \|f(X_1)\|^2_{\DD} \nonumber \\
	& = \|f(X_1)\|^2_{L_2(\PP)} + \int_{[0,1]\times\RR\setminus\{0\}} \EE \left[ \left( \frac{ f(X_1+x) - f(X_1) }{x}  \right)^2 \right] \mm(\ud t, \ud x) \nonumber \\
	& = \|f(X_1)\|^2_{L_2(\PP)} + \int_\RR \EE\left[ \big(f(X_1+x)-f(X_1)\big)^2 \right] \nu(\ud x),
\end{align}
and  when $f(X_1)\in\DD$, then
\begin{equation}\label{eq:quotient}
	D_{t,x}f(X_1) = \frac{ f(X_1+x) - f(X_1) }{x} \one_{[0,1]\times \RR\setminus\{0\}}(t,x) \quad \mm\otimes\PP\text{-a.e.}
\end{equation}
 The result was converted to the general probability space in \cite[Lemma 3.2]{geiss-steinicke}.
  
For the Brownian motion $B$, the space $\DD$ is defined in an analogous way by a chaos decomposition, but the property 
\eqref{eq:quotient_in_L_2}  can not be formulated (see \cite{nualart1995}).

\subsection{Interpolation and Malliavin fractional smoothness} \label{section:interpolation}

The interpolation space $(A_0,A_1)_{\theta,q}$ is a Banach space, intermediate between two Banach spaces $A_0$ 
and $A_1$ which are a compatible couple, that is, they are continuously embedded into a Hausdorff topological 
vector space.
			
When $(A_0,A_1)$ is a compatible couple, the \emph{K-functional} of $ a\in A_0 + A_1$ is the mapping
$K( a, \cdot\,  ;A_0, A_1) : (0,\infty) \to [0,\infty)$ defined by 
\[ 
	K( a,t; A_0, A_1) := \inf\{ \| a_0\|_{A_0} + t\| a_1\|_{A_1}:\  a= a_0+ a_1,\ a_0\in A_0,\ a_1\in A_1 \}.
\]
Let $\theta\in(0,1)$ and $q\in[1,\infty]$. The \emph{real interpolation space} $(A_0,A_1)_{\theta,q}$ consists of all 
$a\in A_0 + A_1 :=\{a_0+a_1 : a_0\in A_0,\, a_1\in A_1\}$
such that  the norm
\[
	\|a\|_{(A_0,A_1)_{\theta,q}} =  \left\{
	\begin{aligned}
		& \left[ \int_0^\infty \left( t^{-\theta} K( a,t; A_0, A_1) \right)^q \frac{\ud t}{t} \right]^{\frac{1}{q}},
		                                         & q \in[1,\infty) \\
		& \sup_{t>0} t^{-\theta} K(a,t ; A_0,A_1), &   q  = \infty \hspace{1.4em}    
	\end{aligned} \right.
\]
is finite. If $A_1 \subseteq A_0$ with continuous embedding, then 
\begin{equation}\label{eq:lexicographical_order}
	A_1 \subseteq (A_0,A_1)_{\theta,q} \subseteq (A_0,A_1)_{\eta,p} \subseteq (A_0,A_1)_{\eta,q} \subseteq A_0
\end{equation}
for $ 0 < \eta < \theta < 1$ and $1\leq p \leq q \leq \infty$.

From the Reiteration Theorem we know that for 
$\eta,\theta\in(0,1)$ and $q\in[1,\infty]$ one has
\begin{equation}
	\label{eq:reiteration}
	(A_0,(A_0,A_1)_{\eta,\infty})_{\theta,q} = (A_0,A_1)_{\eta\theta,q}
\end{equation}
with
\begin{equation}
	\label{eq:reiteration_norms} 
	\| a\|_{(A_0,A_1)_{\eta\theta,\infty}} 
	\leq \|a\|_{(A_0,(A_0,A_1)_{\eta,\infty})_{\theta,\infty}} 
	\leq 3 \| a\|_{(A_0,A_1)_{\eta\theta,\infty}}
\end{equation}
for all $a\in (A_0,A_1)_{\eta\theta,\infty} = \left(A_0,(A_0,A_1)_{\eta,\infty} \right)_{\theta,\infty}$.
In the literature the Reiteration Theorem is usually given in a more general context and the constants $1$ and $3$ in the 
norm equivalence \eqref{eq:reiteration_norms} are not computed explicitely. Therefore we verify 
\eqref{eq:reiteration_norms} in Lemma \ref{lemma:constantA}. For further properties of 
interpolation spaces, see for instance \cite{bennett-sharpley}, \cite{bergh-lofstrom} or \cite{triebel}.
				
We say that
a random variable admits fractional smoothness of order $(\theta,q)$ if it belongs to the interpolation space
\[
	 \left(L_2(\PP), \DD  \right)_{\theta,q}, 
\]
where $\theta\in(0,1)$ and $q\in[1,\infty]$.

\subsection{Assumptions about a density}\label{section:densities}

Some of the assertions in this paper rest on the following assumptions:
\begin{enumerate}[label=(\textbf{A\arabic*})]
	\item $X_1$  has a bounded density $p_1$.  \label{A.bdd}
	\item $X_1$  has a density $p_1$  and there exist  $a,b,c \in\RR$ with $c>0$  and $b-a>0$ 
					      such that  $p_1(x) \geq c$  for all $x\in[a,b]$.  \label{A.pos}
	\item There exist $t_0\in(0,1)$ and $a,b,c \in\RR$ with $c>0$ and $b-a>0$ such that for all $t\in[t_0,1]$, the random variable $X_t$ has a density $p_t$
	     such that $p_t(x) \geq c$  for all $x\in[a,b]$.\label{A.pos2}
\end{enumerate}
 Note that the conditions \ref{A.bdd}, \ref{A.pos} and \ref{A.pos2} are satisfied, for example, when
the condition 
\[
	\ell:=\liminf_{|u|\to\infty} \frac{\int_{\RR} \sin^2(ux) \nu(\ud x)}{\log|u|} > \frac12
\]
of Hartman and Wintner \cite{hartman-wintner} holds. We formulate the argumentation in a lemma as it will be used later.

\begin{lem}\label{lem:assumptions}
	Assume that $\ell>1/2$. Then \ref{A.bdd}, \ref{A.pos} and \ref{A.pos2} are satisfied.
\end{lem}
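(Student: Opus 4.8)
The plan is to recall the classical Hartman–Wintner theorem and then read off each of the three conditions from the qualitative and quantitative consequences it provides. The Hartman–Wintner condition $\ell>1/2$ is known to imply that for every $t>0$ the random variable $X_t$ has a density $p_t$ which, moreover, is continuous and bounded; this is the content of \cite{hartman-wintner}, obtained by showing that $|\widehat{X_t}(u)| = \exp\!\left(-t\int_\RR(1-\cos(ux))\,\nu(\ud x)\right)$ decays like $|u|^{-2\ell t + o(1)}$ as $|u|\to\infty$, so that for $t$ large enough the characteristic function is integrable and Fourier inversion gives a bounded continuous density; a scaling/semigroup argument then upgrades this to all $t>0$. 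Applying this with $t=1$ yields a bounded continuous density $p_1$, which is exactly \ref{A.bdd}.

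For \ref{A.pos} I would argue as follows. Since $p_1$ is continuous (from Hartman–Wintner) and $\int_\RR p_1(x)\,\ud x = 1$, the density is not identically zero, so there is a point $x_0$ with $p_1(x_0)>0$; by continuity there is a closed interval $[a,b]\ni x_0$ with $b-a>0$ and a constant $c>0$ such that $p_1(x)\geq c$ for all $x\in[a,b]$. This gives \ref{A.pos}. For \ref{A.pos2} the same reasoning must be made uniform over $t$ in a neighbourhood of $1$. Here I would use that under $\ell>1/2$ the map $(t,x)\mapsto p_t(x)$ can be taken jointly continuous on $(0,\infty)\times\RR$: indeed for $t\geq t_0>0$ with $2\ell t_0>1$ the bound $|\widehat{X_t}(u)|\le |u|^{-2\ell t_0+o(1)}$ holds uniformly and dominated convergence in the inversion formula $p_t(x)=\frac{1}{2\pi}\int_\RR e^{-iux}\widehat{X_t}(u)\,\ud u$ gives joint continuity. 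Picking $t_0\in(0,1)$ with $2\ell t_0>1$, joint continuity on the compact set $[t_0,1]\times\{x_0\}$ plus $p_1(x_0)>0$ yields, after possibly shrinking, an interval $[a,b]$ and a constant $c>0$ with $p_t(x)\geq c$ for all $t\in[t_0,1]$ and $x\in[a,b]$, which is \ref{A.pos2}.

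The main obstacle is not any single hard estimate but rather pinning down exactly which regularity of $p_t$ one is entitled to cite from Hartman–Wintner, and in particular whether joint continuity of $(t,x)\mapsto p_t(x)$ on $[t_0,1]\times\RR$ is available off the shelf or needs a short argument; I expect the cleanest route is to establish the uniform bound $\sup_{t\in[t_0,1]}\int_\RR|\widehat{X_t}(u)|\,\ud u<\infty$ (which follows from $|\widehat{X_t}(u)|\le|\widehat{X_{t_0}}(u)|^{t/t_0}$ and the super-polynomial-enough decay of $\widehat{X_{t_0}}$) and then deduce equicontinuity of the family $\{p_t\}_{t\in[t_0,1]}$ directly, after which the positivity assertions are elementary. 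A secondary point to state carefully is that the Hartman–Wintner condition as written, with $\nu$ restricted only by $\int(x^2\wedge 1)\,\nu(\ud x)<\infty$, does control the large-$|u|$ behaviour of $\int_\RR(1-\cos(ux))\,\nu(\ud x)$ as required; this is standard but worth a sentence.
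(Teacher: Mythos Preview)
Your treatment of \ref{A.bdd} and \ref{A.pos} matches the paper's. For \ref{A.pos2} you take a genuinely different route: you argue joint continuity of $(t,x)\mapsto p_t(x)$ on $[t_0,1]\times\RR$ via a uniform $L_1$ bound on the characteristic functions and Fourier inversion, and then read off a product neighbourhood of $(1,x_0)$ on which $p_t(x)\ge c$. The paper instead fixes $t_0\in(1/(2\ell),1)$, invokes structural facts about infinitely divisible laws (support results from Sato and strict positivity on the interior of the support from Steutel--van Harn) to get $p_{t_0}\ge c$ on an interval $[K-2r,K+2r]$, and then transfers this to all $t\in[t_0,1]$ by the convolution identity $p_t(x)=\int p_{t_0}(x-y)\,\PP_{X_{t-t_0}}(\ud y)$ together with stochastic continuity, which gives $\PP(|X_{t-t_0}|\le r)\ge 1/2$. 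Your approach is analytically self-contained and avoids the two external ID-distribution theorems; the paper's approach avoids the uniform Fourier estimate and the joint-continuity step.

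Two points to clean up in your version. First, the claim that ``a scaling/semigroup argument then upgrades this to all $t>0$'' is not right: there is no scaling for a generic pure jump L\'evy process, and convolution only propagates existence of a bounded density forward in $t$, not backward. This does no damage here because you only ever need $t\in[t_0,1]$ with $t_0>1/(2\ell)$, which Hartman--Wintner gives directly. Second, in your \ref{A.pos2} step, joint continuity plus $p_1(x_0)>0$ yields $p_t(x)\ge c$ only on a neighbourhood of $(1,x_0)$, i.e.\ for $t\in[t_1,1]\times[a,b]$ with $t_1$ possibly larger than your initial $t_0$; your ``after possibly shrinking'' should be read as shrinking the $t$-interval too, and the reference to the compact set $[t_0,1]\times\{x_0\}$ is misleading since you have no a~priori positivity of $p_t(x_0)$ for $t<1$.
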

\begin{proof}
	By \cite[Section 13, statement II]{hartman-wintner}, $X_t$ has a bounded and 
	continuous density for all $t>\tfrac{1}{2\ell}$. The conditions \ref{A.bdd} and \ref{A.pos} follow immediately. Let us prove \ref{A.pos2}.
	Let $r>0$. Due to stochastic continuity of L\'evy processes, there is $t_0\in\left( \tfrac1{2\ell},1 \right)$ such that
	\[
		\PP(|X_{t-t_0}|\leq r)\geq 1/2\quad \text{ for all } t\in[t_0,1].
	\]
	Since $\ell>1/2$, \cite[Theorem 24.10]{sato} implies that either 
	 the support of $X_{t_0}$ is a half line $[\kappa,\infty)$ (or $(-\infty,\kappa]$) for some $\kappa \in\RR$, or 
	 the support of $X_s$ is $\RR$ for all $s>0$. 
	 The continuous density $p_{t_0}$, if supported on a half line, is strictly 
	 positive on the open half line $(\kappa,\infty)$ (or $(-\infty,\kappa)$) by  \cite[Chapter IV, Theorem 8.6]{steutel-vanharn}.
	 If $X_s$ has a bounded and continuous density supported on the whole real line
	 for $\tfrac1{2\ell} < s < t_0$, then \cite[Chapter IV, Theorem 8.6]{steutel-vanharn} implies that $p_{t_0}$ is strictly positive.
	In any case $p_{t_0}$ is continuous and strictly positive on at least a half line, so that we find $K\in\RR$ and $c >0$ such that $p_{t_0}(x) \geq c$ for all $x\in[K-2r,K+2r]$. 
	For any $x\in[K-r,K+r]$ and $t\in[t_0,1]$ it holds that
	\begin{align*}
		p_t(x)& = \int_\RR p_{t_0}(x-y) \PP_{X_{t-t_0}}(\ud y)
		       \geq \int_{[-r,r]} p_{t_0}(x-y) \PP_{X_{t-t_0}}(\ud y)\\
		      & \geq c\PP(|X_{t-t_0}|\leq r) \geq c/2.
	\end{align*}
\end{proof}


\section{H\"older continuous functions and Malliavin smoothness} 
\label{section:Holder}


For $\alpha\in(0,1]$, the spaces $B(\RR)$,  $C^\alpha$  and $C^\alpha_b$ are spaces of Borel measurable functions $f$ such that 
\[
	\|f\|_\infty = \sup_{x\in\RR} |f(x)|,  \quad  \|f\|_{C^\alpha} =  \sup_{x\neq y} \tfrac{ |f(x) - f(y)| }{ |x-y|^{\alpha} } \quad \text{or} \quad 
	\|f\|_{C_b^{\alpha}} = \|f\|_\infty + \|f\|_{C^\alpha}, 
\]
respectively, is finite. We frequently use the notation $Lip := C^1_b$.  Note that $(B(\RR),\|\cdot\|_\infty)$ and $(C_b^\alpha,\|\cdot\|_{C_b^\alpha})$ are 
Banach spaces and $\|\cdot\|_{C^\alpha}$ is a seminorm.
Recall the notation
$$m_{2\alpha} = \int_\RR \left( |x|^{2\alpha}\wedge 1 \right) \nu(\ud x).$$


\subsection{Smoothness of first order}

\begin{thm}\label{th:holder_functions}
	Let $\alpha \in (0, 1)$  and $A:=[0,1]\times \{x:|x|>1\}$ and assume that $f(X_1)\in L_2(\PP)$. 
	\begin{enumerate}
		\item[(a)] 
			If  $f \in C^\alpha$ and
			$\int_\RR |x|^{2\alpha} \nu(\ud x) < \infty$, then $f(X_1)\in\DD$ and
			\[
				\|f(X_1)\|^2_{\DD} \leq  \| f(X_1) \|^2_{L_2(\PP)} + \|f\|^2_{C^\alpha}\int_\RR |x|^{2\alpha} \nu(\ud x) .
			\]
		\item[(b)] 
			If $f\in C^\alpha$, $m_{2\alpha} < \infty$ and $\EE\left[ f^2(X_1) N(A) \right]<\infty$,
			then $f(X_1)\in\DD$ and
			\begin{align*}
				& \|f(X_1)\|^2_{\DD} \\ 
				& \leq \|f\|_{C^\alpha}^2m_{2\alpha} + \EE\left[ f^2(X_1) N(A) \right] + \|f(X_1)\|^2_{L_2(\PP)}(1+\nu(\{|x|>1\})).
			\end{align*}
		\item[(c)] 
			If $f\in C_b^\alpha$ and $m_{2\alpha} < \infty $, then $f(X_1)\in\DD$ and
			\begin{equation} \label{eq:D12Calphanorms}
				\|f(X_1)\|^2_{\DD} \leq  \left(1 + 4 m_{2\alpha}\right)\|f\|^2_{C^\alpha_b}.       
			\end{equation}
		\item[(d)] 
			Assume that \ref{A.pos} holds and choose $\ell\in\{0,1,2,\ldots\}$ such that there exist $k\in\mathbbm{Z}$ and $c>0$ with 
			$p_1(x)\geq c$ for all $x\in\left[ k2^{-\ell}, (k+1)2^{-\ell} \right]$. Then for the function  
			$\displaystyle g^{\alpha,\ell}(x) = \sum_{n=\ell}^\infty 2^{-\alpha n}d(2^nx,\mathbbm{Z})$
			from Lemma \ref{lem:ciesielski_function} it holds that $g^{\alpha,\ell}\in C^\alpha_b$, and 
			\[
				g^{\alpha,\ell}(X_1)\in\DD \quad \text{only if}\quad m_{2\alpha} < \infty.
			\]
	\end{enumerate}
\end{thm}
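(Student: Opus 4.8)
The plan is to reduce everything to the Sol\'e--Utzet--Vives isometry \eqref{eq:quotient_in_L_2}: whenever $f(X_1)\in L_2(\PP)$, one has $f(X_1)\in\DD$ if and only if $J(f):=\int_\RR\EE[(f(X_1+x)-f(X_1))^2]\,\nu(\ud x)<\infty$, and then $\|f(X_1)\|_\DD^2=\|f(X_1)\|_{L_2(\PP)}^2+J(f)$; so parts (a)--(c) become upper bounds for $J(f)$ and part (d) a lower bound. For (a) I would use the pointwise bound $(f(X_1+x)-f(X_1))^2\le\|f\|_{C^\alpha}^2|x|^{2\alpha}$ and integrate against $\nu$. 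For (c), combining it with $(f(X_1+x)-f(X_1))^2\le4\|f\|_\infty^2$ gives $(f(X_1+x)-f(X_1))^2\le4\|f\|_{C^\alpha_b}^2(|x|^{2\alpha}\wedge1)$, hence $J(f)\le4m_{2\alpha}\|f\|_{C^\alpha_b}^2$, and $\|f(X_1)\|_{L_2(\PP)}^2\le\|f\|_{C^\alpha_b}^2$ yields \eqref{eq:D12Calphanorms}. For (b) I would split $\nu$ at $|x|=1$: the small jumps are handled by the H\"older bound together with $\int_{|x|\le1}|x|^{2\alpha}\,\nu(\ud x)\le m_{2\alpha}$, and for the large jumps I would use $(f(X_1+x)-f(X_1))^2\le2f^2(X_1+x)+2f^2(X_1)$ together with the Mecke--Palm identity $\int_{|x|>1}\EE[f^2(X_1+x)]\,\nu(\ud x)=\EE[f^2(X_1)N(A)]$ (this is the role of $A$) and $\int_{|x|>1}\EE[f^2(X_1)]\,\nu(\ud x)=\|f(X_1)\|_{L_2(\PP)}^2\,\nu(\{|x|>1\})$, collecting the terms into the stated bound.

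For (d), the substantive part, write $\phi:=d(\,\cdot\,,\ZZ)$, so that $g^{\alpha,\ell}=\sum_{n\ge\ell}2^{-\alpha n}\phi(2^n\,\cdot\,)$; since each summand has period $2^{-n}$ with $n\ge\ell$, the function $g^{\alpha,\ell}$ has period $2^{-\ell}$, and it lies in $C^\alpha_b$ by Lemma \ref{lem:ciesielski_function}, hence $g^{\alpha,\ell}(X_1)\in L_2(\PP)$. By the isometry it suffices to show that $m_{2\alpha}=\infty$ implies $J(g^{\alpha,\ell})=\infty$. I would use \ref{A.pos} in the form $p_1\ge c$ on the \emph{full period} $[k2^{-\ell},(k+1)2^{-\ell}]$, so that for every $x\neq0$
\[
	\EE\big[(g^{\alpha,\ell}(X_1+x)-g^{\alpha,\ell}(X_1))^2\big] \ge c\int_{k2^{-\ell}}^{(k+1)2^{-\ell}}\big(g^{\alpha,\ell}(y+x)-g^{\alpha,\ell}(y)\big)^2\,\ud y,
\]
and then rescale $y=2^{-\ell}u$: since $g^{\alpha,\ell}(2^{-\ell}u)=2^{-\alpha\ell}h(u)$ with $h:=\sum_{m\ge0}2^{-\alpha m}\phi(2^m\,\cdot\,)$ of period $1$, the right-hand integral equals $2^{-\ell(1+2\alpha)}\int_0^1(h(u+s)-h(u))^2\,\ud u$ with $s:=2^\ell x$.

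The required deterministic lower bound I would obtain from Fourier analysis of $h$. From $\phi(y)=\tfrac14-\pi^{-2}\sum_{k\text{ odd}}k^{-2}\cos(2\pi k y)$ the complex Fourier coefficient of $h$ at a positive frequency $\mu$ is $-\pi^{-2}2^{-\alpha m}k^{-2}$, where $\mu=k2^m$ with $k$ odd --- a factorization that is \emph{unique}, so different levels never cancel, and in particular $|\widehat h_{2^n}|=\pi^{-2}2^{-\alpha n}$ for all $n\ge0$. Parseval then gives, for every $n\ge0$,
\[
	\int_0^1\big(h(u+s)-h(u)\big)^2\,\ud u=\sum_{\mu\in\ZZ}|\widehat h_\mu|^2\,4\sin^2(\pi\mu s)\ge 4\pi^{-4}2^{-2\alpha n}\sin^2(\pi 2^n s).
\]
For $0<|s|<\tfrac12$ I would choose $n\ge0$ with $2^n|s|\in[\tfrac14,\tfrac12)$; then $\sin^2(\pi 2^n s)\ge\tfrac12$ and $2^{-2\alpha n}>(2|s|)^{2\alpha}$, so $\int_0^1(h(u+s)-h(u))^2\,\ud u\ge c'|s|^{2\alpha}$ with $c'>0$ depending only on $\alpha$. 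Unwinding the two substitutions, $\EE[(g^{\alpha,\ell}(X_1+x)-g^{\alpha,\ell}(X_1))^2]\ge c\,c'\,2^{-\ell}|x|^{2\alpha}$ for $0<|x|<2^{-\ell-1}$, whence $J(g^{\alpha,\ell})\ge c\,c'\,2^{-\ell}\int_{0<|x|<2^{-\ell-1}}|x|^{2\alpha}\,\nu(\ud x)$. Since $\nu$ is finite on $\{|x|>2^{-\ell-1}\}$, the hypothesis $m_{2\alpha}=\infty$ forces $\int_{0<|x|<2^{-\ell-1}}|x|^{2\alpha}\,\nu(\ud x)=\infty$, so $J(g^{\alpha,\ell})=\infty$ and $g^{\alpha,\ell}(X_1)\notin\DD$; this is the contrapositive of (d).

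The main obstacle is precisely this lower bound: it requires a single $C^\alpha_b$ function whose increments, averaged in $y$ against a density bounded below, stay of exact order $|x|^{2\alpha}$ uniformly over all small $x$. The two devices that make the argument go through cleanly are (i) choosing $\ell$ so that the interval on which $p_1\ge c$ is a full period of $g^{\alpha,\ell}$, which turns the density lower bound into an exact $L_2$-estimate over one period and lets Parseval apply with no boundary terms, and (ii) the lacunary structure of $g^{\alpha,\ell}$, where the frequency $2^n$ is produced by the single term $2^{-\alpha n}\phi(2^n\,\cdot\,)$, so that Parseval isolates a contribution of the exact size $2^{-2\alpha n}$. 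The remaining ingredients --- the Fourier series of the triangle wave, the uniqueness of $\mu=k2^m$, and the L\'evy-measure estimate $\int_{|x|>1}\nu(\ud x)<\infty$ --- are routine bookkeeping.
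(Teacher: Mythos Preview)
Your proofs of (a) and (c) coincide with the paper's. For (d), your Parseval argument is a correct and genuinely different alternative to the paper's direct computation in Lemma~\ref{lem:ciesielski_function}. The paper establishes the cross-term orthogonality
\[
	\int_0^{2^{-n}}\big[g_n(y+x)-g_n(y)\big]\big[g_m(y+x)-g_m(y)\big]\,\ud y=0\qquad(m>n)
\]
by a shift-and-sign argument and then bounds a single diagonal term $\int_0^{2^{-m}}[g_m(y+x)-g_m(y)]^2\,\ud y$ below by hand. Your observation that each nonzero frequency $\mu=k2^m$ of $h$ is fed by exactly one level is the same orthogonality, and isolating the mode $\mu=2^n$ plays the role of the paper's single diagonal term. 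Both routes deliver a lower bound of order $|x|^{2\alpha}$ on a neighbourhood of $0$; the paper's gives the explicit constant $2^{-\ell}2^{8\alpha-10}$, yours gives a different but equally explicit one, and either suffices.

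For (b), however, your approach proves $f(X_1)\in\DD$ but does \emph{not} yield the stated inequality. The bound $(f(X_1+x)-f(X_1))^2\le2f^2(X_1+x)+2f^2(X_1)$ combined with the Mecke identity gives
\[
	\int_{\{|x|>1\}}\EE\big[(f(X_1+x)-f(X_1))^2\big]\,\nu(\ud x)\;\le\;2\,\EE\!\left[f^2(X_1)N(A)\right]+2\,\|f(X_1)\|_{L_2(\PP)}^2\,\nu(\{|x|>1\}),
\]
so ``collecting the terms'' leaves you a factor of $2$ off on both of the last two terms of the claimed bound. The paper obtains the sharper constants by a different mechanism: it works at the level of the chaos kernels and proves the splitting
\[
	\sum_{n\ge1}n\,n!\,\|\tilde f_n\|_{L_2(\mm^{\otimes n})}^2
	=\int_{[-1,1]}\EE\big[(f(X_1+x)-f(X_1))^2\big]\,\nu(\ud x)
	+\sum_{n\ge1}n\,n!\,\big\|\tilde f_n\one_{(\RR_+\times\RR)^{n-1}\times A}\big\|_{L_2(\mm^{\otimes n})}^2
\]
via truncation $\varphi_k=(-k)\vee(f\wedge k)\in C_b^\alpha$ (so that part (c) applies to $\varphi_k$) and an $L_2(\mm\otimes\PP)$-limit, and then bounds the second sum by $\EE[f^2(X_1)N(A)]+\|f(X_1)\|_{L_2(\PP)}^2\,\nu(\{|x|>1\})$ using \cite[Proposition~3.4]{laukkarinenCPP}. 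That chaos-level splitting, not the pointwise inequality $(a-b)^2\le2a^2+2b^2$, is what removes the factor $2$; if you want the displayed bound rather than just membership in $\DD$, you need it.
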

\begin{proof} 
	(a) The claim follows from \cite[Proposition 5.4]{sole-utzet-vives} (see \eqref{eq:quotient_in_L_2}) and the $\alpha$-H\"older continuity.
	
	(c) The claim follows from $\|f(X_1)\|_{L_2(\PP)}^2 \leq \|f\|_{C_b^{\alpha}}^2$ and \eqref{eq:quotient_in_L_2}, since 
	\begin{align*}
		& \int_\RR  \EE \left[ \left| f(X_1+x) - f(X_1) \right|^2 \right] \nu(\ud x)  \\
		&\leq \int_{\{|x|\leq 1\}} \|f\|^2_{C^\alpha} |x|^{2\alpha} \nu(\ud x) 
		+ \int_{\{|x|>1\}} 4 \|f\|_\infty^2 \nu(\ud x) \\
		& \leq \|f\|_{C_b^{\alpha}}^2 \cdot 4 \int_\RR \left( |x|^{2\alpha} \wedge 1 \right) \nu(\ud x).
	\end{align*}
	
	(b) 
	Consider the chaos expansion $f(X_1) = \sum_{n=0}^\infty I_n(f_n)$ and recall that
	\[
		\|f(X_1)\|^2_{\DD} = \|f(X_1)\|^2_{L_2(\PP)} + \sum_{n=1}^\infty n n! \left\| \tilde{f_n} \right\|^2_{L_2(\mm^{\otimes n})}.
	\] 
	We show first that
	\begin{align}\label{eq:chaos_split}\sum_{n=1}^\infty n n! \left\| \tilde{f_n} \right\|^2_{L_2(\mm^{\otimes n})}   
		& = \int_{[-1,1]}\EE\left[ |f(X_1+x)-f(X_1)|^2 \right] \nu(\ud x) \nonumber\\
		&\quad + \sum_{n=1}^\infty n n! \left\| \tilde{f_n} \one_{(\RR_+\times\RR)^{\times(n-1)}\times A}\right\|^2_{L_2(\mm^{\otimes n})}.
	\end{align}
	In fact, it holds that
        \begin{align}\label{align:C_alpha_bound}
		& \int_{\RR_+\times\RR\setminus\{0\}} \EE \left[ \left| \frac{f(X_1+x) - f(X_1)}{x}\one_{[0,1]\times\{0<|x|\leq 1\}}(t,x) \right|^2  \right] 
		  \mm(\ud t, \ud x)\nonumber \\*
		& = \int_{[-1,1]}\EE\left[ |f(X_1+x)-f(X_1)|^2 \right] \nu(\ud x)
		 \leq \|f\|^2_{C^\alpha} \int_{[-1,1]} |x|^{2\alpha} \nu(\ud x)<\infty,
		\end{align}  so that there is a chaos representation
	\[
		\frac{f(X_1+x) - f(X_1)}{x}\one_{[0,1]\times\{0<|x|\leq 1\}}(t,x) = \sum_{n=0}^\infty I_n(h_{n+1}(\cdot,(t,x))) \quad\text{in }L_2(\mm\otimes\PP)
	\]
	where $h_{n+1}\in L_2(\mm^{\otimes (n+1)}) $ is symmetric in the first $n$ pairs of variables (see \cite[Lemma 1.3.1]{nualart1995} or \cite[Section 4]{nualart-vives}).
	Let $\varphi_k = -k \vee (f \wedge k)$ so that $\varphi_k \in C_b^\alpha$ and
	$\varphi_k(X_1) \in \DD$ by (c). 
	Consider the chaos expansion $\varphi_k(X_1) = \sum_{n=0}^\infty I_n(f_n^{(k)})$. Then $\tilde{f}_n^{(k)} \to \tilde{f}_n$ in $L_2(\mm^{\otimes n})$, 
	since $\varphi_k(X_1) \to f(X_1)$ in $L_2(\PP)$. 
	It also holds that
	\[
		\int_{[0,1]\times\{0<|x|\leq1\}} \EE  \left[ \left| \frac{\varphi_k(X_1 + x) - \varphi_k(X_1)}{x} - 
		\frac{ f(X_1+x) - f(X_1) }{x}  \right|^2  \right] \mm(\ud t,\ud x) 
	\]
	converges to $0$ as $k\to \infty$ by dominated convergence, since $|\varphi_k(X_1 + x) - \varphi_k(X_1)| \leq |f(X_1+x) - f(X_1)|$.  
	From \eqref{eq:quotient} we have that
	\[
		\frac{\varphi_k(X_1+x)\!-\!\varphi_k(X_1)}{x}\one_{[0,1]\times\RR\setminus\{0\}}(t,x) = D_{t,x}\varphi_k(X_1) = \!\sum_{n=1}^\infty n I_{n-1}(\tilde{f}_n^{(k)}(\cdot,(t,x)),
	\] 
	in $L_2(\mm\otimes\PP)$, which gives
	\begin{align*}
		h_n 
		& = \lim_{k\to\infty}n\tilde{f}_n^{(k)}\one_{(\RR_+\times\RR)^{\times(n-1)}\times([0,1]\times\{0<|x|\leq1\})} \\
		& = n\tilde{f}_n\one_{(\RR_+\times\RR)^{\times(n-1)}\times([0,1]\times\{0<|x|\leq 1\})}
	\end{align*}
        in $L_2(\mm^{\otimes n})$ for $n=1,2,\ldots$ Therefore
	\begin{align*}
		& \frac{f(X_1+x) - f(X_1)}{x}\one_{[0,1]\times\{0<|x|\leq 1\}}(t,x) \\
		& = \sum_{n=1}^\infty nI_{n-1}(\tilde{f}_n(\cdot,(t,x))\one_{[0,1]\times\{0<|x|\leq 1\}}(t,x)) 
	\end{align*}
	in $L_2(\mm\otimes\PP)$. This together with Lemma \ref{lemma:chaos_martingale}(a) proves equation \eqref{eq:chaos_split}.
	For the second term on the right hand side of \eqref{eq:chaos_split} we have by \cite[Proposition 3.4]{laukkarinenCPP} that
	\[
		\sum_{n=1}^\infty n n! \left\| \tilde{f_n} \one_{(\RR_+\times\RR)^{\times(n-1)}\times A}\right\|^2_{L_2(\mm^{\otimes n})}
	        \leq \EE\left[ f^2(X_1) N(A) \right] + \EE[f^2(X_1)]\EE[N(A)].
	\]
	Thus, from  \eqref{eq:chaos_split}, \eqref{align:C_alpha_bound} and the above inequality  we 
	get that 
	\begin{align*}
		 \sum_{n=1}^\infty n n! \left\| \tilde{f_n} \right\|^2_{L_2(\mm^{\otimes n})}   
		 \leq \|f\|^2_{C^\alpha} m_{2\alpha} 
		+ \EE\left[ f^2(X_1) N(A) \right] + \EE[f^2(X_1)]\EE[N(A)].
	\end{align*}
	Noting that $\EE[N(A)]=\nu(\{|x|>1\})$, we obtain the claim.
		
	(d)  We have $g^{\alpha,\ell}\in C_b^\alpha$ by Lemma \ref{lem:ciesielski_function} below. If $g^{\alpha,\ell}(X_1)\in\DD$, then by 
	\eqref{eq:quotient_in_L_2} and Lemma \ref{lem:ciesielski_function} it holds that
	\begin{align*}
		  \infty 
		& > \int_\RR \EE\left[ \big( g^{\alpha,\ell}(X_1+x)-g^{\alpha,\ell}(X_1) \big)^2 \right] \nu(\ud x)\\*
		& \geq \int_{|x|\leq 2^{-\ell-3}} \left[ c\int_{k2^{-\ell}}^{(k+1)2^{-\ell}}  \left(g(y+x)-g(y)\right)^2  \ud y \right]\nu(\ud x) \\
		& \geq c2^{-\ell}2^{8\alpha-10} \int_{|x|\leq 2^{-\ell-3}} |x|^{2\alpha}\nu(\ud x).
		     \end{align*}
	Hence it must be $m_{2\alpha}<\infty$.
\end{proof}

The idea for the construction of the function $g^{\alpha,\ell}$ below is based on the decomposition of Ciesielski \cite{ciesielski}. 

\begin{lem}\label{lem:ciesielski_function}
	Let $\ell\in\{0,1,2\ldots\}$ and $\displaystyle g^{\alpha,\ell}(x) = \sum_{n=\ell}^\infty 2^{-\alpha n} g_n(x)$, where 
	\[
		g_n(x)= d(2^nx,\mathbbm{Z}) = \inf\{|2^nx - z|:z\in\mathbbm{Z}\}.
	\] 
	Then $g^{\alpha,\ell}\in C_b^\alpha$, and for all $k\in\mathbbm{Z}$ and $|x|\leq 2^{-\ell-3}$ it holds that
	\[
		\int_{k2^{-\ell}}^{(k+1)2^{-\ell}} \left[ g^{\alpha,\ell}(y+x)-g^{\alpha,\ell}(y) \right]^2 \ud y \geq 2^{-\ell}2^{8\alpha-10}|x|^{2\alpha}.
	\]
\end{lem}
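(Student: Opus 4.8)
The plan is to prove the two assertions of the lemma separately.

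\emph{Regularity.} That $g^{\alpha,\ell}\in B(\RR)$ is immediate from $0\le d(\cdot,\ZZ)\le\tfrac12$: $\|g^{\alpha,\ell}\|_\infty\le\tfrac12\sum_{n\ge\ell}2^{-\alpha n}<\infty$. For the H\"older seminorm I would use $|g_n(x)-g_n(y)|=|d(2^nx,\ZZ)-d(2^ny,\ZZ)|\le\min(1,2^n|x-y|)$, pick $N\ge\ell$ with $2^{-N}\le|x-y|<2^{-N+1}$, and split $|g^{\alpha,\ell}(x)-g^{\alpha,\ell}(y)|\le\sum_{n=\ell}^{N}2^{-\alpha n}2^n|x-y|+\sum_{n=N+1}^\infty 2^{-\alpha n}$; summing the two geometric series bounds this by a constant times $|x-y|^\alpha$, so $g^{\alpha,\ell}\in C^\alpha_b$.

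\emph{Lower bound.} For $x=0$ there is nothing to prove, and by the evenness of each $d(2^n\cdot,\ZZ)$, hence of $g^{\alpha,\ell}$, together with the substitution $y\mapsto-y$ (which sends the interval indexed by $k$ to the one indexed by $-k-1$), we may assume $x>0$. Fix the resonant level $m\ge\ell$ by $2^{-m-3}\le x<2^{-m-2}$ and put $t:=2^mx\in[\tfrac18,\tfrac14)$, and write $J:=[k2^{-\ell},(k+1)2^{-\ell}]$ as the disjoint union of its $2^{m-\ell}$ dyadic subintervals $I$ of length $2^{-m}$. On each $I$, substituting $u=2^my-(\text{left endpoint index of }I)\in[0,1]$ and using periodicity of $d(\cdot,\ZZ)$, one gets $g_m(y+x)-g_m(y)=d(u+t,\ZZ)-d(u,\ZZ)=:h_t(u)$, the explicit trapezoidal wave equal to $t$ on $[0,\tfrac12-t]$, linear from $t$ down to $-t$ on $[\tfrac12-t,\tfrac12]$, equal to $-t$ on $[\tfrac12,1-t]$, and linear from $-t$ up to $t$ on $[1-t,1]$; in particular $\int_0^1 h_t=0$ and $\int_0^1 h_t^2\ge t^2(1-2t)\ge\tfrac12 t^2$. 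Summing over the $2^{m-\ell}$ subintervals gives $\big\|g_m(\cdot+x)-g_m\big\|_{L_2(J)}^2=2^{-\ell}\int_0^1 h_t^2\ge 2^{-\ell-1}t^2$, so the level-$m$ summand $P:=2^{-\alpha m}(g_m(\cdot+x)-g_m)$ of $G(\cdot+x)-G$, $G:=g^{\alpha,\ell}$, obeys $\|P\|_{L_2(J)}^2\ge 2^{-\ell-1}2^{-2\alpha m}t^2=2^{-\ell-1}2^{2(1-\alpha)m}x^2\ge 2\cdot 2^{-\ell}2^{8\alpha-10}|x|^{2\alpha}$, using $t\ge\tfrac18$ and $\alpha<1$.

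It remains to control the other levels. Write $G(\cdot+x)-G=P+Q$ with $Q:=\sum_{n\ne m}2^{-\alpha n}(g_n(\cdot+x)-g_n)$; it suffices to show $\big|\int_J PQ\big|\le\tfrac14\|P\|_{L_2(J)}^2$, since then $\int_J(P+Q)^2\ge\|P\|_{L_2(J)}^2+2\int_J PQ\ge\tfrac12\|P\|_{L_2(J)}^2\ge 2^{-\ell}2^{8\alpha-10}|x|^{2\alpha}$. I would estimate $\int_J PQ=\sum_{n\ne m}2^{-\alpha(m+n)}\int_J(g_m(\cdot+x)-g_m)(g_n(\cdot+x)-g_n)$ by splitting into coarse levels $n<m$ and fine levels $n>m$ and exploiting the exact piecewise-linear shapes of the increments: for $n<m$, on each dyadic subinterval $I$ the function $g_n(\cdot+x)-g_n$ is piecewise constant with values $\pm 2^nx$ and a single transition of length $x$, and only $O(2^{n-\ell})$ of the $2^{m-\ell}$ subintervals of $J$ carry such a transition (on the others it is constant, hence orthogonal to the mean-zero $h_t$); for $n>m$, $g_n(\cdot+x)-g_n$ has period $2^{-n}$ with vanishing mean over each period, and pairing it against $g_m(\cdot+x)-g_m$, which is piecewise linear with slope $\pm 2^m$ away from its $O(1)$ corners per period, produces a bound decaying geometrically in $n-m$. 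Summing these contributions, weighted by $2^{-\alpha(m+n)}$, should give $\big|\int_J PQ\big|\le c\,2^{-\ell}2^{-2\alpha m}t^2$ with $c\le\tfrac18$.

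The step I expect to be the main obstacle is exactly this last accumulation. The coefficients $2^{-\alpha n}$ decay only geometrically (with ratios $2^{-\alpha}$ on the fine side, $2^{1-\alpha}$ on the coarse side), so the neighbouring levels $n=m\pm1,m\pm2,\dots$ contribute on the same order as level $m$; a crude Cauchy--Schwarz bound on the cross terms is never small enough and would produce a constant blowing up as $\alpha\uparrow 1$ or $\alpha\downarrow 0$. One genuinely has to use the mean-zero cancellations and the fact that at scale $n<m$ the ``bad'' transitions occupy only a length $x$ in only $O(2^{n-\ell})$ of the blocks, to bring the total cross contribution below $\tfrac14\|P\|_{L_2(J)}^2$ uniformly in $\alpha\in(0,1)$; once that is done, the explicit constant $2^{8\alpha-10}$ comes out of tracking the inequalities above (the bounds $2^{-m-3}\le x$, $t\ge\tfrac18$, $1-2t\ge\tfrac12$).
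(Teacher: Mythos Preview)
Your regularity argument is essentially the paper's. Your lower-bound strategy identifies the correct resonant level $m$ and your estimate $\|P\|_{L_2(J)}^2\ge 2\cdot 2^{-\ell}2^{8\alpha-10}|x|^{2\alpha}$ is fine. The gap is in the cross-term control, which you flag yourself as the main obstacle and do not complete.

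What you are missing is that the cross terms are not merely small: they vanish \emph{identically}. For any $n'<n$ with $n',n\ge\ell$, the identity $g_{n'}(y+2^{-n'-1})=\tfrac12-g_{n'}(y)$ gives
\[
  g_{n'}(y+2^{-n'-1}+x)-g_{n'}(y+2^{-n'-1})=-\bigl(g_{n'}(y+x)-g_{n'}(y)\bigr),
\]
while $g_{n}(\cdot+x)-g_{n}(\cdot)$ has period $2^{-n}$, which divides $2^{-n'-1}$. Splitting $\int_0^{2^{-n'}}$ into its two halves and shifting by $2^{-n'-1}$ in the second therefore yields
\[
  \int_0^{2^{-n'}}\bigl[g_{n'}(y+x)-g_{n'}(y)\bigr]\bigl[g_{n}(y+x)-g_{n}(y)\bigr]\,\ud y=0.
\]
Since $J=[k2^{-\ell},(k+1)2^{-\ell}]$ is a disjoint union of intervals of length $2^{-n'}$ whenever $n'\ge\ell$, the same vanishing holds over $J$. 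Hence $\int_J PQ=0$ and, more generally, expanding the square gives only the diagonal sum
\[
  \int_J\bigl[g^{\alpha,\ell}(y+x)-g^{\alpha,\ell}(y)\bigr]^2\ud y
  =\sum_{n\ge\ell}2^{-2\alpha n}\bigl\|g_n(\cdot+x)-g_n\bigr\|_{L_2(J)}^2
  \ge \|P\|_{L_2(J)}^2,
\]
and your computation of $\|P\|_{L_2(J)}^2$ then finishes the proof directly. There is no need for the transition-counting and mean-zero pairing estimates you sketch, and no constant to tune; the ``main obstacle'' disappears once you observe this anti-symmetry/periodicity orthogonality, which is precisely the mechanism the paper uses.
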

\begin{proof} 
	Since $|g_n(x)|\leq 1/2$ for all $x\in\RR$, it is clear that $\|g^{\alpha,\ell}\|_\infty < \infty$. 
	Since we also have that $|g_n(x)-g_n(y)|\leq 2^n|x-y|$ for all $x,y\in\RR$, we get for any $m\geq \ell$ and $2^{-m-1}\leq|x-y|\leq 2^{-m}$, that
	\begin{align*}
		|g^{\alpha,\ell}(x)-g^{\alpha,\ell}(y)|
		& \leq \sum_{n=\ell}^\infty 2^{-\alpha n}|g_n(x)-g_n(y)| \\
		& \leq \sum_{n=0}^m 2^{-\alpha n}2^n 2^{-m} + \sum_{n=m+1}^\infty 2^{-\alpha n}\\
		& \leq \frac{2(2^{- m -1})^\alpha}{2^{1-\alpha}-1} +  \frac{(2^{-m-1})^\alpha}{1-2^{-\alpha}}\\
		& \leq \left( \frac{1}{(2^{1-\alpha}-1)(1-2^{-\alpha})}\right)|x-y|^\alpha.
	\end{align*}
	Thus $g^{\alpha,\ell}\in C_b^\alpha$.
	
	The function $g_m$ is periodic with period length $2^{-n}$ for all $m\geq n$, so that via dominated convergence we get that
	 \begin{align*}
		&\int_{k2^{-\ell}}^{(k+1)2^{-\ell}} \left[ g^{\alpha,\ell}(y+x)-g^{\alpha,\ell}(y) \right]^2 \ud y\\
		& = \sum_{n=\ell}^\infty 2^{n-\ell-2\alpha n}\int_0^{2^{-n}} 
		        \left[ g_n(y+x)- g_n(y)\right]^2 \ud y  \\
		& \qquad + 2\sum_{m>n\geq \ell}2^{n-\ell-\alpha(n+m)} \int_0^{2^{-n}}  \left[ g_n(y+x)- g_n(y)  \right] \left[ g_m(y+x)- g_m(y)\right]  \ud y.                                                                                                                                        
	\end{align*}
	Let $m>n\geq \ell$.  Since $g_m$ is periodic with period length $2^{-n-1}$ 
	and
	\[
		g_n(y+x)- g_n(y) = - \left( g_n(y+2^{-n-1}+x)- g_n(y+2^{-n-1}) \right)
	\] 
	for all $x,y \in\RR$, we have that
	\begin{align*}
		& \int_0^{2^{-n}}  \left[g_n(y+x)- g_n(y)  \right] \left[g_m(y+x)- g_m(y)\right] \ud y                    \\*
		& =  \int_0^{2^{-n-1}} \left[ g_n(y+x)- g_n(y)  \right] \left[ g_m(y+x)- g_m(y) \right]  \ud y              \\
		& \quad  + \int_{2^{-n-1}}^{2^{-n}} \left[ g_n(y+x)- g_n(y)  \right] \left[ g_m(y+x)- g_m(y) \right]  \ud y \\
		& = 0.                                                                                                                                
	\end{align*}
	Let $0< |x| \leq 2^{-\ell-3}$ and $m\geq\ell$ such that  $2^{-m-4}< |x| \leq 2^{-m-3}$. Since $|g_{m}(y+x)-g_m(y)| =2^m|x|$ when both
	$y+x\in \left(0,2^{-m-1}\right)$ and $y\in \left(0,2^{-m-1}\right)$, we obtain that
	\[
		    \int_0^{2^{-m}}  \left[ g_{m}(y+x)-g_{m}(y) \right]^2 \ud y
		 \geq \int_{2^{-m-3}}^{3\cdot2^{-m-3}}   \left[ 2^{m}|x| \right]^2 \ud y 
		 = 2^{m-2}x^2.
	\]
	Since $2^{m-2}x^2 \geq  2^{m-2}(2^{-m-4})^{2-2\alpha} |x|^{2\alpha} = 2^{-m+2\alpha m +8\alpha-10}|x|^{2\alpha}$, we get	
	\begin{align*}
		  \sum_{n=\ell}^ \infty 2^{n-\ell-2\alpha n}\int_0^{2^{-n}}  \left[ g_{n}(y+x)- g_{n}(y) \right]^2 \ud y     
		  &  \geq  2^{m-\ell-2\alpha m}2^{-m+2\alpha m +8\alpha-10}\\
		  & \geq 2^{-\ell}2^{8\alpha-10} |x|^{2\alpha}.
	\end{align*} 
\end{proof}

\begin{rmk}
        The function $g^{\alpha,\ell}$ in Theorem \ref{th:holder_functions}(d) and Lemma \ref{lem:ciesielski_function} is irregular on the whole real line.
        If a $C_b^\alpha$-function is "more smooth",
        then Theorem \ref{th:holder_functions}(d) does not necessarily give the best condition:  Take for example $f(x)=|x|^\alpha\wedge1$, which is $C_b^\alpha$ 
        but not $C_b^{\alpha'}$
        for any $\alpha'>\alpha$, and assume that \ref{A.bdd} holds. Then
	for $0<|x|\leq 1$ we have that
	\begin{align*}
		& \EE\left[ \left( |X_1+x|^\alpha\wedge1 - |X_1|^\alpha\wedge1  \right)^2 \right]\\
		& \leq \|p_1\|_\infty \int_{-2}^2 \left( |y+x|^\alpha - |y|^\alpha  \right)^2 \ud y\\
		& =  \|p_1\|_\infty|x|^{2\alpha+1} \int_{-\frac{2}{|x|}}^{\frac{2}{|x|}} \left( \left|z+\tfrac{x}{|x|}\right|^\alpha - \left| z\right|^\alpha  \right)^2 
		     \ud z\\
		&  \leq \|p_1\|_\infty |x|^{2\alpha+1} \left[ \int_{|z|<2} 1 \ud z + \alpha^2 \int_{2\leq |z| \leq \frac{2}{|x|}} (|z|-1)^{2\alpha-2} \ud z  \right]\\
		&  \leq \begin{cases}
		        \|p_1\|_\infty |x|^{2\alpha+1} \left[ 4 + \frac{2\alpha^2}{1-2\alpha}  \right], & \text{ for }\alpha < \frac12 \\
		        \|p_1\|_\infty |x|^2 \left[ 4 + 2\log \frac2{|x|}  \right], & \text{ for }\alpha = \frac12 \\
		         \|p_1\|_\infty |x|^{2\alpha+1} \left[ 4 + \frac{2^{2\alpha}\alpha^2}{2\alpha-1} |x|^{1-2\alpha}  \right], & \text{ for }\alpha > \frac12. 
		       \end{cases}
	\end{align*}
        Since $\EE\left[ \left( |X_1+x|^\alpha\wedge1 - |X_1|^\alpha\wedge1  \right)^2 \right] \leq 1$, we get from \eqref{eq:quotient_in_L_2} that 
        $|X_1|^\alpha\wedge1\in\DD$, if one of the following three conditions holds: 
        1. $0<\alpha<1/2$ and $m_{2\alpha+1}<\infty$, 2. $\alpha=1/2$ and $\int_{\{0<|x|\leq1\}} x^2\log (1/|x|) \nu(\ud x)<\infty$ or
        3. $\alpha>1/2$. Note that for the Brownian motion $B$ we have $|B_1|^\alpha\wedge1\in\DD$ if and only if $\alpha>1/2$. This can be easily seen using
        \cite[Example 1.2.8]{nualart1995}.
\end{rmk}


\subsection{Fractional smoothness}

To find fractional smoothness for $f(X_1)$ with $f\in C^\alpha_b$ in Corollary \ref{th:Holder_fractional} below, 
we take advantage of the fact that  $C^\alpha_b = (B(\RR), Lip)_{\alpha,\infty}$ with 
\begin{equation} 
	\label{eq:Holder_interpolation}
	\|\cdot\|_{C^\alpha_b} 
	\leq 3 \|\cdot\|_{(B(\RR), Lip)_{\alpha,\infty}} 
	\leq  6 \|\cdot\|_{C^\alpha_b}
\end{equation}
(see  Lemma \ref{lemma:holder_interpolationA} and also \cite[Theorem 2.7.2/1]{triebel} in a slightly different 
setting).

\begin{thm} \label{th:Holder_fractional}
		Let $0<\alpha \leq \theta < 1$.
		\begin{enumerate}
			\item[(a)] If $f\in C^\alpha_b$ and $m_{2\alpha/\theta} < \infty$, 
				then
				\[
					f(X_1) \in \left( L_2(\PP), \DD  \right)_{\theta, \infty}
				\]
				and
				\[
					\|f(X_1)\|_{\left( L_2(\PP), \DD  \right)_{\theta, \infty}}\leq  18\sqrt{ 1 + 4 m_{2\alpha/\theta} } 
					 \|f\|_{C^\alpha_b}.
				\]
			\item[(b)] Assume that \ref{A.pos2} holds and choose $t_0\in(0,1)$ and $\ell\in\{0,1,2,\ldots\}$ such that there exist 
				$k\in\mathbbm{Z}$ and $c>0$ with $p_t(x)\geq c$ 
				for all $t\in[t_0,1]$ and all $x\in[(k-1)2^{-\ell},(k+2)2^{-\ell}]$. 
				For the function $g^{\alpha,\ell}\in C_b^\alpha$ of Lemma \ref{lem:ciesielski_function} it holds that
				\[
					g^{\alpha,\ell}(X_1) \in \left( L_2(\PP), \DD  \right)_{\theta, \infty}\quad \text{only if} 
					\quad m_{2\alpha/\theta+\varepsilon}<\infty \text{ for all }\varepsilon >0.
				\] 
	\end{enumerate}
\end{thm}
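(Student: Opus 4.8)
The two parts call for unrelated methods. \emph{Part (a).} I would argue by operator interpolation applied to the linear map $T\colon g\mapsto g(X_1)$. Since $\|g(X_1)\|_{L_2(\PP)}\le\|g\|_\infty$, the map $T$ is bounded from $B(\RR)$ to $L_2(\PP)$ with norm at most $1$; and since $\alpha/\theta\in(0,1]$, Theorem \ref{th:holder_functions}(c) (whose estimate holds for every H\"older exponent in $(0,1]$, the exponent $1$ being immediate from \eqref{eq:quotient_in_L_2}) shows that $T$ is bounded from $C^{\alpha/\theta}_b$ to $\DD$ with norm at most $\sqrt{1+4m_{2\alpha/\theta}}$. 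Interpolating both couples with parameter $\theta$, $T$ maps $(B(\RR),C^{\alpha/\theta}_b)_{\theta,\infty}$ into $(L_2(\PP),\DD)_{\theta,\infty}$ with norm at most $(1+4m_{2\alpha/\theta})^{\theta/2}\le\sqrt{1+4m_{2\alpha/\theta}}$. It then remains to identify $(B(\RR),C^{\alpha/\theta}_b)_{\theta,\infty}$ with $C^\alpha_b$ and to track the constant: by \eqref{eq:Holder_interpolation} one replaces $C^{\alpha/\theta}_b$ by $(B(\RR),Lip)_{\alpha/\theta,\infty}$ at the cost of a factor $3$ in the $K$-functional; by the Reiteration Theorem \eqref{eq:reiteration}--\eqref{eq:reiteration_norms} one rewrites $\big(B(\RR),(B(\RR),Lip)_{\alpha/\theta,\infty}\big)_{\theta,\infty}=(B(\RR),Lip)_{\alpha,\infty}$ at the cost of a factor $3$; and by \eqref{eq:Holder_interpolation} again $(B(\RR),Lip)_{\alpha,\infty}=C^\alpha_b$ at the cost of a factor $2$. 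The product $3\cdot3\cdot2=18$ yields exactly the stated bound $18\sqrt{1+4m_{2\alpha/\theta}}\,\|f\|_{C^\alpha_b}$. (When $\alpha=\theta$ one reads $C^{\alpha/\theta}_b=Lip$ and drops the reiteration step.)

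\emph{Part (b).} Suppose $g^{\alpha,\ell}(X_1)\in(L_2(\PP),\DD)_{\theta,\infty}$. By the martingale characterization of fractional smoothness (Lemma \ref{lemma:characterization_for_fractional_smoothness}, via Lemma \ref{lemma:chaos_martingale}(b)) there is $C>0$ with $\big\|g^{\alpha,\ell}(X_1)-\EE[g^{\alpha,\ell}(X_1)\mid\ftn_t]\big\|_{L_2(\PP)}^2\le C(1-t)^{\theta}$ for all $t\in(0,1)$. Put $s:=1-t$. Since $X_1-X_t$ is independent of $\ftn_t$ with the law of $X_s$, conditioning on $\ftn_t$ turns the left side into $\int_\RR\mathrm{Var}\big(g^{\alpha,\ell}(y+X_s)\big)\,\PP_{X_t}(\ud y)$, which by assumption \ref{A.pos2} is at least $c\int_{(k-1)2^{-\ell}}^{(k+2)2^{-\ell}}\mathrm{Var}\big(g^{\alpha,\ell}(y+X_s)\big)\,\ud y$ for $t\in[t_0,1]$. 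So the whole matter reduces to a lower bound for this integral.

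Fix $m\ge\ell$ and write $R_m:=\{x:2^{-m-4}<|x|\le2^{-m-3}\}$, $\nu_m:=\nu(R_m)$. I would split $X_s$ into its jumps in $R_m$ and the independent remainder $\tilde X_s$. The law of total variance (conditioning on $\tilde X_s$), together with the representation of a variance through two conditionally independent copies of the $R_m$-jump part and restricting to the event that one copy has no jump in $R_m$ while the other has exactly one, of size $\xi$ distributed as $\nu|_{R_m}/\nu_m$ (an event of probability at least $e^{-2}s\nu_m$ once $s\nu_m\le1$), gives
\[
	\mathrm{Var}\big(g^{\alpha,\ell}(y+X_s)\big)\ \ge\ c_1\,s\,\nu_m\,\EE\Big[\big(g^{\alpha,\ell}(y+\tilde X_s+\xi)-g^{\alpha,\ell}(y+\tilde X_s)\big)^2\Big]
\]
with $\xi$ independent of $\tilde X_s$. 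Integrating over $y\in[(k-1)2^{-\ell},(k+2)2^{-\ell}]$ and substituting, every realization of $\tilde X_s$ and $\xi$ leaves the integral of $\big(g^{\alpha,\ell}(\cdot+\xi)-g^{\alpha,\ell}(\cdot)\big)^2$ over an interval of length $3\cdot2^{-\ell}$, which contains a full dyadic cell $[k'2^{-\ell},(k'+1)2^{-\ell}]$; since $2^{-m-4}<|\xi|\le2^{-m-3}\le2^{-\ell-3}$, Lemma \ref{lem:ciesielski_function} bounds this below, uniformly in $\tilde X_s$ and $\xi$, by a constant multiple of $2^{-\ell}2^{-2\alpha m}$. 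Hence
\[
	\int_{(k-1)2^{-\ell}}^{(k+2)2^{-\ell}}\mathrm{Var}\big(g^{\alpha,\ell}(y+X_s)\big)\,\ud y\ \ge\ c_2\,s\,\nu_m\,2^{-2\alpha m}\qquad\text{when }s\le1-t_0,\ s\nu_m\le1,
\]
with $c_2$ depending only on $\ell$. Comparing with the characterization bound, $s\nu_m2^{-2\alpha m}\le C's^{\theta}$ for all such $s$; choosing $s=\min\{1-t_0,\,1/\nu_m\}$ (for $\nu_m>0$) gives $\nu_m\le C''2^{2\alpha m/\theta}$ for every $m\ge\ell$ with $C''$ independent of $m$ (in the range $s=1-t_0$ one even gets $\nu_m\le C''2^{2\alpha m}$, which is no weaker as $\theta\le1$). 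Therefore, for every $\varepsilon>0$,
\[
	\int_{\{0<|x|\le2^{-\ell-3}\}}|x|^{2\alpha/\theta+\varepsilon}\,\nu(\ud x)\ \le\ \sum_{m\ge\ell}(2^{-m-3})^{2\alpha/\theta+\varepsilon}\nu_m\ \le\ C''2^{-3(2\alpha/\theta+\varepsilon)}\sum_{m\ge\ell}2^{-m\varepsilon}\ <\ \infty,
\]
while $\int_{\{|x|>2^{-\ell-3}\}}\big(|x|^{2\alpha/\theta+\varepsilon}\wedge1\big)\,\nu(\ud x)\le\nu(\{|x|>2^{-\ell-3}\})<\infty$ since $\int(x^2\wedge1)\,\nu(\ud x)<\infty$. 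Thus $m_{2\alpha/\theta+\varepsilon}<\infty$ for all $\varepsilon>0$.

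I expect the main obstacle to be the variance lower bound for $\mathrm{Var}\big(g^{\alpha,\ell}(y+X_s)\big)$ above: carefully setting up the coupling of the two copies of the $R_m$-jump part over the common background $\tilde X_s$, and controlling the probability of the "one jump versus no jump" event so that the estimate degrades only linearly in $s\nu_m$ as $s\to0$. Once this is in place the appeal to Lemma \ref{lem:ciesielski_function} is uniform in the shift, and the subsequent choice of $s$ and the dyadic summation are routine.
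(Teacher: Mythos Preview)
Your Part~(a) is essentially the paper's proof: the paper carries out by hand the $K$-functional estimate $K(f(X_1),t;L_2(\PP),\DD)\le\sqrt{1+4m_{2\alpha/\theta}}\,K(f,t;B(\RR),C_b^{\alpha/\theta})$, while you invoke the abstract interpolation of the operator $T\colon g\mapsto g(X_1)$; both then identify $(B(\RR),C_b^{\alpha/\theta})_{\theta,\infty}$ with $C_b^\alpha$ via \eqref{eq:Holder_interpolation} and \eqref{eq:reiteration_norms} with the same constants $3\cdot3\cdot2=18$.

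Your Part~(b) is correct but takes a genuinely different route from the paper. The paper (Lemmas~\ref{lem:index_bound} and~\ref{lem:f_index}) passes to the symmetrized process $\tilde X=X-\bar X$, uses the characterization with $q=2$ after dropping from $(\theta,\infty)$ to $(\theta',2)$, and reduces the question to the finiteness of $\int_0^{t_0}\tilde\PP(|\tilde X_t|/t^{\theta'/\eta}>c)\,\ud t/t$; this is then handled by Khintchine's criterion for local growth of L\'evy processes combined with the Blumenthal--Getoor result that $\limsup_{t\to0}|\tilde X_t|/t^{1/\beta''}=\infty$ a.s.\ for $\beta''<\beta$, yielding $\beta\le 2\alpha/\theta$. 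Your argument is entirely elementary by comparison: you stay with the $q=\infty$ characterization, split off the compound Poisson piece at a single dyadic annulus $R_m$, and use the law of total variance plus the one-jump/no-jump coupling to extract a lower bound that feeds directly into Lemma~\ref{lem:ciesielski_function}. This avoids both Khintchine's theorem and the Blumenthal--Getoor machinery, and it yields the slightly more explicit scale-by-scale estimate $\nu(R_m)\le C\,2^{2\alpha m/\theta}$ rather than only the index bound $\beta\le 2\alpha/\theta$. The paper's approach, on the other hand, is packaged as the general Lemma~\ref{lem:f_index} (applicable to any $f$ satisfying \eqref{eq:f_assumption}, not just $g^{\alpha,\ell}$), and it transparently explains the link to the Blumenthal--Getoor index that runs through the whole article; your argument is specific to the dyadic structure but could be adapted similarly. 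The ``obstacle'' you flag is not really one: the compound Poisson splitting and the probability $e^{-2s\nu_m}s\nu_m\ge e^{-2}s\nu_m$ for $s\nu_m\le1$ are immediate, and the shift-invariance of Lemma~\ref{lem:ciesielski_function} over any dyadic cell absorbs the random translation by $\tilde X_s$ exactly as you say.
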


\begin{proof}
	(a) One finds for every $t>0$ and $\varepsilon>0$ a function $f_t\in C_b^{\alpha/\theta}$ such that
	\[ 
		\left( \|f-f_t\|_\infty + t\|f_t\|_{C_b^{\alpha/\theta}} \right) \leq K(f,t;B(\RR),C_b^{\alpha/\theta}) 
		+ \varepsilon.
	\]
	Using inequality \eqref{eq:D12Calphanorms} for $f_t(X_1)$ we get
	\begin{align*}
		K(f(X_1),t; L_2(\PP),\DD)  
		  & \leq \|(f-f_t)(X_1)\|_{L_2(\PP)} + t \|f_t(X_1)\|_{\DD}                                       \\
		  & \leq \|f-f_t\|_\infty +  t \|f_t\|_{C_b^{\alpha/\theta} }  \sqrt{ 1 + 4 m_{2\alpha/\theta} }    \\*
		  & \leq \sqrt{ 1 + 4 m_{2\alpha/\theta} }\left( K(f,t; B(\RR), C_b^{\alpha/\theta}) + \varepsilon \right) 
	\end{align*}
	so that 
	\[
		\|f(X_1)\|_{(L_2(\PP),\DD)_{\theta,\infty}} 
		\leq \sqrt{ 1 + 4 m_{2\alpha/\theta} } \|f\|_{(B(\RR),C_b^{\alpha/\theta})_{\theta,\infty}}.
	\]
	Using the first inequality of \eqref{eq:Holder_interpolation}, \eqref{eq:reiteration_norms}, and the second inequality of \eqref{eq:Holder_interpolation},  
	we obtain that
	\begin{align*}
		\|f\|_{(B(\RR),C_b^{\alpha/\theta})_{\theta,\infty}} 
		  & \leq 3\|f\|_{(B(\RR),(B(\RR),Lip)_{\alpha/\theta, \infty})_{\theta,\infty}}       \\
		  & \leq  9\|f\|_{(B(\RR),Lip)_{\alpha,\infty}}                                 \\
		  & \leq  18 \|f\|_{C^\alpha_b} 
	\end{align*}
	and this finishes the proof of (a). 
	The proof of assertion (b) is given in Section \ref{section:fractional_smoothness}.
\end{proof}

\begin{rmk}
	Assertion (a) of Theorem \ref{th:Holder_fractional} implies that  $f(X_1) \in (L_2(\PP),\DD)_{\alpha,\infty}$ for all $f\in C_b^\alpha$
	for any pure jump L\'evy process $X$.
	Also for the Brownian motion $B$ we  obtain the smoothness of level $(\alpha,\infty)$ for $f(B_1)$ for any $f\in C^\alpha_b$:
	choose $f_t \in C^1_b = Lip$ like in the proof of Theorem \ref{th:Holder_fractional} and use the fact that 
	\[
		\|f_t(B_1)\|_{\DD} \leq c \|f_t\|_{Lip}
	\] 
	from \cite[Lemma A.5]{Toivola1}, where $c>0$ is a constant not depending on $f_t$.
\end{rmk}


\section{Functions of bounded variation and smoothness}
\label{section:BV}


Let us first recall the space of \emph{normalized functions of bounded variation}, the space $NBV$.
The variation function of $f$ is given by
\[
	T_f(x) = \sup \left\{ \sum_{i=1}^n \left| f(x_i) - f(x_{i-1}) \right| : -\infty < x_0 < x_1 < \cdots < x_n = x,\ n\geq1 \right\}
\]
and the total variation of $f$ is $V(f) = \lim_{x\to\infty} T_f(x)$. 
The space of functions of bounded variation is
\[ 
	BV = \left\{ f:\mathbbm{R} \to \mathbbm{R} : \|f\|_{BV} = \limsup_{x\to-\infty} \left| f(x) \right| + V(f) < \infty \right\}.
\]
Note that when $V(f)< \infty$, then the limit $f(-\infty) := \lim_{x\to-\infty}f(x)$ exists (\cite[Theorem 3.27(c)]{folland}) and for 
$f\in BV$ we may write
$\|f\|_{BV} = |f(-\infty)| + V(f)$. Furthermore, 
\begin{equation*} 
	\|f\|_{\infty} \leq \|f\|_{BV}.
\end{equation*}
We denote by $NBV$ the space of normalized functions of bounded variation, that is, the space of all $f\in BV$ such that $f$ is right continuous
and $f(-\infty)=0$. When $f\in NBV$, then by \cite[Theorem 3.29]{folland} there exists a finite signed measure $\mu_f$ such that 
	\begin{equation} \label{eq:mu_f}
		f(x) =  \int_\RR \one_{(-\infty,x]}(u)\mu_f(\ud u)  = \int_\RR \one_{[u,\infty)}(x)\mu_f(\ud u) = \int_\RR \one_{[0,\infty)}(x-u) \mu_f(\ud u)
	\end{equation}
	for all $x\in\RR.$ Furthermore, $\mu_f$ admits the Jordan decomposition $\mu_f = \mu_f^+ - \mu_f^-$, where $\mu_f^+$ and $\mu_f^-$ are nonnegative  finite measures. 
	We write $|\mu_f| = \mu_f^+  +\mu_f^-$ so that $|\mu_f|(\RR) = \|f\|_{BV}$.


\subsection{Smoothness of first order}

\begin{thm}[{\cite[Example 3.1]{laukkarinen2012}}]\label{th:BV_gives_DD}
	For normalized functions of bounded variation we have the following.
	\begin{enumerate} 
		\item[(a)] Assume that \ref{A.bdd} holds. If $f\in NBV$ and $m_1 < \infty$, then $f(X_1)\in\DD$  and
		      \[
			      \|f(X_1)\|_{\DD} \leq  \sqrt{1+(1\vee \|p_1\|_\infty) m_1}\|f\|_{BV}.
		      \]
		\item[(b)] Suppose that $X_1$ satisfies \ref{A.pos} and let $K\in\RR$ be such that there is $r>0$ and $c>0$ such that the density $p_1$ of $X_1$ satisfies
		       $p_1(x)\geq c$ for all $x\in[K-r,K+r]$. Then 
		      $\one_{[K,\infty)}(X_1)\in\DD$ only if $m_1 < \infty$.
	\end{enumerate}
\end{thm}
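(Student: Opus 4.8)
The plan is to treat the two assertions separately, using the formula \eqref{eq:quotient_in_L_2} for the $\DD$-norm of $f(X_1)$ as the common starting point. For part (a), I would use the integral representation \eqref{eq:mu_f} of a normalized $BV$ function, $f(x)=\int_\RR \one_{[0,\infty)}(x-u)\,\mu_f(\ud u)$, to write the increment as
\[
	f(X_1+x)-f(X_1) = \int_\RR \left( \one_{[0,\infty)}(X_1+x-u)-\one_{[0,\infty)}(X_1-u) \right) \mu_f(\ud u),
\]
so that $|f(X_1+x)-f(X_1)| \leq \int_\RR \one_{\{|X_1-u|\leq |x|\}}\,|\mu_f|(\ud u)$ (the indicator difference is nonzero only when $X_1-u$ lies between $0$ and $-x$). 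Squaring, applying the Cauchy-Schwarz inequality with respect to the finite measure $|\mu_f|$, taking expectations, and using assumption \ref{A.bdd} to bound $\PP(|X_1-u|\leq|x|)\leq 2|x|\,\|p_1\|_\infty$ (and trivially $\le 1$), one gets
\[
	\EE\left[ \big(f(X_1+x)-f(X_1)\big)^2 \right] \leq \|f\|_{BV} \cdot |\mu_f|(\RR) \cdot \big( (2|x|\,\|p_1\|_\infty)\wedge 1 \big) \leq \|f\|_{BV}^2\, \big( (1\vee\|p_1\|_\infty)(2|x|\wedge 1)\big).
\]
Integrating against $\nu$, bounding $2|x|\wedge 1 \le 2(|x|\wedge 1)$ on $\{|x|\le 1\}$ and $2|x|\wedge 1 = 1$ on $\{|x|>1\}$ so the integral is controlled by $m_1$, and combining with $\|f(X_1)\|_{L_2(\PP)}^2\le\|f\|_\infty^2\le\|f\|_{BV}^2$ via \eqref{eq:quotient_in_L_2}, gives $f(X_1)\in\DD$ and the stated norm bound (after absorbing constants).

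For part (b), take $f=\one_{[K,\infty)}$, so $f(X_1+x)-f(X_1) = \one_{[K-x,\infty)}(X_1)-\one_{[K,\infty)}(X_1)$, which equals $\pm\one_{\{X_1 \text{ between } K-x \text{ and } K\}}$. If $\one_{[K,\infty)}(X_1)\in\DD$, then by \eqref{eq:quotient_in_L_2},
\[
	\infty > \int_\RR \EE\left[ \big(\one_{[K,\infty)}(X_1+x)-\one_{[K,\infty)}(X_1)\big)^2 \right]\nu(\ud x) = \int_\RR \PP\big(X_1 \in [K-|x|,K]\cup[K,K+|x|]\big)\,\nu(\ud x),
\]
and restricting the $x$-integral to $\{|x|\le r\}$ and using $p_1\ge c$ on $[K-r,K+r]$ yields $\PP(X_1\in[K-|x|,K])\ge c|x|$ for $|x|\le r$. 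Hence $\int_{\{|x|\le r\}} c|x|\,\nu(\ud x)<\infty$, which together with the always-valid $m_2<\infty$ (so $\int_{\{|x|>1\}}\nu(\ud x)\le\int_{\{|x|>1\}}x^2\nu(\ud x)<\infty$ only if... — actually one uses $\int_{\{r<|x|\le 1\}}|x|\nu(\ud x)\le\int_{\{r<|x|\le 1\}}\frac{|x|^2}{r}\nu(\ud x)<\infty$ and $\int_{\{|x|>1\}}(|x|\wedge 1)\nu(\ud x)=\nu(\{|x|>1\})<\infty$ by the L\'evy property) gives $m_1<\infty$.

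I expect the main obstacle to be purely technical rather than conceptual: in part (a), correctly applying Cauchy-Schwarz so that the double $|\mu_f|$-integral from squaring factors as $|\mu_f|(\RR)\times\int\cdots$, and keeping track of the min-with-$1$ truncation so that the $\nu$-integral genuinely reduces to $m_1$ and not something larger; and in part (b), being careful that the set $[K-|x|,K]\cup[K,K+|x|]$ lies inside $[K-r,K+r]$ only for $|x|\le r$, so the lower bound $\PP(X_1\in[K-|x|,K])\ge c|x|$ is legitimate, and then patching the three regimes $|x|\le r$, $r<|x|\le 1$, $|x|>1$ to conclude $m_1<\infty$. Since this theorem is quoted from \cite[Example 3.1]{laukkarinen2012}, the write-up can also simply cite that source, but the argument above is the natural self-contained route.
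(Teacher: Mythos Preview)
Your approach is essentially the paper's: for (a) you use the representation \eqref{eq:mu_f}, Cauchy--Schwarz against $|\mu_f|$, and the bounded density to control the indicator increments, and for (b) you lower-bound the same $\nu$-integral via $p_1\ge c$ on $[K-r,K+r]$ and then patch the ranges $r<|x|\le 1$ and $|x|>1$ using the L\'evy property (the paper leaves this last patching implicit). The only refinement needed is in (a): the difference $\one_{[0,\infty)}(X_1+x-u)-\one_{[0,\infty)}(X_1-u)$ is supported on an interval of length exactly $|x|$, not $2|x|$, so the sharper bound $\EE\big[(\one_{[u,\infty)}(X_1+x)-\one_{[u,\infty)}(X_1))^2\big]\le (\|p_1\|_\infty|x|)\wedge 1$ yields precisely the stated constant $\sqrt{1+(1\vee\|p_1\|_\infty)m_1}$ without any ``absorbing''.
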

\begin{proof}
	(a) Let $f\in NBV$ and $\mu_f$ be the according signed measure from \eqref{eq:mu_f}. We use H\"older's inequality to get
	\begin{align*}
		&\int_\RR \EE\left[ \left( f(X_1 + x) -f(X_1)  \right)^2 \right] \nu(\ud x)\\
		& = \int_\RR \EE\left[ \left( \int_\RR \left( \one_{[u,\infty)}(X_1 + x) - \one_{[u,\infty)}(X_1) \right) \mu_f(\ud u)  \right)^2 \right] \nu(\ud x) \\*
		& \leq |\mu_f|(\RR) \int_\RR  \int_\RR \EE \left[ \left( \one_{[u,\infty)}(X_1 + x) - \one_{[u,\infty)}(X_1) \right)^2 \right] |\mu_f|(\ud u)  \nu(\ud x) \\
		& \leq |\mu_f|(\RR) \int_\RR  \int_\RR \left(  \|p_1\|_\infty |x| \wedge 1  \right) |\mu_f|(\ud u)  \nu(\ud x) \\
		& \leq \|f\|_{BV}^2 (1\vee\|p_1\|_\infty ) \int_\RR (|x| \wedge 1) \nu(\ud x).
	\end{align*}
	Hence from \eqref{eq:quotient_in_L_2} we obtain that
	\begin{align*}
		\|f(X_1)\|^2_{\DD}& = \|f(X_1)\|^2_{L_2(\PP)} + \int_\RR \EE\left[ \left( f(X_1 + x) -f(X_1)  \right)^2 \right] \nu(\ud x) \\*
		                  & \leq \|f\|_{BV}^2 + \|f\|_{BV}^2 (1\vee\|p_1\|_\infty) m_1.
	\end{align*}
	(b) Let $r>0$ and $c>0$ be such that $p_1(x) \geq c$ for all $x\in[K-r,K+r]$. 
	Let $f=\one_{[K,\infty)}$. Then $f\in NBV$ and
	\begin{align*}
		&  \int_\RR \EE\left[ \left| f(X_1 + x) -f(X_1)  \right|^2 \right] \nu(\ud x)\\
		& = \int_{(-\infty,0)} \EE \left[ \one_{[K,K-x)}(X_1) \right] \nu(\ud x) 
		+ \int_{(0,\infty)}\EE \left[ \one_{[K-x,K)}(X_1) \right] \nu(\ud x)\\
		& \geq c\int_{0<|x|\leq r} |x|\nu(\ud x).
	\end{align*}
	By \eqref{eq:quotient_in_L_2} it holds that $m_1<\infty$, if $f(X_1)\in\DD$.
\end{proof}


\subsection{Fractional smoothness}

If $m_1<\infty$ does not hold, it is still possible to attain fractional smoothness with functions in $NBV$. In 
\cite[Example 4.2(a)]{geiss-geiss-laukkarinen} it is verified that $\one_{(K,\infty)}(X_1) \in 
(L_2(\PP),\DD)_{1/2,\infty}$. Note that in \cite[Example 4.2(a)]{geiss-geiss-laukkarinen} it is assumed 
a small ball estimate for the distribution and this assumption is equivalent with \ref{A.bdd} (one can easily 
see this by using the steps of the proof of \cite[Theorem 2.4(iii)]{avikainen1}). 
In the following theorem we show that the smoothness level increases as the Blumenthal-Getoor index decreases.

\begin{thm}\label{th:BV_fractional}
	Let $1/2 \leq \theta < 1$. 
	\begin{enumerate} 
	 \item[(a)] 
		Assume that \ref{A.bdd} holds. If $f\in NBV$ and $m_{1/\theta} < \infty$, then 
		\[
			f(X_1) \in \left( L_2(\PP), \DD  \right)_{\theta,\infty}
		\]
		and
		\[ 
			\|f(X_1)\|_{(L_2(\PP),\DD)_{\theta,\infty}} \leq \left( \sqrt{\|p\|_\infty} 
			+ \sqrt{1+2\left( \|p\|_\infty \vee 1 \right) m_{1/\theta}} \right)   \|f\|_{BV}.
		\]
		Especially, $f(X_1) \in \left( L_2(\PP), \DD  \right)_{\frac{1}{2},\infty}$ for any L\'evy measure $\nu$.
	\item[(b)]
		Assume that \ref{A.pos2} holds and let $t_0\in(0,1)$ and $K\in\RR$ be such that there exist $r>0$ and $c>0$ with
		$p_t(x)\geq c$ for all $x\in[K-2r,K+2r]$ and all $t\in[t_0,1]$. Then 
		\[
			\one_{[K,\infty)}(X_1) \in \left( L_2(\PP), \DD  \right)_{\theta,\infty}\quad \text{only if} \quad m_{1/\theta + \varepsilon}< \infty \text{ for all }\varepsilon>0.
		\]
		
	\end{enumerate}
\end{thm}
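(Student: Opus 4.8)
The plan for (a) is to estimate the $K$-functional $K(f(X_1),t;L_2(\PP),\DD)$ directly by approximating $f$ at a scale $\delta=\delta(t)$. Fix $\delta\in(0,1]$, let $\psi_\delta:\RR\to[0,1]$ be the piecewise-linear function of slope $\delta^{-1}$ that agrees with $\one_{[0,\infty)}$ off the interval $[0,\delta]$, and for $f\in NBV$ with associated signed measure $\mu_f$ from \eqref{eq:mu_f} put $f_\delta(x):=\int_\RR\psi_\delta(x-u)\,\mu_f(\ud u)$; then $f_\delta$ is Lipschitz with $\|f_\delta\|_{C^1}\le\delta^{-1}\|f\|_{BV}$ and $\|f_\delta\|_\infty\le\|f\|_{BV}$. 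From $f(X_1)=(f-f_\delta)(X_1)+f_\delta(X_1)$ one gets $K(f(X_1),t;L_2(\PP),\DD)\le\|(f-f_\delta)(X_1)\|_{L_2(\PP)}+t\,\|f_\delta(X_1)\|_{\DD}$. Since $\one_{[0,\infty)}-\psi_\delta$ is supported on $[0,\delta)$ with values in $[0,1]$, one has $|f(x)-f_\delta(x)|\le|\mu_f|((x-\delta,x])$, and hence by Cauchy--Schwarz (pulling out $|\mu_f|(\RR)=\|f\|_{BV}$) and \ref{A.bdd},
\[
	\|(f-f_\delta)(X_1)\|^2_{L_2(\PP)}\ \le\ \|f\|_{BV}\int_\RR\PP\big(X_1\in[u,u+\delta)\big)\,|\mu_f|(\ud u)\ \le\ \|p_1\|_\infty\,\delta\,\|f\|_{BV}^2 .
\]

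For the $\DD$-term I would use \eqref{eq:quotient_in_L_2}, so that $\|f_\delta(X_1)\|_{\DD}^2=\|f_\delta(X_1)\|_{L_2(\PP)}^2+\int_\RR\EE[(f_\delta(X_1+x)-f_\delta(X_1))^2]\,\nu(\ud x)$ with $\|f_\delta(X_1)\|_{L_2(\PP)}^2\le\|f\|_{BV}^2$. The crucial observation is that $u\mapsto\psi_\delta(y+x-u)-\psi_\delta(y-u)$ is bounded by $\min(|x|/\delta,1)$ \emph{and} vanishes off an interval $J_{y,x}$ of length $|x|+\delta$; thus $(f_\delta(y+x)-f_\delta(y))^2\le\min(|x|/\delta,1)^2\,\|f\|_{BV}\,|\mu_f|(J_{y,x})$, and, taking $y=X_1$, using Fubini and that $\{y:u\in J_{y,x}\}$ is an interval of length $|x|+\delta$, together with \ref{A.bdd},
\[
	\EE\big[(f_\delta(X_1+x)-f_\delta(X_1))^2\big]\ \le\ \min\!\big(\tfrac{|x|}{\delta},1\big)^{2}(|x|+\delta)\,\|p_1\|_\infty\,\|f\|_{BV}^2 \qquad(|x|\le1),
\]
while $\EE[(f_\delta(X_1+x)-f_\delta(X_1))^2]\le4\|f\|_{BV}^2$ for $|x|>1$. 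Splitting the $\nu$-integral at $|x|=\delta$ and $|x|=1$ and using $|x|^2\le|x|^{1/\theta}\delta^{2-1/\theta}$ on $\{|x|\le\delta\}$ and $|x|\le|x|^{1/\theta}\delta^{1-1/\theta}$ on $\{\delta\le|x|\le1\}$ — here the hypothesis $\theta\ge\tfrac12$, i.e.\ $1/\theta\le2$, is used — I obtain
\[
	\int_\RR\EE\big[(f_\delta(X_1+x)-f_\delta(X_1))^2\big]\,\nu(\ud x)\ \le\ C\,(\|p_1\|_\infty\vee1)\,\delta^{\,1-1/\theta}\,m_{1/\theta}\,\|f\|_{BV}^2
\]
with an absolute constant $C$, so that $\|f_\delta(X_1)\|_{\DD}^2\le\|f\|_{BV}^2\big(1+C(\|p_1\|_\infty\vee1)\,\delta^{1-1/\theta}m_{1/\theta}\big)$.

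Choosing now $\delta=t^{2\theta}$ (legitimate for $0<t\le1$; for $t\ge1$ just bound $K(f(X_1),t)\le\|f(X_1)\|_{L_2(\PP)}\le\|f\|_{BV}$) makes the exponents balance — $\delta^{1/2}=t^\theta$ and $t^2\delta^{1-1/\theta}=t^{2\theta}$ — so the two contributions to the $K$-functional are each $\le t^\theta\|f\|_{BV}$ times a constant of the claimed shape; taking the supremum over $t$ gives $f(X_1)\in(L_2(\PP),\DD)_{\theta,\infty}$ with the stated norm bound (the factor $2$ requiring more careful bookkeeping of constants than the crude splitting above), and for $\theta=\tfrac12$ only $m_2<\infty$ is needed, which always holds. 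For part (b) I would invoke the characterization of fractional smoothness in Lemma \ref{lemma:characterization_for_fractional_smoothness}: membership of $\one_{[K,\infty)}(X_1)$ in $(L_2(\PP),\DD)_{\theta,\infty}$ forces $\sup_{0<t<1}(1-t)^{-\theta}\|\one_{[K,\infty)}(X_1)-\EE[\one_{[K,\infty)}(X_1)\mid\ftn_t]\|_{L_2(\PP)}<\infty$. I would then lower-bound the conditional $L_2$-distance via the variance identity $\mathrm{Var}(Z\mid\ftn_t)=\tfrac12\EE[(Z-Z')^2\mid\ftn_t]$ with $Z=\one_{[K,\infty)}(X_t+U)$, $Z'=\one_{[K,\infty)}(X_t+U')$ and $U,U'$ conditionally i.i.d.\ copies of the increment $X_1-X_t\stackrel{d}{=}X_{1-t}$: on $\{|U|\le r,\,|U'|\le r\}$ the quantity $(Z-Z')^2$ is the indicator that $X_t$ lies between $K-U$ and $K-U'$, an interval inside $[K-2r,K+2r]$, so \ref{A.pos2} gives for $t\ge t_0$
\[
	\big\|\one_{[K,\infty)}(X_1)-\EE[\one_{[K,\infty)}(X_1)\mid\ftn_t]\big\|^2_{L_2(\PP)}\ \ge\ \tfrac{c}{2}\,\EE\big[\,|X_{1-t}-X_{1-t}'|\,\one_{\{|X_{1-t}|\le r,\ |X_{1-t}'|\le r\}}\big],
\]
$X_{1-t}'$ being an independent copy of $X_{1-t}$. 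It then remains to bound this increment quantity from below by the small-scale behaviour of $\nu$ (the symmetrised increment has Lévy measure comparable to $\nu$ and at time $1-t$ charges a scale $\rho$ with probability of order $(1-t)\,\nu(\{|x|>\rho\})$), and, plugging this back into the supremum bound and letting $t\to1$ along a suitable sequence of scales, to conclude $m_{1/\theta+\varepsilon}<\infty$ for every $\varepsilon>0$; these details belong to Section \ref{section:fractional_smoothness}.

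The main obstacle in (a) is precisely the increment estimate for $f_\delta$: the naive Lipschitz bound $|f_\delta(y+x)-f_\delta(y)|\le\delta^{-1}|x|\,\|f\|_{BV}$ forces $\delta\gtrsim t^{2\theta(1-\theta)}$ from the $\DD$-term while the $L_2$-term needs $\delta\lesssim t^{2\theta}$, an impossible pair of constraints since $2\theta(1-\theta)<2\theta$; it is only the \emph{localization} of the increment onto a set of length $|x|+\delta$, used together with \ref{A.bdd}, that gains the extra factor $\delta$ and rescues the exponent. In (b) the delicate point is the sharp lower bound on the conditional $L_2$-distance in terms of the small-jump part of $\nu$; the loss of a ``$+\varepsilon$'' there is exactly why the necessary condition reads $m_{1/\theta+\varepsilon}<\infty$ for all $\varepsilon>0$ rather than $m_{1/\theta}<\infty$, and the discussion of strictly stable $X$ in Section \ref{section:stable} shows that this gap cannot be closed in general.
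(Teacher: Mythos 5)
For part (a) your argument is essentially the paper's: you approximate $f$ by a mollified version at a scale tied to $t$, estimate the $L_2$-error and the $\DD$-norm of the approximation via the bounded density \ref{A.bdd}, and balance the two with $\delta=t^{2\theta}$. The only real difference is the mollifier: the paper convolves $\mu_f$ with the power ramp $g_t(x)=t^{-1}x^{1/(2\theta)}\one_{(0,t^{2\theta})}+\one_{[t^{2\theta},\infty)}$, chosen so that $\int_\RR(g_t(z+x)-g_t(z))^2\,\ud z\le 2t^{2(\theta-1)}|x|^{1/\theta}$ in \emph{both} regimes $|x|\lessgtr t^{2\theta}$, which delivers the stated constant $2$ directly; your piecewise-linear $\psi_\delta$ with the localization-plus-density estimate gives the same exponents (your use of $1/\theta\le 2$ mirrors the paper's use of $\tfrac1{2\theta}\le 1$) and hence the membership and a bound of the stated shape, but with a slightly worse absolute constant, as you acknowledge. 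That is a bookkeeping difference, not a gap.

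Part (b), however, contains a genuine gap. First, a quantitative slip: Lemma \ref{lemma:characterization_for_fractional_smoothness} weights the conditional $L_2$-distance by $(1-t)^{-\theta/2}$, not $(1-t)^{-\theta}$, so $(\theta,\infty)$-membership yields $\big\|f(X_1)-\EE[f(X_1)|\ftn_t]\big\|_{L_2(\PP)}^2\le C(1-t)^{\theta}$ rather than $C(1-t)^{2\theta}$. With your stated exponent the scheme would prove too much: for the symmetric $\beta$-stable process with $\beta=1/\theta$ one has $\bar\EE\EE\big[|X_s-\bar X_s|\one_{\{\cdot\}}\big]\asymp s^{\theta}$, yet Theorem \ref{th:stable_K}(c)(ii) shows $\one_{[K,\infty)}(X_1)\in(L_2(\PP),\DD)_{\theta,\infty}$ in that case, so a bound by $s^{2\theta}$ cannot follow from membership. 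Second, and more importantly, the decisive step --- passing from the lower bound $\tfrac{c}{2}\,\EE\bar\EE\big[|X_{1-t}-\bar X_{1-t}|\,\one_{\{\cdot\}}\big]$ to the conclusion $m_{1/\theta+\varepsilon}<\infty$ for all $\varepsilon>0$ --- is exactly what you leave to ``details''. In the paper this is the content of Lemma \ref{lem:f_index}: one embeds $(L_2(\PP),\DD)_{\theta,\infty}\subseteq(L_2(\PP),\DD)_{\theta',2}$ via \eqref{eq:lexicographical_order}, uses the $q=2$ form of Lemma \ref{lemma:thm_geisshujo} to get an integral (not pointwise) condition in $t$, controls the error terms $\tilde\PP(|\tilde X_t|>r)$, $\bar\PP(|\bar X_t|>R)$ by Lemma \ref{lem:index_bound}(c), and then invokes Khintchine's integral test together with the Blumenthal--Getoor results (Lemma \ref{lem:index_bound}(a),(b)) to conclude $\beta\le 1/\theta$, hence $m_{1/\theta+\varepsilon}<\infty$. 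A direct pointwise route in the spirit of your sketch (choosing a scale $\rho$ with $s\,\tilde\nu(\{|x|>\rho\})\asymp1$ and using a one-big-jump lower bound for symmetric L\'evy tails) can be made to work once the exponent is corrected to $(1-t)^{\theta}$ for the squared distance, but as written neither that estimate nor any substitute for Lemma \ref{lem:index_bound} is supplied, so the ``only if'' direction is not proved.
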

\begin{proof}
	(a) Let $f\in NBV$ and $\mu_f$ be the according signed measure from \eqref{eq:mu_f}.
	For $t\in(0,1)$ we define
	\[
		g_t(x) =  
		\begin{cases}
			0, & x\leq 0\\
			\frac{1}{t}x^{\frac1{2\theta}}, & 0< x < t^{2\theta} \\
			1, & x\geq t^{2\theta}
		\end{cases}
		\quad \text{ and } \quad f_t(x) = \int_\RR g_t(x-u) \mu_f(\ud u).
	\]
	Then 
	\begin{align*}
		& \EE \left[ \left( f_t(X_1+x) - f_t(X_1)  \right)^2 \right]                                             \\
		& =  \int_\RR \left( \int_\RR \left[ g_t(y+x-u) - g_t(y-u) \right] \mu_f(\ud u)  \right)^2 p(y)\ud y          \\
		& \leq  |\mu_f|(\RR) \|p\|_\infty  \int_\RR\int_\RR \left( g_t(y+x-u) - g_t(y-u) \right)^2 |\mu_f|(\ud u)\ud y \\
		& = |\mu_f|(\RR)^2 \|p\|_\infty  \int_\RR \left( g_t(z+x) - g_t(z) \right)^2 \ud z   .                                  
	\end{align*}
	Note that $g_t(\cdot+x)-g_t$ is nonzero only on an interval of length $t^{2\theta}+|x|$ and
	\begin{align*}
		|g_t(z+x)-g_t(z)|
		&  = \left| \int_z^{z+x} \frac1{2\theta t} u^{\frac1{2\theta}-1}\one_{(0,t^{2\theta})}(u) \ud u  \right| \\
		&  \leq \int_0^{|x|} \frac1{2\theta t} u^{\frac1{2\theta}-1}\one_{(0,t^{2\theta})}(u) \ud u \\
		& = g_t(|x|) \leq 1 
	\end{align*} 
	for all $x,z\in\RR$,  since $\frac1{2\theta}-1 \leq 0$. When $|x| \geq t^{2\theta}$, then
	\[
		\int_\RR\left( g_t(z+x) - g_t(z) \right)^2 \ud z 
		\leq 2|x| = 2 t^{2(\theta -1)} |x| t^{2(1-\theta)} 
		\leq 2  t^{2(\theta -1)} |x|^{1/\theta}.
	\]
	When $|x| < t^{2\theta}$, then 
	\[
		\int_\RR \left( g_t(z+x) - g_t(z) \right)^2 \ud z  \leq 2t^{2\theta} g_t^2(|x|) 
		= 2 t^{2(\theta-1)} |x|^{1/\theta}.
	\]
	On the other hand,
	\begin{align*}
		& \EE \left[ \left( f_t(X_1+x) - f_t(X_1)  \right)^2 \right] \\
		& = \EE \left[ \left( \int_\RR \left( g_t(X_1+x-u) - g_t(X_1-u) \right) \mu_f(\ud u) \right)^{\!\!2}  \right] \\
		&\leq |\mu_f|^2(\RR),
	\end{align*}
	so that
	\begin{align*}     
		& \int_\RR \EE\left[ \left( f_t(X_1 + x) -f_t(X_1)  \right)^2 \right] \nu(\ud x) \\
		& \leq  \int_\RR |\mu_f|(\RR)^2 \left( \|p\|_\infty \vee 1 \right) \left( 2 t^{2(\theta -1)}|x|^{1/\theta} \wedge 1 \right) \nu(\ud x) \nonumber \\
		& \leq |\mu_f|(\RR)^2 \left( \|p\|_\infty \vee 1 \right) 2 t^{2(\theta -1)} m_{1/\theta}                                                        
	\end{align*}
	since $0<t<1$, and therefore $f_t(X_1) \in\DD$. It also holds,  by \eqref{eq:mu_f}, that
	\begin{align*}
		\|(f-f_t)(X_1)\|^2_{L_2(\PP)}
		& = \int_\RR \left( \int_\RR \left[ \one_{[0,\infty)}(y-u) - g_t(y-u) \right] \mu_f(\ud u) \right)^2 \PP_{X_1} (\ud y)\nonumber \\
		& \leq  |\mu_f|(\RR)^2 \|p\|_\infty\int_\RR \left( \one_{[0,\infty)}(y) - g_t(y) \right)^2 \ud y\nonumber                       \\ 
		& \leq  |\mu_f|(\RR)^2 \|p\|_\infty t^{2\theta}                                                                                
	\end{align*}
	and 
	\[
		\|f_t(X_1)\|_{L_2(\PP)} \leq |\mu_f|(\RR).
	\]
	We obtain for $t\in(0,1)$ that
	\begin{align*}
		& t^{-\theta}\left( \|(f-f_t)(X_1)\|_{L_2(\PP)} + t\sqrt{ \|f_t(X_1)\|^2_{L_2(\PP)}                     
		+ \|D f_t(X_1)\|^2_{L_2(\mm\otimes\PP)} }  \right) \\*
		  & \leq t^{-\theta}\left(  \sqrt{\|p\|_\infty}|\mu_f|(\RR) t^{\theta} + t  \sqrt{ |\mu_f|(\RR)^2           
		+ |\mu_f|(\RR)^2 \left( \|p\|_\infty \vee 1 \right) 2 t^{2(\theta -1)} m_{1/\theta}  } \right) \\*
		  & \leq  \left( \sqrt{\|p\|_\infty} + \sqrt{1+2\left( \|p\|_\infty \vee 1 \right)  m_{1/\theta}} \right) 
		|\mu_f|(\RR). 
	\end{align*}
	Thus
	\begin{align*}
		& \left\| f(X_1) \right\|_{(L_2(\PP),\DD)_{\theta,\infty}}                                                \\
		& = \sup_{t>0} t^{-\theta} \inf \{ \|Y_0\|_{L_2(\PP)} + t\|Y_1\|_{\DD}: Y_0 + Y_1 = f(X_1) \}             \\
		& \leq \sup_{t\in(0,1)} t^{-\theta}\left( \|(f-f_t)(X_1)\|_{L_2(\PP)} + t\sqrt{ \|f_t(X_1)\|^2_{L_2(\PP)} 
		+ \|D f_t(X_1)\|^2_{L_2(\mm\otimes\PP)} }  \right) \\
		& \qquad \vee \|f(X_1)\|_{L_2(\PP)}                                                                       \\
		& \leq \left( \sqrt{\|p\|_\infty} + \sqrt{1+2\left( \|p\|_\infty \vee 1 \right)  m_{1/\theta}} \right)    
		\|f\|_{BV}.
	\end{align*}
	The proof of assertion (b) is given in Section \ref{section:fractional_smoothness}.
\end{proof}
%


\section{Sharpness of the connection between the smoothness index and the Blumenthal-Getoor index}
\label{section:fractional_smoothness}


In Lemma \ref{lemma:characterization_for_fractional_smoothness} below, we adapt the characterisation for fractional smoothness 
from \cite[Corollary 2.3]{geiss-hujo}, where it is written for the Brownian motion.

\begin{definition}
	For a sequence of Banach spaces $E = (E_n)_{n=0}^\infty$ with $E_n \neq \{0\}$
	we let $\ell_2 (E)$ and $d_{1,2} (E)$ be the Banach spaces of all $a = (a_n)_{n=0}^\infty \in E$ such
	that
	\[
		\|a\|_{\ell_2 (E)} := \left(\sum_{n=0}^\infty \|a_n\|^2_{E_n}\right)^\frac{1}{2}
		\quad \text{and} \quad \|a\|_{d_{1,2}(E)}
		:= \left(\sum_{n=0}^\infty (n + 1) \|a_n\|^2_{E_n}\right)^\frac{1}{2}
	\]
	respectively, are finite. For $a\in E$ we let $Ta:[0,1]\to\RR$ be defined by
	\[
		(Ta)(t) := \sum_{n=0}^\infty \|a_n\|^2_{E_n} t^n.
	\]
\end{definition}
 We use the notation $A\sim_c B$ for $\tfrac1c B \leq A \leq cB$, where $A,B \in[0,\infty]$ and $c\geq 1$.
\begin{lem}[{\cite[Theorem 2.2]{geiss-hujo}}] \label{lemma:thm_geisshujo}
	For $\theta\in(0,1)$, $q\in[1,\infty]$ and $a \in \ell_2(E)$ one has
	\begin{align*}
		  & \|a\|_{ (\ell_2(E),d_{1,2}(E))_{\theta,q} }  \\
		  &  \sim_c \|a\|_{\ell_2 (E)} +               
		\left\| (1-t)^{ \frac{1-\theta}{2} } \sqrt{ (Ta)'(t) }  \right\|_{L_q\left((0,1),\bor(0,1),\frac{\ud t}{1-t}\right) }  \\
		  & \sim_c \|a\|_{\ell_2 (E)} +                 
		\left\| (1-t)^{-\frac{\theta}{2}} \sqrt{ (Ta)(1) - (Ta)(t) }  \right\|_{L_q\left((0,1),\bor(0,1),\frac{\ud t}{1-t}
		\right)},
	\end{align*}
	where $c\geq 1$ depends only on $(\theta,q)$, and the expressions may be infinite.
\end{lem}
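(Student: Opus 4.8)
The plan is to compute the $K$-functional of the couple $(\ell_2(E),d_{1,2}(E))$ up to constants depending only on $(\theta,q)$, and then transcribe it into the two claimed forms by elementary substitutions and a Hardy-type inequality. For any decomposition $a=a^0+a^1$ one has $\sqrt{\|a^0\|_{\ell_2(E)}^2 + t^2\|a^1\|_{d_{1,2}(E)}^2}\le \|a^0\|_{\ell_2(E)} + t\|a^1\|_{d_{1,2}(E)}\le \sqrt2\,\sqrt{\|a^0\|_{\ell_2(E)}^2 + t^2\|a^1\|_{d_{1,2}(E)}^2}$, so $K(a,t;\ell_2(E),d_{1,2}(E))$ is $\sim_{\sqrt2}$ to the quadratic version $K_2(a,t):=\inf\{(\|a^0\|_{\ell_2(E)}^2 + t^2\|a^1\|_{d_{1,2}(E)}^2)^{1/2}:a=a^0+a^1\}$. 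Since the squared norms of $\ell_2(E)$ and $d_{1,2}(E)$ are sums over $n$ of $\|a_n\|_{E_n}^2$ with weights $1$ and $n+1$, the infimum defining $K_2$ decouples over $n$; combined with the identity $\inf\{\|x^0\|^2+\mu\|x^1\|^2:x=x^0+x^1\}=\tfrac{\mu}{1+\mu}\|x\|^2$, valid for a single vector $x$ in any Banach space (lower bound from $(\|x^0\|-\mu\|x^1\|)^2\ge0$ and $\|x\|\le\|x^0\|+\|x^1\|$, upper bound from $x^0=\tfrac{\mu}{1+\mu}x$), this yields
\[
	K_2(a,t)^2 = \sum_{n=0}^\infty \frac{(n+1)t^2}{1+(n+1)t^2}\,\|a_n\|_{E_n}^2 .
\]

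Next I split $\int_0^\infty=\int_0^1+\int_1^\infty$. For $t\ge1$ every factor $\tfrac{(n+1)t^2}{1+(n+1)t^2}$ lies in $[\tfrac12,1]$, so $K_2(a,t)\sim\|a\|_{\ell_2(E)}$ and (using $\theta>0$) the tail $\|t^{-\theta}K_2(a,\cdot)\|_{L_q((1,\infty),\ud t/t)}$ is comparable to $\|a\|_{\ell_2(E)}$. On $(0,1)$ I substitute $s=t^2$, which turns $\|t^{-\theta}K_2(a,\cdot)\|_{L_q((0,1),\ud t/t)}$ into $\|s^{-\theta/2}\sqrt{S(s)}\|_{L_q((0,1),\ud s/s)}$ with $S(s)=\sum_{n}\tfrac{(n+1)s}{1+(n+1)s}\|a_n\|_{E_n}^2$. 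For $n\ge1$ and $s\in(0,1)$ one has the elementary two-sided estimate $\tfrac{(n+1)s}{1+(n+1)s}\sim_c 1-(1-s)^n$ (both sides are comparable to $\min(1,(n+1)s)$, using Bernoulli's inequality and $1-e^{-y}\ge(1-e^{-1})y$ on $[0,1]$), while the $n=0$ summand of $S$ is at most $s\|a_0\|_{E_0}^2$ and hence contributes, after integration in $\ud s/s$, only a term bounded by $c_{\theta,q}\|a\|_{\ell_2(E)}$ (here $\theta<1$ is used). Since $(Ta)(1)=\|a\|_{\ell_2(E)}^2$ and $(Ta)(1)-(Ta)(1-s)=\sum_{n\ge1}(1-(1-s)^n)\|a_n\|_{E_n}^2$, this proves that $\|s^{-\theta/2}\sqrt{S(s)}\|_{L_q((0,1),\ud s/s)}$ is $\sim_c$ to $\|s^{-\theta/2}\sqrt{(Ta)(1)-(Ta)(1-s)}\|_{L_q((0,1),\ud s/s)}$ modulo an additive $\|a\|_{\ell_2(E)}$; undoing $s=1-t$ and reassembling $L_q(0,\infty)\sim L_q(0,1)+L_q(1,\infty)$ gives the second $\sim_c$ of the lemma, the $\|a\|_{\ell_2(E)}$ term reappearing as $\|a\|_{\ell_2(E)}$ in front.

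For the first $\sim_c$ I would compare $(Ta)(1)-(Ta)(t)=\int_t^1(Ta)'(u)\,\ud u$ with $(1-t)(Ta)'(t)$. As $(Ta)'(u)=\sum_{n\ge1}n\|a_n\|_{E_n}^2 u^{n-1}$ is nondecreasing on $[0,1)$, one gets $(Ta)(1)-(Ta)(t)\ge(1-t)(Ta)'(t)$ directly, which is one of the two inequalities. The reverse is the classical weighted Hardy inequality: with $h(s):=(Ta)'(1-s)$, which is \emph{nonincreasing}, $\int_0^1\big(s^{-\theta}\!\int_0^s h(u)\,\ud u\big)^{q/2}\tfrac{\ud s}{s}\le C_{\theta,q}\int_0^1\big(s^{1-\theta}h(s)\big)^{q/2}\tfrac{\ud s}{s}$ (Hardy's inequality on $L_{q/2}$ when $q\ge2$; for $q<2$ the same bound follows from a dyadic splitting $\int_0^s h=\sum_k\int_{2^{-k-1}s}^{2^{-k}s}h$ that uses the monotonicity of $h$, with the $q=\infty$ case being a direct pointwise estimate), which after $s=1-t$ is exactly $\|(1-t)^{-\theta/2}\sqrt{(Ta)(1)-(Ta)(t)}\|_{L_q(\ud t/(1-t))}\le C_{\theta,q}^{1/2}\|(1-t)^{(1-\theta)/2}\sqrt{(Ta)'(t)}\|_{L_q(\ud t/(1-t))}$. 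Every constant produced along the way depends only on $\theta$ and $q$, so all equivalences hold with a single $c=c(\theta,q)\ge1$; the continuous embedding $(\ell_2(E),d_{1,2}(E))_{\theta,q}\subseteq\ell_2(E)$ from \eqref{eq:lexicographical_order} ensures the additive $\|a\|_{\ell_2(E)}$ terms are controlled by the interpolation norm, so they are genuinely absorbed. The step I expect to be the main obstacle is keeping the Hardy estimate uniform all the way down to $q=1$ and carefully bookkeeping these additive $\|a\|_{\ell_2(E)}$ contributions; the rest is routine.
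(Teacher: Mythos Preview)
The paper does not prove this lemma at all: it is quoted verbatim as \cite[Theorem~2.2]{geiss-hujo} and used as a black box, so there is no ``paper's own proof'' to compare against. That said, your sketch is a correct proof and follows the standard route for weighted $\ell_2$ couples: the exact $K_2$-functional formula, the two-sided estimate $\frac{(n+1)s}{1+(n+1)s}\sim \min(1,(n+1)s)\sim 1-(1-s)^n$, and the Hardy step for the passage between $(Ta)(1)-(Ta)(t)$ and $(1-t)(Ta)'(t)$. Your computation of $\inf\{\|x^0\|^2+\mu\|x^1\|^2:x=x^0+x^1\}=\tfrac{\mu}{1+\mu}\|x\|^2$ is valid in an arbitrary Banach space exactly as you argue (the lower bound reduces to $(\|x^0\|-\mu\|x^1\|)^2\ge0$), and the dyadic argument you indicate for the Hardy inequality when $q<2$ does go through because $h(s)=(Ta)'(1-s)$ is nonincreasing: writing $\int_0^s h=\sum_{k\ge0}\int_{2^{-k-1}s}^{2^{-k}s}h\le\sum_{k\ge0}2^{-k-1}s\,h(2^{-k-1}s)$, subadditivity of $x\mapsto x^{q/2}$ for $q\le2$, and a change of variable give a convergent geometric series with ratio $2^{-\theta q/2}$. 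The $q=\infty$ case is, as you say, a direct pointwise estimate. The only places to be careful in a full write-up are the bookkeeping of the additive $\|a\|_{\ell_2(E)}$ terms when splitting off the $n=0$ summand and when gluing the $(0,1)$ and $[1,\infty)$ ranges of $t$ back together, but these are routine.
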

We will apply this theorem to the It\^{o} chaos decomposition.  Let $(\ftn_t)_{t\geq 0}$ be 
	the augmented natural filtration of $X$.
Throughout this section we let $\bar{X}$ be an independent copy of $X$ on $(\bar{\Omega},\bar{\ftn}, \bar{\PP})$.
We will use the notation $\bar{\EE}$ for the expectation with respect to the measure $\bar{\PP}$.

\begin{lem}\label{lemma:characterization_for_fractional_smoothness}
	For $\theta\in(0,1)$, $q\in[1,\infty]$ and $f(X_1) \in L_2(\PP)$ one has
	\begin{align*}
		  & \|f(X_1)\|_{ (L_2(\PP),\DD)_{\theta,q} } \\
		  & \sim_c \|f(X_1)\|_{L_2 (\PP)} +          
		\left\| (1-t)^{-\frac{\theta}{2}} \! \left\| f(X_1) - \EE \left[ f(X_1) | \ftn_t  \right] 
		\right\|_{L_2(\PP)}  \right\|_{L_q\left((0,1),\bor(0,1),\frac{\ud t}{1-t}\right)} \\
		  & =  \|f(X_1)\|_{L_2 (\PP)} \!  + \! \tfrac1{\sqrt2}\left\| (1-t)^{-\frac{\theta}{2}} \!\left\| \left\| f(X_1) - f(X_t+\bar{X}_{1-t}) 
		\right\|_{L_2(\PP)} \right\|_{L_2(\bar{\PP})}  \right\|_{L_q\left(\! \frac{\ud t}{1-t} \!\right)}\!\!,
	\end{align*}
	where $c\geq 1$ depends only on $(\theta,q)$ and the expressions may be infinite. 
\end{lem}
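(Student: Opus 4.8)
The plan is to deduce Lemma~\ref{lemma:characterization_for_fractional_smoothness} from Lemma~\ref{lemma:thm_geisshujo} by specializing the abstract sequence-space statement to the It\^o chaos decomposition of $f(X_1)$. First I would fix the chaos expansion $f(X_1) = \sum_{n=0}^\infty I_n(f_n)$ with symmetric kernels $f_n$, and set $E_n := L_2(\mm^{\otimes n})$ (with $E_0 = \RR$) and $a_n := \sqrt{n!}\,\tilde f_n \in E_n$. With this identification the norm equalities recalled in Section~\ref{section:chaos} give $\|a\|_{\ell_2(E)}^2 = \sum_n n!\|\tilde f_n\|_{E_n}^2 = \|f(X_1)\|_{L_2(\PP)}^2$ and $\|a\|_{d_{1,2}(E)}^2 = \sum_n (n+1)!\|\tilde f_n\|_{E_n}^2 = \|f(X_1)\|_{\DD}^2$. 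So $\ell_2(E)$ is isometric to $L_2(\PP)$ and $d_{1,2}(E)$ is isometric to $\DD$ under the map $a \mapsto \sum_n I_n(f_n)$, and this isometry is compatible with the couplings, hence $(\ell_2(E), d_{1,2}(E))_{\theta,q}$ is isometric to $(L_2(\PP),\DD)_{\theta,q}$. This reduces the first $\sim_c$ of the lemma to rewriting the quantity $(Ta)(1) - (Ta)(t)$ appearing in the second line of Lemma~\ref{lemma:thm_geisshujo}.

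Next I would compute $(Ta)(t) = \sum_{n=0}^\infty \|a_n\|_{E_n}^2 t^n = \sum_{n=0}^\infty t^n\, n!\,\|\tilde f_n\|_{L_2(\mm^{\otimes n})}^2$. By Lemma~\ref{lemma:chaos_martingale}(b) this is exactly $\EE\big[\EE[f(X_1)\mid \ftn_t]^2\big] = \|\EE[f(X_1)\mid\ftn_t]\|_{L_2(\PP)}^2$, and $(Ta)(1) = \|f(X_1)\|_{L_2(\PP)}^2$. Since $\EE[f(X_1)\mid\ftn_t]$ is the orthogonal projection of $f(X_1)$ onto $L_2(\ftn_t)$, Pythagoras gives $(Ta)(1) - (Ta)(t) = \|f(X_1)\|_{L_2(\PP)}^2 - \|\EE[f(X_1)\mid\ftn_t]\|_{L_2(\PP)}^2 = \|f(X_1) - \EE[f(X_1)\mid\ftn_t]\|_{L_2(\PP)}^2$. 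Substituting $\sqrt{(Ta)(1)-(Ta)(t)} = \|f(X_1) - \EE[f(X_1)\mid\ftn_t]\|_{L_2(\PP)}$ into the second characterization of Lemma~\ref{lemma:thm_geisshujo} and using the isometry yields the first $\sim_c$ claimed here.

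For the final equality I would realize the conditional expectation probabilistically. Using independence and stationarity of increments, $X_1 = X_t + (X_1 - X_t)$ with $X_1 - X_t$ independent of $\ftn_t$ and equal in law to $\bar X_{1-t}$; hence $\EE[f(X_1)\mid\ftn_t] = \bar\EE[f(X_t + \bar X_{1-t})]$ where $\bar\EE$ integrates only over the independent copy. Then I would expand the variance: writing $h(X_t) := \bar\EE[f(X_t+\bar X_{1-t})]$,
\[
	\|f(X_1) - \EE[f(X_1)\mid\ftn_t]\|_{L_2(\PP)}^2 = \EE\big[(f(X_1) - h(X_t))^2\big] = \EE\,\bar\EE\big[(f(X_t+\bar X_{1-t}) - h(X_t))^2\big].
\]
Now I would use the standard identity that for a random variable $W$ (here $f(X_t + \bar X_{1-t})$ with $X_t$ frozen, randomness in $\bar X_{1-t}$) one has $\bar\EE[(W - \bar\EE W)^2] = \tfrac12 \bar\EE\,\bar\EE'[(W - W')^2]$ with an independent copy; concretely, conditioning on $X_t$ and taking a second independent copy $\bar X'_{1-t}$, the variance equals $\tfrac12$ times $\EE$ over $X_t$ of $\bar\EE\,\bar\EE'[(f(X_t+\bar X_{1-t}) - f(X_t+\bar X'_{1-t}))^2]$. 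But $X_t + \bar X_{1-t} \stackrel{d}{=} X_1$ and is independent of $X_t + \bar X'_{1-t} \stackrel{d}{=} X_t + \bar X_{1-t}$ only after the right bookkeeping — cleaner is: rename so that one copy is the genuine $X_1$ decomposed as $X_t + (X_1-X_t)$ and the other replaces the increment by $\bar X_{1-t}$, giving $\|f(X_1)-\EE[f(X_1)\mid\ftn_t]\|_{L_2(\PP)}^2 = \tfrac12\,\EE\,\bar\EE\big[(f(X_1) - f(X_t+\bar X_{1-t}))^2\big]$. Taking square roots produces the factor $\tfrac1{\sqrt2}$ and the nested $L_2(\PP)$-then-$L_2(\bar\PP)$ norms as written, after which the outer $L_q(\ud t/(1-t))$ norm is unchanged.

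The main obstacle is the last step: getting the bookkeeping of the independent copies right so that the $\tfrac12$ (hence $\tfrac1{\sqrt2}$) and the precise order of the norms $\|\,\|\,\cdot\,\|_{L_2(\PP)}\|_{L_2(\bar\PP)}$ come out exactly, rather than merely up to equivalence. Everything else is a transcription: the isometry between the chaos side and the sequence side, and the identification $(Ta)(t) = \|\EE[f(X_1)\mid\ftn_t]\|_{L_2(\PP)}^2$ via Lemma~\ref{lemma:chaos_martingale}(b), are routine once set up. I would also note in passing that the first line (the conditional-expectation form) is itself a genuine equivalence $\sim_c$ inherited verbatim from Lemma~\ref{lemma:thm_geisshujo}, while the passage to the second line is an honest equality, so no constants are lost there.
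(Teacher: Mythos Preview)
Your proposal is correct and follows essentially the same route as the paper: identify the chaos with the sequence space via $a_n=\sqrt{n!}\,\tilde f_n$, match the $K$-functionals (the paper does this explicitly via orthogonality of the chaos, which is the content of your ``isometry compatible with the couplings''), use Lemma~\ref{lemma:chaos_martingale}(b) to identify $(Ta)(t)$, and then derive the $\tfrac12$. The only cosmetic difference is in the last step: the paper avoids introducing a second copy $\bar X'_{1-t}$ by using exchangeability of the pair $(X_1,\,X_t+\bar X_{1-t})$ directly, writing $\|f(X_1)-\EE[f(X_1)\mid\ftn_t]\|_{L_2}^2$ once as $\bar\EE\EE[f(X_1)(f(X_1)-f(X_t+\bar X_{1-t}))]$ and once as $-\bar\EE\EE[f(X_t+\bar X_{1-t})(f(X_1)-f(X_t+\bar X_{1-t}))]$, and averaging; your variance-with-independent-copy identity plus the renaming $(X_t+\bar X_{1-t},X_t+\bar X'_{1-t})\stackrel{d}{=}(X_1,X_t+\bar X_{1-t})$ is the same exchangeability in different packaging.
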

\begin{proof}
	Let $f(X_1) = \sum_{n=0}^\infty I_n(f_n) \in L_2(\PP)$, $E = \left( L_2(\mm^{\otimes n}) \right)_{n=0}^\infty$ 
	and $a=\left( \sqrt{n!} \tilde{f_n} \right)_{n=0}^\infty$. By orthogonality the equality 
	\[
		\sum_{n=0}^\infty I_n(f_n) = \sum_{n=0}^\infty I_n(g_n) + \sum_{n=0}^\infty I_n(h_n)
	\] 
	holds in $L_2(\PP)$ if and only if $\tilde{f_n} = \tilde{g_n} + 
	\tilde{h_n}$ holds $\mm^{\otimes n}$-a.e. Therefore
	\begin{align*}
		  & K(f(X_1),t;L_2(\PP),\DD)                                                                            \\*
		  & = \inf_{ \tilde{f_n} = \tilde{g_n} + \tilde{h_n} } \left( \sqrt{\sum_{n=0}^\infty n! \| \tilde{g_n} 
		\|^2_{ L_2( \mm^{\otimes n} ) }} 
		+ t  \sqrt{\sum_{n=0}^\infty (n+1)! \| \tilde{h_n} \|^2_{ L_2( \mm^{\otimes n} ) }}  \right)\\*
		  & = K(a,t;\ell_2(E),d_{1,2}(E))                                                                     
	\end{align*} 
	and Lemma \ref{lemma:chaos_martingale}(b) gives
	\begin{align*}
		\left\| f(X_1) - \EE \left[ f(X_1) | \ftn_t  \right] \right\|_{L_2(\PP)}^2
		  & = \EE \left[ f(X_1)^2 \right]  - \EE \left[ \EE \left[ f(X_1) | \ftn_t  \right]^2 \right] \\*
		  & = (Ta)(1) - (Ta)(t).                                                                   
	\end{align*}
	The equivalence follows now from Lemma \ref{lemma:thm_geisshujo}. To conclude with the equality below, we use the facts that
	$\EE \left[ f(X_1) | \ftn_t  \right] = \bar{\EE} \left[  f(X_t+ \bar{X}_{1-t}) \right]$ a.s. and
	$X_t + \bar{X}_{1-t} \stackrel{d}{=} X_1$ to get that
	\begin{align*}
		& \left\| f(X_1) - \EE \left[ f(X_1) | \ftn_t  \right] \right\|_{L_2(\PP)}^2\\
		  & = \EE \left[ f(X_1) \big( f(X_1) -  \EE \left[ f(X_1) | \ftn_t  \right] \big) \right] \\
		  & = \bar{\EE}\EE \left[ f(X_1) \big( f(X_1) -  f(X_t+ \bar{X}_{1-t})  \big) \right] \\
		  & = -\bar{\EE}\EE \left[ f(X_t+\bar{X}_{1-t}) \big( f(X_1) -  f(X_t+ \bar{X}_{1-t})  \big) \right] \\
		  & = \frac12\bar{\EE}\EE \left[ \big( f(X_1) -  f(X_t+ \bar{X}_{1-t})  \big)^2 \right],
	\end{align*}
	where the last line is obtained as the average of the two previous lines.
\end{proof}
\begin{lem}\label{lem:index_bound}
	Let $\tilde{X}$ be a {\pink pure jump} L\'evy process with c\`{a}dl\`{a}g paths on some probability space 
	$(\tilde{\Omega},\tilde{\cal F},\tilde{\mathbbm{P}})$. {\pink Let $\tilde{\nu}$ be its L\'evy measure
	and}  $\beta$ be its Blumenthal-Getoor index. 
	Let $t_0>0$  {\pink and define a constant $\kappa$ by letting
	\[
		\kappa = \begin{cases}
		          \int_{\{|x|\leq1\}} x \tilde{\nu}(\ud x), & \text{if }\int_{\{|x|\leq1\}} |x| \tilde{\nu}(\ud x)<\infty\\
		          0, & \text{if }\int_{\{|x|\leq1\}} |x| \tilde{\nu}(\ud x)=\infty.
		         \end{cases}
	\]
	}
	\begin{enumerate} 
	 \item[(a)] For all $\beta'>\beta$ it holds that
		\[
			\int_0^{t_0} \tilde{\PP}\left(\frac{|\tilde{X}_t{\pink + \kappa t}|}{t^{1/\beta'}} > c\right) \frac{\ud t}{t} < \infty \quad \text{ for all } c>0.
		\]
	\item[(b)]  For any $\beta''< \beta$ there exists $c'>0$ such that
		\[
			\int_0^{t_0} \tilde{\PP}\left(\frac{|\tilde{X}_t{\pink + \kappa t}|}{t^{1/\beta''}} > c'\right) \frac{\ud t}{t} = \infty.
		\]
	\item[\pink (c)] \pink It holds that \[
			\int_0^{t_0} \tilde{\PP}\left(|\tilde{X}_t| > c\right) \frac{\ud t}{t} < \infty \quad \text{ for all } c>0.
		   \]
	\end{enumerate}	
\end{lem}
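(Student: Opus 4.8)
My plan is to treat all three parts by truncating the small jumps of $\tilde X$ and estimating everything through the tail $F(\rho):=\tilde\nu(\{|x|>\rho\})$ and the truncated second moment $\phi(\rho):=\int_{\{|x|\le\rho\}}x^2\,\tilde\nu(\ud x)$, using that $m_\xi<\infty$ for every $\xi>\beta$ (and also, unconditionally, for every $\xi\ge2$, since then $|x|^\xi\le x^2$ on $\{|x|\le1\}$). For $\rho\le1$ this yields $F(\rho)\le C_\xi\rho^{-\xi}$ for any $\xi>\beta$ and $\phi(\rho)\le C_\xi\rho^{2-\xi}$ for any $\xi\in(\beta,2)$, while $\phi(\rho)\le\phi(1)<\infty$ and $F(\rho)<\infty$ hold for all $\rho>0$. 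From the L\'evy--It\^o decomposition of $\tilde X$ one writes, for $\rho\le1$,
\[
	\tilde X_t+\kappa t=Z_t^{>\rho}+\bar{Z}_t^{\le\rho}+c_t,
\]
where $Z_t^{>\rho}$ is the raw sum of the jumps of modulus $>\rho$ on $(0,t]$, the compensated sum $\bar{Z}_t^{\le\rho}$ of the jumps of modulus $\le\rho$ has mean zero and variance $t\phi(\rho)$ and is independent of $Z_t^{>\rho}$, and $c_t$ is deterministic with $|c_t|\le t\int_{\{|x|\le1\}}|x|\,\tilde\nu(\ud x)$ when this integral is finite and $|c_t|\le t\int_{\{\rho<|x|\le1\}}|x|\,\tilde\nu(\ud x)\le t\rho^{1-\xi}m_\xi$ otherwise (the latter case forces $\beta\ge1$, so $\xi>1$ and the exponent $1-\xi<0$ is the correct one). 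In every case $c_t$ is negligible at the scales used below.

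For part (a) fix $\beta'>\beta$ and $c>0$, choose $\xi$ with $\beta<\xi<\min\{\beta',2\}$ (the marginal case $\beta=2$, $\beta'>2$ being handled with $\phi(\rho)\le\phi(1)$), and take $\rho=t^{1/\beta'}$. Then $|c_t|\le\tfrac c2 t^{1/\beta'}$ for all small $t$, so on $\{Z_t^{>\rho}=0\}$ the event $\{|\tilde X_t+\kappa t|>ct^{1/\beta'}\}$ forces $|\bar{Z}_t^{\le\rho}|>\tfrac c2 t^{1/\beta'}$; together with $\tilde{\PP}(Z_t^{>\rho}\neq0)\le tF(\rho)$ and Chebyshev's inequality this gives
\[
	\tilde{\PP}\bigl(|\tilde X_t+\kappa t|>ct^{1/\beta'}\bigr)\le tF(\rho)+\tfrac{4}{c^2}t^{-2/\beta'}\,t\phi(\rho)\le C'\,t^{1-\xi/\beta'}
\]
for small $t$. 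Since $\xi<\beta'$, the map $t\mapsto t^{1-\xi/\beta'}$ is integrable against $\ud t/t$ near $0$, and the remaining interval bounded away from $0$ contributes a finite logarithm, proving (a). Part (c) is the same computation with the moving level $t^{1/\beta'}$ replaced by the fixed $\rho=c/4$ and with $\kappa t$ absorbed into the threshold (since $|\kappa t|\to0$): both $\tilde{\PP}(Z_t^{>\rho}\neq0)\le tF(\rho)$ and $\tilde{\PP}(|\bar{Z}_t^{\le\rho}|>c/4)\le\tfrac{16}{c^2}t\phi(\rho)$ are $O(t)$, so $\tilde{\PP}(|\tilde X_t|>c)=O(t)$ and the integral converges.

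For part (b) fix $\beta''<\beta$. The arithmetic backbone is that $m_{\beta''}=\infty$ is equivalent to $\int_0^1 r^{\beta''-1}F(r)\,\ud r=\infty$, so that under the substitution $r=c''t^{1/\beta''}$ one has $\int_0^{t_0}\tilde\nu(\{|x|>c''t^{1/\beta''}\})\,\ud t=\infty$ for every $c''>0$ and every $t_0>0$ (the contribution of $r$ bounded away from $0$ being finite since $F(r)<\infty$). It therefore suffices to find $c',c''>0$ with $\tilde{\PP}(|\tilde X_t+\kappa t|>c't^{1/\beta''})\gtrsim\min\{1,\,t\,\tilde\nu(\{|x|>c''t^{1/\beta''}\})\}$ for small $t$, and then to verify that $\int_0^{t_0}\min\{1,t\tilde\nu(\{|x|>c''t^{1/\beta''}\})\}\,\tfrac{\ud t}{t}=\infty$ whenever $\int_0^{t_0}\tilde\nu(\{|x|>c''t^{1/\beta''}\})\,\ud t=\infty$; this last point is done by splitting $(0,t_0)$ according to whether $t\,\tilde\nu(\{|x|>c''t^{1/\beta''}\})\ge1$ and using $F(\rho)\le C_\xi\rho^{-\xi}$ to control the part where the minimum equals $1$.

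The lower bound on the probability is what I expect to be the main obstacle. A first attempt is a one-big-jump argument: with $\rho=c''t^{1/\beta''}$, condition on exactly one jump of modulus $>\rho$ on $(0,t]$ (which has probability $\asymp\min\{1,tF(\rho)\}$) and, using independence and Chebyshev on an auxiliary scale $r_t\gg t^{1/\beta''}$ at which $t\phi(r_t)\ll r_t^2$ (possible since $\beta''<2$), argue that this single jump governs the order of $\tilde X_t+\kappa t$ up to the negligible drift $c_t$. This works only when $\beta$ is produced by large jumps; when it comes from an accumulation of small ones, the compensated sum $\bar{Z}_t^{\le\rho}$ at scale $\rho\asymp t^{1/\beta''}$ already has standard deviation $\gg t^{1/\beta''}$, so it (rather than any single jump) is what makes $|\tilde X_t+\kappa t|$ large, and one must instead condition on there being no jump above a scale $\rho_t$ with $tF(\rho_t)\asymp1$ and invoke a Paley--Zygmund or Kolmogorov--Rogozin concentration estimate showing that $|\bar{Z}_t^{\le\rho_t}|\gtrsim\sqrt{t\phi(\rho_t)}\gg t^{1/\beta''}$ with probability bounded away from $0$. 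Reconciling these two regimes, choosing the auxiliary scales correctly for an arbitrary L\'evy measure, and keeping $c_t$ under control throughout, is the technical core of (b).
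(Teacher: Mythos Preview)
Your approach is a from-scratch direct estimate via truncation, Chebyshev, and (for the lower bound) one-big-jump/anticoncentration arguments. The paper does something entirely different and much shorter: it quotes two classical results. Blumenthal and Getoor (Theorems~3.1 and~3.3 of \cite{blumenthal-getoor}) give
\[
	\lim_{t\to0}\frac{\tilde X_t+\kappa t}{t^{1/\beta'}}=0\quad\text{a.s.}\qquad\text{and}\qquad
	\limsup_{t\to0}\frac{|\tilde X_t+\kappa t|}{t^{1/\beta''}}=\infty\quad\text{a.s.}
\]
for all $\beta''<\beta<\beta'$, and Khintchine's integral test (\cite{khintchine1939}) says that for a L\'evy process $Y$ and a non-decreasing $u$ with $u(0+)=0$,
\[
	\lim_{t\to0}\frac{Y_t}{u(t)}=0\ \text{a.s.}\quad\Longleftrightarrow\quad
	\int_0^{t_0}\PP\!\left(\frac{|Y_t|}{u(t)}>c\right)\frac{\ud t}{t}<\infty\ \text{for all }c>0.
\]
Parts (a) and (b) follow immediately with $Y_t=\tilde X_t+\kappa t$ and $u(t)=t^{1/\beta'}$, $u(t)=t^{1/\beta''}$ respectively; part (c) follows from (a) with any $\beta'>2$ and $u(t)=t^{1/3}\wedge1$. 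That is the whole proof.

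Your arguments for (a) and (c) are essentially correct and would yield an independent elementary proof, but note a small gap: your stated bound $|c_t|\le t\int_{\{|x|\le1\}}|x|\,\tilde\nu(\ud x)$ in the finite-integral case is not enough when $\beta'\le1$, since then $t\gg t^{1/\beta'}$. You need the sharper bound $|c_t|\le t\int_{\{|x|\le\rho\}}|x|\,\tilde\nu(\ud x)\le t\rho^{1-\xi}m_\xi$ for some $\xi\in(\beta,\min\{\beta',1\})$, which does give $|c_t|=o(t^{1/\beta'})$.

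For (b) you correctly identify that the one-big-jump heuristic can fail and that a separate anticoncentration argument is needed when the small jumps dominate; but as you yourself say, reconciling the two regimes and controlling the drift for a general L\'evy measure is the hard part, and you have only outlined it. What you are in effect proposing is a direct reproof of (one direction of) Khintchine's test together with the Blumenthal--Getoor $\limsup$ theorem; this is feasible but is substantially more work than the paper requires, and the sketch as written is not yet a proof. Unless you specifically want a self-contained argument, citing \cite{blumenthal-getoor} and \cite{khintchine1939} is both shorter and safer.
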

\begin{proof}
	By \cite[Theorems 3.1 and 3.3]{blumenthal-getoor} it holds  {\pink for all $\beta'' < \beta < \beta'$} that 
	\[
	 \lim_{t\to0} \frac{\tilde{X}_t {\pink + \kappa t}}{t^{1/\beta'}} = 0 \text{ a.s. \,\, and \,\,} \limsup_{t\to0} \frac{|\tilde{X}_t{\pink + \kappa t}|}{t^{1/\beta''}} = \infty \text{ a.s.}
	\]
	By a result of Khintchine \cite[Section 2]{khintchine1939}, we have that if $u:(0,t_0)\to(0,\infty)$ is non-decreasing {\pink and 
	$\displaystyle \lim_{t\to 0}u(t) = 0$, then for any L\'evy process $Y$ it holds that }
	\[
		\lim_{t\to0}\frac{{\pink Y_t}}{u(t)} = 0 \text{ a.s. }\quad\text{if and only if}\quad \int_0^{t_0} \PP\left(\frac{|{\pink Y_t}|}{u(t)}>c\right)\frac{\ud t}{t}<\infty \text{ for all }c>0.
	\]
	The claims {\pink (a) and (b)} follow by choosing {\pink $Y_t = \tilde{X}_t + \kappa t$ and} $u(t)=t^{1/\beta'}$ in (a) and $u(t)=t^{1/\beta''}$ in (b). \\
	{\pink (c) 
	Let $u(t) = t^{1/3}\wedge 1$. Then 
	\[
		\lim_{t\to0} \frac{\tilde{X}_t}{u(t)} \leq \lim_{t\to0} \frac{|\tilde{X}_t+ {\pink \kappa } t|}{t^{1/3}} + \frac{|{\pink \kappa } t|}{t^{1/3}} = 0,
	\]
	by (a) so that \cite[Section 2]{khintchine1939} implies that
	\[
		 \int_0^{t_0} \tilde{\PP}\left(|\tilde{X}_t| > c\right) \frac{\ud t}{t}
		 \leq \int_0^{t_0} \tilde{\PP}\left( \frac{|\tilde{X}_t|}{u(t)} > c\right) \frac{\ud t}{t}< \infty \quad \text{ for all } c>0.
	\]}
\end{proof}

\begin{lem} \label{lem:f_index}
	Assume that \ref{A.pos2} holds and let $R>0$, $a<b$, $t_0\in(0,1)$ and $c_1>0$ be such that $p_t(x)\geq c_1$ for all $x\in [a-R,b+R]$ and $t\in[t_0,1]$. 
	If $f:\RR\to\RR$ is Borel measurable and
	there exist $r>0$, $c_2>0$ and $\eta>0$ such that 
	\begin{equation} \label{eq:f_assumption}
		\int_a^b\left[ f(y+x) - f(y) \right]^2 \ud y \geq c_2|x|^\eta  \quad \text{ for all }|x|\leq r,
	\end{equation}
	then
	\[
		f(X_1) \in (L_2(\PP),\DD)_{\theta,\infty} \quad \text{only if} \quad m_{\eta/\theta + \varepsilon} < \infty
	\]
	for all $\varepsilon > 0$. 
\end{lem}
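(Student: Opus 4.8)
The plan is to use the characterization of fractional smoothness from Lemma \ref{lemma:characterization_for_fractional_smoothness} together with the lower bound \eqref{eq:f_assumption} and the Blumenthal--Getoor estimates of Lemma \ref{lem:index_bound}. Suppose $f(X_1)\in(L_2(\PP),\DD)_{\theta,\infty}$. By Lemma \ref{lemma:characterization_for_fractional_smoothness} (with $q=\infty$), this means
\[
	\sup_{t\in(0,1)}(1-t)^{-\theta/2}\,\bar{\EE}\,\EE\bigl[\bigl(f(X_1)-f(X_t+\bar X_{1-t})\bigr)^2\bigr]^{1/2}<\infty,
\]
so there is a constant $C$ with $\bar{\EE}\,\EE[(f(X_1)-f(X_t+\bar X_{1-t}))^2]\le C(1-t)^{\theta}$ for all $t\in(0,1)$. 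The idea is to reparametrize: writing $s=1-t$, and using that $X_t+\bar X_s\stackrel{d}{=}X_1$ with $X_t$ and $\bar X_s$ independent, the quantity $f(X_1)-f(X_t+\bar X_{1-t})$ compares $f$ evaluated at a point and at that point shifted by an independent increment $\bar X_s$ (up to the compensator drift $\kappa s$ if the small jumps are summable). So I would first condition on $X_t$ and on $\bar X_s$, integrate $y$ over $[a,b]$ using the density lower bound $p_t\ge c_1$ on $[a-R,b+R]$ valid for $t\in[t_0,1]$, and then restrict the shift to the event $\{|\bar X_s+\kappa s|\le r\}$ so that \eqref{eq:f_assumption} applies. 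This gives, for $s$ small enough (so that $1-s\ge t_0$ and the shift stays inside $[a-R,b+R]$ on the relevant event),
\[
	C s^{\theta}\ \ge\ c_1 c_2\,\bar{\EE}\bigl[\,|\bar X_s+\kappa s|^{\eta}\,\one_{\{|\bar X_s+\kappa s|\le r\}}\,\bigr].
\]

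Next I would extract from this a bound on the tail probabilities appearing in Lemma \ref{lem:index_bound}. Fix $\eta'<\eta$; for any $\delta>0$,
\[
	\bar{\EE}\bigl[|\bar X_s+\kappa s|^{\eta}\one_{\{|\bar X_s+\kappa s|\le r\}}\bigr]\ \ge\ (\delta s^{1/\eta'})^{\eta}\,\bar{\PP}\bigl(\delta s^{1/\eta'}<|\bar X_s+\kappa s|\le r\bigr),
\]
and by Lemma \ref{lem:index_bound}(c) the contribution of the event $\{|\bar X_s+\kappa s|>r\}$ to $\int_0^{t_0}\bar{\PP}(|\bar X_s+\kappa s|>\delta s^{1/\eta'})\,\frac{\ud s}{s}$ is finite (after absorbing the bounded drift $\kappa s$), so up to a finite additive term,
\[
	\int_0^{t_0}\bar{\PP}\Bigl(\tfrac{|\bar X_s+\kappa s|}{s^{1/\eta'}}>\delta\Bigr)\frac{\ud s}{s}\ \lesssim\ \int_0^{t_0}\frac{C s^{\theta}}{c_1 c_2\,\delta^{\eta} s^{\eta/\eta'}}\,\frac{\ud s}{s}\ =\ \frac{C}{c_1 c_2\,\delta^{\eta}}\int_0^{t_0}s^{\theta-\eta/\eta'-1}\,\ud s.
\]
This last integral converges precisely when $\theta>\eta/\eta'$, i.e. $\eta'>\eta/\theta$. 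Hence for every $\eta'>\eta/\theta$ the integral $\int_0^{t_0}\bar{\PP}(|\bar X_s+\kappa s|>\delta s^{1/\eta'})\,\frac{\ud s}{s}$ is finite for all $\delta>0$.

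Finally I would invoke Lemma \ref{lem:index_bound}(b): if the Blumenthal--Getoor index $\beta$ of $X$ (equivalently of $\bar X$) satisfied $\beta>\eta'$ for some such $\eta'>\eta/\theta$, then choosing $\beta''$ with $\eta'<\beta''<\beta$ would force the corresponding integral to diverge, contradicting the finiteness just established applied with $\eta'$ replaced by a value $\le\beta''$ — more carefully, taking $\eta'\in(\eta/\theta,\beta)$ directly contradicts (b). Therefore $\beta\le\eta/\theta$, which by the definition $\beta=\inf\{\xi:m_\xi<\infty\}$ yields $m_{\eta/\theta+\varepsilon}<\infty$ for every $\varepsilon>0$. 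The main obstacle I anticipate is the bookkeeping in the first displayed reduction: one must be careful that the conditioning argument isolates an increment over a time interval of length $s=1-t\to0$ (not $t$), that the density lower bound is used at the correct time $t=1-s\in[t_0,1]$, and that the shift $X_t+\bar X_s-y$ together with the range of $y\in[a,b]$ and the restriction $|\bar X_s+\kappa s|\le r$ keeps all evaluation points within the interval where \eqref{eq:f_assumption} and the density bound are valid; the compensator drift $\kappa s$ must be carried along so that Lemma \ref{lem:index_bound} applies to the centered process $\bar X_s+\kappa s$.
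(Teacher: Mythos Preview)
Your overall strategy is correct, but the reduction step misidentifies the shift. After conditioning on $X_t$ and writing $X_1=X_t+(X_1-X_t)$, the difference is
\[
f\bigl(X_t+(X_1-X_t)\bigr)-f\bigl(X_t+\bar X_{1-t}\bigr),
\]
and once you integrate out $X_t$ via its density and change variables $y\mapsto y-\bar X_{1-t}$, the shift that enters \eqref{eq:f_assumption} is $(X_1-X_t)-\bar X_{1-t}$, which in distribution is the increment of the \emph{symmetrized} process $\tilde X=X-\bar X$. It is not $\bar X_{1-t}$ alone, and there is no natural appearance of $\bar X_s+\kappa s$: the $\kappa$ from Lemma~\ref{lem:index_bound} is an artifact you have inserted, and for $\tilde X$ it vanishes because $\tilde\nu(B)=\nu(B)+\nu(-B)$ is symmetric. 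The paper makes exactly this move, obtaining
\[
\bar\EE\,\EE\bigl[(f(X_1)-f(X_t+\bar X_{1-t}))^2\bigr]\ \ge\ c_1c_2\,\bar\EE\,\EE\!\left[|\tilde X_{1-t}|^{\eta}\one_{\{|\tilde X_{1-t}|\le r,\ |\bar X_{1-t}|\le R\}}\right],
\]
and then applies Lemma~\ref{lem:index_bound}(b) to $\tilde X$ with $\kappa=0$. Note also that the indicator $\{|\bar X_{1-t}|\le R\}$ has to be kept so that $p_t(y-\bar X_{1-t})\ge c_1$ is valid for $y\in[a,b]$; its contribution to the Khintchine integral is removed via Lemma~\ref{lem:index_bound}(c).

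Two smaller points. You write ``Fix $\eta'<\eta$'', but your own convergence criterion $\theta>\eta/\eta'$ forces $\eta'>\eta/\theta>\eta$; this is the direction you actually need. Second, the paper avoids your detour through the pointwise bound $\le C(1-t)^{\theta}$ followed by integration: it first passes to $(L_2(\PP),\DD)_{\theta',2}$ for $\theta'<\theta$ via \eqref{eq:lexicographical_order} and uses the $q=2$ case of Lemma~\ref{lemma:characterization_for_fractional_smoothness}, which already contains the integral $\int\cdot\,\tfrac{\ud t}{1-t}$ needed for Khintchine's criterion. This is equivalent to your route (with the symmetrization fixed) but a little more direct.
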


\begin{proof} The assumptions \ref{A.pos2} and \eqref{eq:f_assumption} yield for $t\in[t_0,1]$ that
	\begin{align} \label{align:lower_bound}
		& \left\| \left\| f(X_1) - f(X_t+\bar{X}_{1-t}) \right\|_{L_2(\PP)} \right\|_{L_2(\bar{\PP})}^2 \nonumber \\
		& = \bar{\EE}\EE \left[ \int_\RR \left( f(y+X_1-X_t) - f(y+\bar{X}_{1-t})   \right)^2 p_t(y)\ud y \right] \nonumber  \\
		& = \bar{\EE}\EE \left[ \int_\RR \left( f(y+X_{1-t}-\bar{X}_{1-t}) - f(y)   \right)^2 p_t(y-\bar{X}_{1-t})\ud y \right] \nonumber \\
		& \geq \bar{\EE}\EE \left[ \int_a^b \left( f(y+X_{1-t}-\bar{X}_{1-t}) - f(y)   \right)^2 c_1 \ud y \one_{\{ |\bar{X}_{1-t}|\leq R \}} \right] \nonumber \\
		& \geq c_1c_2 \bar{\EE}\EE \left[ |X_{1-t}-\bar{X}_{1-t}|^{\eta}\one_{\{|X_{1-t}-\bar{X}_{1-t}|\leq r, |\bar{X}_{1-t}|\leq R\}} \right].
	\end{align}
	Since $X$ and $\bar{X}$ are independent, the process $\tilde{X}=X-\bar{X}$ with $\tilde{X}_t(\omega,\bar{\omega}) = X_t(\omega)-\bar{X}_t(\bar{\omega})$ 
	is a L\'evy process on $(\Omega\times\bar{\Omega}, \ftn\otimes\bar{\ftn}, \PP\otimes\bar{\PP})$ 
	with L\'evy measure $\tilde{\nu}(B) = \nu(B)+\nu(-B)$, and its 
        Blumenthal-Getoor index is the same $\beta$ as for $X$. 
	Let $0<\theta' < \theta$ and $c>0$ and set $c_3 = c_1c_2c^\eta$. Then \eqref{align:lower_bound} has the lower bound
	\begin{align*}
		&  c_3 (1-t)^{\theta'}(\PP\otimes \bar{\PP})\left( \frac{|X_{1-t}-\bar{X}_{1-t}|^{\eta}}{(1-t)^{\theta'}c^\eta} > 1, 
		                                             |X_{1-t}-\bar{X}_{1-t}|\leq r, |\bar{X}_{1-t}|\leq R\right)\\
		& \geq c_3(1-t)^{\theta'}\left[  \tilde\PP \left( \frac{|\tilde{X}_{1-t}|}{(1-t)^{\theta'/\eta}} > c \right) 
		                             - \tilde\PP\left( |\tilde{X}_{1-t}|> r\right) - \bar\PP(|\bar{X}_{1-t}| > R) \right].
	\end{align*}
	 If $f(X_1)\in(L_2(\PP),\DD)_{\theta,\infty}$, then $f(X_1)\in(L_2(\PP),\DD)_{\theta',2}$ by \eqref{eq:lexicographical_order}. 
	 Using Lemma \ref{lemma:thm_geisshujo} we get that 
	\begin{align*}  
	 \infty
	 & > \int_0^1 (1-t)^{-\theta'} \left\| \left\| f(X_1) - f(X_t+\bar{X}_{1-t}) \right\|_{L_2(\PP)} \right\|_{L_2(\bar{\PP})}^2 \frac{\ud t}{1-t}\\
	 & \geq c_3\int_{{\pink 1-t_0}}^1 \left[  \tilde{\PP} \left( \frac{|\tilde{X}_{1-t}|}{(1-t)^{\theta'/\eta}} > c \right) 
	                                      - \tilde\PP\left( |\tilde{X}_{1-t}|> r\right) - \bar\PP(|\bar{X}_{1-t}| > R) \right] \frac{\ud t}{1-t}\\
	 & = c_3 \left[\int_0^{t_0} \tilde{\PP} \left( \frac{|\tilde{X}_t|}{t^{\theta'/\eta}} > c\right) \frac{\ud t}{t} 
	     - \int_0^{t_0} \tilde\PP(|\tilde{X}_t|>r) \frac{\ud t}{t} - \int_0^{t_0} \bar\PP(|\bar{X}_t|>R) \frac{\ud t}{t} \right],
	\end{align*}
	where {\pink
	\[
		\int_0^{t_0} \tilde\PP(|\tilde{X}_t|>r) \frac{\ud t}{t} 
		+ \int_0^{t_0} \bar\PP(|\bar{X}_t|>R) \frac{\ud t}{t}
		< \infty
	\]
        by Lemma \ref{lem:index_bound}(c).} 
        Hence
        \[
         \int_0^{t_0} \tilde{\PP} \left( \frac{|\tilde{X}_t|}{t^{\theta'/\eta}} > c\right) \frac{\ud t}{t}  < \infty \quad \text{for all }c>0 \text{ and for all }0<\theta'<\theta.
        \]
	{\pink Since $\tilde{\nu}$ is symmetric, the constant $\kappa$ of Lemma \ref{lem:index_bound} is zero and} Lemma \ref{lem:index_bound}(b) 
	implies $\beta \leq \eta/\theta'$ for all $0<\theta'<\theta$, 
	so that $\beta\leq \eta/\theta$.
\end{proof}

\begin{proof}[Proof of Theorem \ref{th:Holder_fractional}(b)]
	By Lemma \ref{lem:ciesielski_function}, the function $g^{\alpha,\ell}$ satisfies \eqref{eq:f_assumption}, when we choose $[a,b] = [k2^{-\ell},(k+1)2^{-\ell}]$, 
	$r=2^{-\ell-3}$, $c_2=2^{-\ell}2^{8\alpha-10}$ and $\eta = 2\alpha$.
	If $g^{\alpha,\ell}(X_1)\in(L_2(\PP),\DD)_{\theta,\infty}$, then by Lemma \ref{lem:f_index} it holds that $\beta\leq2\alpha/\theta$. 
\end{proof}
\begin{proof}[Proof of Theorem \ref{th:BV_fractional}(b)]
	We have that
	\begin{align} \label{align:K_lower}
		&  \int_{K-r}^{K+r} \left( \one_{[K,\infty)}(y+x) - \one_{[K,\infty)}(y)  \right)^2 \ud y \nonumber\\
		& = \int_{K-r}^{K+r} \left( \one_{[K-x,K)}(y)\one_{(0,\infty)}(x) + \one_{[K,K-x)}(y)\one_{(-\infty,0)}(x)  \right) \ud y \nonumber\\
		& = |x|
	\end{align}
        for all $|x|\leq r$, so that $\one_{[K,\infty)}$ satisfies \eqref{eq:f_assumption} with $[a,b]=[K-r, K + r]$.
	Choosing $R=r$ it now follows from Lemma \ref{lem:f_index}, that if $\one_{[K,\infty)}(X_1)\in(L_2(\PP),\DD)_{\theta,\infty}$
	then $\beta\leq 1/\theta$. 
\end{proof}
\begin{rmk}
	\begin{enumerate}
	\item[(a)]If $m_\beta < \infty$ and \ref{A.pos2} holds, then we get for $0<\alpha\leq\theta<1$ from Theorem \ref{th:Holder_fractional}  
	the ''if and only if''-condition
	\[
		m_{2\alpha/\theta}<\infty \, \Longleftrightarrow \, f(X_1)\in (L_2(\PP),\DD)_{\theta,\infty} \,\forall f\in C_b^\alpha
	\] 
	and if also \ref{A.bdd} holds, then Theorem \ref{th:BV_fractional} implies for $1/2 \leq \theta < 1$ that
	\[
		m_{1/\theta}<\infty \, \Longleftrightarrow \, f(X_1)\in (L_2(\PP),\DD)_{\theta,\infty} \,\forall f\in NBV.
	\] 
	Note that $m_\beta<\infty$ is indeed possible:
	choose for example
	\[
		\nu(\ud x) = \frac{b}{|x|^{1+\beta}(\log^2 x+1)}\ud x \quad \text{ for some }b>0
	\]
	for $\beta\in(0,2]$. Using Lemma \ref{lem:assumptions} we see that this process satisfies \ref{A.bdd}-\ref{A.pos2}.
	\item[(b)] If $m_\beta =\infty$, then Theorems \ref{th:Holder_fractional} and \ref{th:BV_fractional} 
	do not give an ''if and only if''-result in general: 
	In Theorems \ref{th:stable_H}-\ref{th:stable_K} in Section \ref{section:stable} we consider the symmetric strictly stable process with
	\[
		\nu(\ud x) = \frac{b}{|x|^{1+\beta}}\ud x \quad \text{ for some }b>0 \text{ and }\beta\in(0,1),
	\]
	and the process satisfies \ref{A.bdd}-\ref{A.pos2} by Lemma \ref{lem:assumptions}. Theorems \ref{th:stable_H} and \ref{th:stable_f} show
	that when $0<\alpha<\theta<1$, then
	\[
		f(X_1)\in (L_2(\PP),\DD)_{\theta,\infty}\, \forall f\in C_b^\alpha \quad \text{ for }2\alpha/\theta = \beta, 
	\] 
	and that for $\tfrac12 \leq \theta < 1$ it holds that
	\[
		f(X_1)\in (L_2(\PP),\DD)_{\theta,\infty}\, \forall f\in NBV \quad \text{ for }1/\theta = \beta, 
	\]
	eventhough $m_\beta = \infty$. However, we obtain for $0<\alpha<\theta<1$ from Theorem \ref{th:stable_H}, that
	\begin{align*}
		  m_{2\alpha/\theta}<\infty \, 
		& \Longleftrightarrow \, f(X_1)\in (L_2(\PP),\DD)_{\theta,q} \,\forall f\in C_b^\alpha \text{ for some }q\in[1,\infty)\\
		& \Longleftrightarrow \, f(X_1)\in (L_2(\PP),\DD)_{\theta,q} \,\forall f\in C_b^\alpha \text{ for all }q\in[1,\infty).
	\end{align*} 
	Theorems \ref{th:stable_f} and \ref{th:stable_K} imply for $0<\theta<1$ that
	\begin{align*}
		  m_{1/\theta}<\infty \, 
		& \Longleftrightarrow \, f(X_1)\in (L_2(\PP),\DD)_{\theta,q} \,\forall f\in NBV \text{ for some }q\in[1,\infty)\\
		& \Longleftrightarrow \, f(X_1)\in (L_2(\PP),\DD)_{\theta,q} \,\forall f\in NBV \text{ for all }q\in[1,\infty).
	\end{align*} 
	\end{enumerate}
\end{rmk}


\subsection{Symmetric strictly stable process}\label{section:stable}

We consider the symmetric strictly stable process which has the characteristic function
$\varphi(u) = e^{-c |u|^\beta}$ for some $c>0$ and $\beta\in(0,2]$ (\cite[Theorem 14.14]{sato}).   If $\beta=2$, then the process is 
the Brownian motion $\sqrt{2c} B$, and otherwise it is a pure jump L\'evy process $X$ with L\'evy measure 
\[
	\nu(\ud x) = b|x|^{-\beta-1}\ud x \quad \text{for some } b>0,
\]
where $\beta$ is the Blumenthal-Getoor index of the process.
We will later take advantage of the property that $X_t \stackrel{d}{=} t^{1/\beta}X_1$, which follows from
\[
 \EE\left[e^{iuX_t}\right] =  e^{- tc|u|^\beta} = e^{-c \left|ut^{1/\beta}\right|^\beta} 
     = \EE\left[ e^{iut^{1/\beta}X_1}\right].
\]
Using  Lemma \ref{lem:assumptions} one can easily check that assumptions \ref{A.bdd}, \ref{A.pos}  and \ref{A.pos2} are satisfied.
For the rest of this section we assume that $X$ is the symmetric and strictly stable process of index $\beta\in(0,2)$.

\begin{lem} \label{lem:stable_lowerBound}
	Let $a<b$ and $t_0\in(0,1)$.
	If $f:\RR\to\RR$ is Borel measurable and
	there exist $r>0$, $c_{\pink 2}>0$ and $\eta>0$ such that \eqref{eq:f_assumption} holds,
	then there exists $c>0$ such that
	\[
		 \left\| \left\| f(X_1) -f(X_t+\bar{X}_{1-t})   \right\|_{L_2(\PP)} \right\|^2_{L_2(\bar{\PP})} \geq c(1-t)^{\eta/\beta} \quad\text{for all }t\in[t_0,1].
	\]	
\end{lem}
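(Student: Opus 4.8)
The plan is to reuse, essentially verbatim, the chain of identities and estimates from the proof of Lemma~\ref{lem:f_index} up to \eqref{align:lower_bound}, and then to replace the Blumenthal--Getoor asymptotics invoked there by the exact self-similarity of the stable process.

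First, the same computation that produces \eqref{align:lower_bound} applies here: writing $X_{1-t}$ for the increment $X_1-X_t$ (which is independent of $\bar{X}_{1-t}$) and changing variables $y\mapsto y-\bar{X}_{1-t}$ in the inner integral gives
\[
	\left\| \left\| f(X_1) -f(X_t+\bar{X}_{1-t}) \right\|_{L_2(\PP)} \right\|^2_{L_2(\bar{\PP})}
	= \bar{\EE}\,\EE\left[ \int_\RR \big( f(y+X_{1-t}-\bar{X}_{1-t}) - f(y) \big)^2 p_t(y-\bar{X}_{1-t})\,\ud y \right].
\]
The only input of \ref{A.pos2} used afterwards is a lower bound $p_t(x)\geq c_1>0$ valid for all $t\in[t_0,1]$ and all $x$ in some bounded interval; for the symmetric strictly stable process this holds on \emph{any} bounded interval, since $p_t(x)=t^{-1/\beta}p_1(t^{-1/\beta}x)$ with $p_1$ continuous and strictly positive on $\RR$ (positivity of the stable density on all of $\RR$; cf.\ the proof of Lemma~\ref{lem:assumptions}). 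Fixing $R>0$ and the corresponding $c_1>0$, restricting the inner integral to $y\in[a,b]$ and to $\{|\bar{X}_{1-t}|\leq R\}$, and invoking \eqref{eq:f_assumption} on $\{|X_{1-t}-\bar{X}_{1-t}|\leq r\}$ yields, exactly as in \eqref{align:lower_bound},
\[
	\left\| \left\| f(X_1) -f(X_t+\bar{X}_{1-t}) \right\|_{L_2(\PP)} \right\|^2_{L_2(\bar{\PP})}
	\;\geq\; c_1c_2\,\bar{\EE}\,\EE\left[ |\tilde{X}_{1-t}|^{\eta}\,\one_{\{|\tilde{X}_{1-t}|\leq r,\ |\bar{X}_{1-t}|\leq R\}} \right],
\]
where $\tilde{X}=X-\bar{X}$.

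For the second step I would invoke self-similarity: since $X$ and $\bar{X}$ are independent symmetric strictly stable processes of index $\beta$, the pair $(\tilde{X}_s,\bar{X}_s)$ has the same law as $s^{1/\beta}(\tilde{X}_1,\bar{X}_1)$ for every $s>0$. Taking $s=1-t$,
\[
	\bar{\EE}\,\EE\left[ |\tilde{X}_{1-t}|^{\eta}\,\one_{\{|\tilde{X}_{1-t}|\leq r,\ |\bar{X}_{1-t}|\leq R\}} \right]
	= (1-t)^{\eta/\beta}\,\bar{\EE}\,\EE\left[ |\tilde{X}_1|^{\eta}\,\one_{\{|\tilde{X}_1|\leq r(1-t)^{-1/\beta},\ |\bar{X}_1|\leq R(1-t)^{-1/\beta}\}} \right].
\]
For $t\in[t_0,1]$ one has $(1-t)^{-1/\beta}\geq(1-t_0)^{-1/\beta}\geq 1$, so the truncation event contains the fixed event $\{|\tilde{X}_1|\leq r,\ |\bar{X}_1|\leq R\}$, whence the last expectation is bounded below by $c_3:=\bar{\EE}\,\EE[ |\tilde{X}_1|^{\eta}\,\one_{\{|\tilde{X}_1|\leq r,\ |\bar{X}_1|\leq R\}} ]$. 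Finally $c_3>0$, because $(\tilde{X}_1,\bar{X}_1)=(X_1-\bar{X}_1,\bar{X}_1)$ has the strictly positive joint density $(\tilde x,\bar x)\mapsto p_1(\tilde x+\bar x)p_1(\bar x)$, so $(\PP\otimes\bar{\PP})(\delta\leq|\tilde{X}_1|\leq r,\ |\bar{X}_1|\leq R)>0$ for small $\delta>0$ and hence $c_3\geq\delta^{\eta}(\PP\otimes\bar{\PP})(\delta\leq|\tilde{X}_1|\leq r,\ |\bar{X}_1|\leq R)>0$. Setting $c=c_1c_2c_3$ finishes the proof.

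I do not anticipate a serious obstacle: this is the ``reverse direction'' of the estimate in Lemma~\ref{lem:f_index}, and the points needing care are only (i) the change of variables over the two independent probability spaces (already carried out in Lemma~\ref{lem:f_index}), (ii) the joint scaling identity for $(\tilde{X}_s,\bar{X}_s)$, and (iii) the strict positivity $c_3>0$. The reason one obtains a clean pointwise-in-$t$ lower bound here, rather than only an integrated statement, is that on $[t_0,1]$ the scaling factor $(1-t)^{-1/\beta}$ stays bounded below by $1$, so shrinking the scaled truncation event to a fixed one loses nothing.
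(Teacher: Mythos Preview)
Your proposal is correct and follows essentially the same route as the paper: establish a uniform lower bound $p_t\geq c_1$ on $[a-R,b+R]$ for $t\in[t_0,1]$ via the scaling identity $p_t(x)=t^{-1/\beta}p_1(t^{-1/\beta}x)$ and the strict positivity of $p_1$, invoke \eqref{align:lower_bound}, apply the joint self-similarity $(X_{1-t}-\bar{X}_{1-t},\bar{X}_{1-t})\stackrel{d}{=}(1-t)^{1/\beta}(X_1-\bar{X}_1,\bar{X}_1)$, and shrink the scaled truncation event to a fixed one to extract the constant. The only cosmetic difference is that the paper argues $c_3>0$ by noting $X_1-\bar{X}_1$ is strictly stable, whereas you use the explicit joint density $p_1(\tilde x+\bar x)p_1(\bar x)$; both are fine.
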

\begin{proof}
	Let $R>0$. Since $X_1$ has the support $\RR$ by \cite[Theorem 24.10(ii)]{sato}, then $p_1$ is strictly positive and continuous on $\RR$
	by the proof of Lemma \ref{lem:assumptions}. Hence we find $c_1>0$ such that $p_1(x)\geq c_1$ for all $-|a-R|t_0^{-1/\beta}\leq x \leq |b+R|t_0^{-1\beta}$.
	 Using the fact that $X_t\stackrel{d}{=}t^{1/\beta}X_1$, we obtain for any $x\in[a-R,b+R]$ that
	\[
		p_t(x) =  t^{-1/\beta}p_1(t^{-1/\beta}x)
		\geq  p_1(t^{-1/\beta}x) \geq c_1 
	\] 
	for all $t\in[t_0,1]$. 
	Using \eqref{align:lower_bound} we get that
	\begin{align*} 
		& \left\| \left\| f(X_1) - f(X_t+\bar{X}_{1-t}) \right\|_{L_2(\PP)} \right\|_{L_2(\bar{\PP})}^2 \nonumber \\
		& \geq c_1{\pink c_2} \bar{\EE}\EE \left[ |X_{1-t}-\bar{X}_{1-t}|^{\eta}\one_{\{|X_{1-t}-\bar{X}_{1-t}|\leq r, |\bar{X}_{1-t}|\leq R\}} \right] \\
		& = c_1{\pink c_2} (1-t)^{\eta/\beta}\bar{\EE}\EE \left[ |X_1-\bar{X}_1|^{\eta}\one_{\{|X_1-\bar{X}_1|\leq r(1-t)^{-1/\beta}, |\bar{X}_1|\leq R(1-t)^{-1/\beta}\}} \right] \\
		& \geq c_1{\pink c_2} (1-t)^{\eta/\beta}\bar{\EE}\EE \left[ |X_1-\bar{X}_1|^{\eta}\one_{\{|X_1-\bar{X}_1|\leq r, |\bar{X}_1|\leq R\}} \right]\\
		&\geq c(1-t)^{\eta/\beta}
	\end{align*}
	for some $c>0$, where we used the fact that since $X_1-\bar{X}_1$ is strictly stable with L\'evy measure $2\nu$, it must be that
	$\bar{\EE}\EE \left[ |X_1-\bar{X}_1|\one_{\{|X_1-\bar{X}_1|\leq r, |\bar{X}_1|\leq R\}} \right]$ is strictly positive.
\end{proof}

\begin{thm} \label{th:stable_H}
	Let $0<\alpha<\theta<1$ and assume that $f\in C_b^\alpha$.
	\begin{enumerate}
	 \item[(a)] It holds that $f(X_1) \in (L_2(\PP),\DD)_{\theta,\infty}$, if $\beta \leq 2\alpha/\theta$.
	 \item[(b)] Let $q\in[1,\infty)$ and $\ell\in\{0,1,2,\ldots\}$. 
	            For the function $g^{\alpha,\ell}\in C_b^\alpha$ from Lemma \ref{lem:ciesielski_function} we have that
		\begin{enumerate}
	            \item[(i)] 
	            \[
			   g^{\alpha,\ell}(X_1) \in (L_2(\PP),\DD)_{\theta,q}\quad \text{if and only if}\quad  \beta < 2\alpha/\theta
	            \] and
	           \item[(ii)] 
		    \[
			     g^{\alpha,\ell}(X_1) \in (L_2(\PP),\DD)_{\theta,\infty}\quad \text{if and only if}\quad  \beta \leq 2\alpha/\theta.
	            \]
	        \end{enumerate}
	\end{enumerate}
\end{thm}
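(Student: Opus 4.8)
The plan is to reduce part (a) to Theorem \ref{th:Holder_fractional}(a) whenever $\beta<2\alpha/\theta$, to treat the borderline exponent $\beta=2\alpha/\theta$ directly via Lemma \ref{lemma:characterization_for_fractional_smoothness}, and then to deduce part (b) by combining part (a), the lower bound of Lemma \ref{lem:stable_lowerBound}, the lexicographical ordering \eqref{eq:lexicographical_order}, and again Lemma \ref{lemma:characterization_for_fractional_smoothness}, using throughout the self-similarity $X_s\stackrel{d}{=}s^{1/\beta}X_1$.

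\emph{Part (a).} If $\beta<2\alpha/\theta$, then $m_{2\alpha/\theta}<\infty$ and Theorem \ref{th:Holder_fractional}(a) gives the claim. Assume now $\beta=2\alpha/\theta$, so $m_{2\alpha/\theta}=m_\beta=\infty$. Writing $X_1=X_t+(X_1-X_t)$ with $X_1-X_t$ independent of $\ftn_t$ and using $f\in C_b^\alpha$,
\[
	\big(f(X_1)-f(X_t+\bar X_{1-t})\big)^2\le 4\|f\|_{C_b^\alpha}^2\Big(\big|(X_1-X_t)-\bar X_{1-t}\big|^{2\alpha}\wedge 1\Big).
\]
Under $\PP\otimes\bar\PP$ the variable $(X_1-X_t)-\bar X_{1-t}$ has the law of $\tilde X_{1-t}$, where $\tilde X=X-\bar X$ is strictly stable of index $\beta$ with L\'evy measure $2\nu$, hence $(X_1-X_t)-\bar X_{1-t}\stackrel{d}{=}(1-t)^{1/\beta}\tilde X_1$. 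Therefore
\[
	\bar\EE\EE\big[(f(X_1)-f(X_t+\bar X_{1-t}))^2\big]\le 4\|f\|_{C_b^\alpha}^2(1-t)^{2\alpha/\beta}\,\EE\big[|\tilde X_1|^{2\alpha}\big],
\]
and $\EE[|\tilde X_1|^{2\alpha}]<\infty$ since $2\alpha=\beta\theta<\beta$. Inserting this into Lemma \ref{lemma:characterization_for_fractional_smoothness},
\[
	(1-t)^{-\theta/2}\big\|f(X_1)-\EE[f(X_1)\,|\,\ftn_t]\big\|_{L_2(\PP)}\le \sqrt2\,\|f\|_{C_b^\alpha}\sqrt{\EE[|\tilde X_1|^{2\alpha}]}\,(1-t)^{\alpha/\beta-\theta/2},
\]
and for $\beta=2\alpha/\theta$ the exponent $\alpha/\beta-\theta/2$ vanishes, so the supremum over $t\in(0,1)$ is finite and $f(X_1)\in(L_2(\PP),\DD)_{\theta,\infty}$.

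\emph{Part (b).} Since $g^{\alpha,\ell}\in C_b^\alpha$, the ``if'' direction of (ii) is part (a). For the ``if'' direction of (i) with $\beta<2\alpha/\theta$, choose $\theta'$ with $\theta<\theta'<\min(1,2\alpha/\beta)$ (possible since $\beta<2\alpha/\theta$ means $\theta<2\alpha/\beta$); then $m_{2\alpha/\theta'}<\infty$, so Theorem \ref{th:Holder_fractional}(a) gives $g^{\alpha,\ell}(X_1)\in(L_2(\PP),\DD)_{\theta',\infty}$, and \eqref{eq:lexicographical_order} yields $(L_2(\PP),\DD)_{\theta',\infty}\subseteq(L_2(\PP),\DD)_{\theta,q}$. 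For the ``only if'' directions, Lemma \ref{lem:ciesielski_function} shows $g^{\alpha,\ell}$ satisfies \eqref{eq:f_assumption} with $[a,b]=[k2^{-\ell},(k+1)2^{-\ell}]$, $r=2^{-\ell-3}$ and $\eta=2\alpha$, so Lemma \ref{lem:stable_lowerBound} provides $c_0>0$ and $t_0\in(0,1)$ with $\big\|\,\|g^{\alpha,\ell}(X_1)-g^{\alpha,\ell}(X_t+\bar X_{1-t})\|_{L_2(\PP)}\big\|_{L_2(\bar\PP)}^2\ge c_0(1-t)^{2\alpha/\beta}$ for $t\in[t_0,1]$. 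Restricting the $L_q\!\left((0,1),\tfrac{\ud t}{1-t}\right)$-norm in Lemma \ref{lemma:characterization_for_fractional_smoothness} to $[t_0,1]$ shows that the $(L_2(\PP),\DD)_{\theta,q}$-norm of $g^{\alpha,\ell}(X_1)$ dominates, up to a positive constant, $\big(\int_{t_0}^1(1-t)^{q(\alpha/\beta-\theta/2)-1}\,\ud t\big)^{1/q}$ when $q<\infty$ and $\sup_{t\in[t_0,1)}(1-t)^{\alpha/\beta-\theta/2}$ when $q=\infty$. The former is infinite precisely when $\alpha/\beta-\theta/2\le 0$, i.e.\ $\beta\ge 2\alpha/\theta$, and the latter precisely when $\alpha/\beta-\theta/2<0$, i.e.\ $\beta>2\alpha/\theta$; the contrapositives are the ``only if'' directions of (i) and (ii).

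The step I expect to be the main obstacle is the borderline estimate in (a): one must identify the law of $(X_1-X_t)-\bar X_{1-t}$ under $\PP\otimes\bar\PP$, use self-similarity to extract the exact power $(1-t)^{2\alpha/\beta}$, and check that the $2\alpha$-th moment of the symmetrised stable variable is finite, which relies on the strict inequality $2\alpha=\beta\theta<\beta$. Once the sharp decay $(1-t)^{\alpha/\beta-\theta/2}$ is established, matching the smoothness index with the Blumenthal-Getoor index in every part is routine bookkeeping with Lemmas \ref{lem:stable_lowerBound} and \ref{lemma:characterization_for_fractional_smoothness} and with interpolation.
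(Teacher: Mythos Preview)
Your proof is correct and follows essentially the same route as the paper: reduce to Theorem \ref{th:Holder_fractional}(a) when possible, use H\"older continuity plus self-similarity of $\tilde X=X-\bar X$ together with the moment bound $\EE[|\tilde X_1|^{2\alpha}]<\infty$ (valid since $2\alpha<\beta$) to get the sharp upper bound $C(1-t)^{2\alpha/\beta}$, and combine Lemmas \ref{lem:ciesielski_function}, \ref{lem:stable_lowerBound} and \ref{lemma:characterization_for_fractional_smoothness} for the matching lower bound. The only cosmetic difference is your case split in (a): you separate $\beta<2\alpha/\theta$ from the borderline $\beta=2\alpha/\theta$, whereas the paper splits at $\beta\le 2\alpha$ versus $\beta>2\alpha$ and runs the self-similarity argument over the whole range $2\alpha<\beta\le 2\alpha/\theta$; both partitions cover the hypothesis and lead to the same estimate.
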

\begin{proof}
	(a) {\pink If $\beta\leq 2\alpha$, then $m_{2\alpha/\theta}<\infty$ and the claim follows from Theorem \ref{th:Holder_fractional}(a). Assume now that $\beta>2\alpha$. }  We have
	\begin{align*}
		 \left\| \left\| f(X_1) -f(X_t+\bar{X}_{1-t})   \right\|_{L_2(\PP)} \right\|^2_{L_2(\bar{\PP})}
		& \leq 2\bar{\EE}\EE \left[ |X_{1-t}-\bar{X}_{1-t}|^{2\alpha}\|f\|^2_{C_b^\alpha}  \right] \\
		& \leq 2\bar{\EE} \EE \left[ |(1-t)^{1/\beta}(X_1-\bar{X}_1)|^{2\alpha}\|f\|^2_{C_b^\alpha}  \right] \\
		& \leq 2(1-t)^{2\alpha/\beta}\|f\|^2_{C_b^\alpha} \bar{\EE}\EE \left[ |X_1-\bar{X}_1|^{2\alpha}   \right].
	\end{align*}
	Since the process $X-\bar{X}$ on $\Omega\times\bar{\Omega}$ has the L\'evy measure
	$2\nu$ and $\beta > 2\alpha$, we get that
	\[
		\int_{\{|x|>1\}} |x|^{2\alpha}2\nu(\ud x) = 2\int_{\{|x|>1\}} |x|^{2\alpha-\beta-1} \ud x < \infty,
	\]
	which implies $\bar{\EE}\EE \left[ |X_1-\bar{X}_1|^{2\alpha}   \right]<\infty$ by \cite[Theorem 25.3]{sato}.
	Thus 
	\[
		\left\| \left\| f(X_1) -f(X_t+\bar{X}_{1-t})   \right\|_{L_2(\PP)} \right\|^2_{L_2(\bar{\PP})} \leq C (1-t)^{2\alpha/\beta}
	\]
	for all $t\in(0,1)$ for some $C\in(0,\infty)$ and the claim (a) follows from Lemma \ref{lemma:characterization_for_fractional_smoothness}.\\
	The ''if''-parts of (b) follow from (a) and \eqref{eq:lexicographical_order}.
	By Lemma \ref{lem:ciesielski_function}, the function $g^{\alpha,\ell}$ satisfies \eqref{eq:f_assumption} with $[a,b] = [k2^{-\ell},(k+1)2^{-\ell}]$, 
	$r=2^{-\ell-3}$, $c_2=2^{-\ell}2^{8\alpha-10}$ and $\eta = 2\alpha$. Thus, Lemma \ref{lem:stable_lowerBound} implies that
	\[
		 \left\| \left\| f(X_1) -f(X_t+\bar{X}_{1-t})   \right\|_{L_2(\PP)} \right\|^2_{L_2(\bar{\PP})}
		 \geq c(1-t)^{2\alpha/\beta}
	\]
        for some $c>0$, and with the use of Lemma \ref{lemma:characterization_for_fractional_smoothness} this proves the ''only if''-parts of (b).
\end{proof}

	\begin{thm}\label{th:stable_f} Let $f\in NBV$.
	\begin{enumerate}
		\item[(a)]  If $\beta < 1$, then $f(X_1)\in \DD$.
		\item[(b)]  If $\beta = 1$, then $f(X_1) \in (L_2(\PP),\DD)_{\theta,q}$ for all $\theta\in(0,1)$ and $q\in[1,\infty]$. 
		\item[(c)]  Let $\theta\in(0,1)$.
		 If $\beta \leq 1/\theta$, then $f(X_1) \in (L_2(\PP),\DD)_{\theta,\infty}$.
	\end{enumerate}
\end{thm}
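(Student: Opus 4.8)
The plan is to deduce (a) directly from Theorem~\ref{th:BV_gives_DD}(a), and to deduce (b) and (c) from Lemma~\ref{lemma:characterization_for_fractional_smoothness} together with a single estimate on the increments $\bigl\|\,\|f(X_1)-f(X_t+\bar{X}_{1-t})\|_{L_2(\PP)}\bigr\|_{L_2(\bar{\PP})}$ that uses the $NBV$ structure of $f$ and the scaling $X_s\stackrel{d}{=}s^{1/\beta}X_1$. For~(a), since $\nu(\ud x)=b|x|^{-\beta-1}\ud x$ one has $m_1=b\bigl(2\int_0^1 x^{-\beta}\,\ud x+2\int_1^\infty x^{-\beta-1}\,\ud x\bigr)$, which is finite exactly when $\beta<1$; as the symmetric strictly stable process satisfies \ref{A.bdd} and $f\in NBV$, Theorem~\ref{th:BV_gives_DD}(a) gives $f(X_1)\in\DD$.

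For the key estimate, fix $t_0\in(0,1)$. Writing $f(x)=\int_\RR\one_{[0,\infty)}(x-u)\,\mu_f(\ud u)$ and applying the Cauchy--Schwarz inequality with respect to the finite measure $|\mu_f|$ together with $\int_\RR\bigl(\one_{[0,\infty)}(y+h-u)-\one_{[0,\infty)}(y-u)\bigr)^2\,\ud y=|h|$, one obtains $\int_\RR(f(y+h)-f(y))^2\,\ud y\le\|f\|_{BV}^2\,|h|$ (as in the proof of Theorem~\ref{th:BV_gives_DD}(a)). Combining this with $|f(y+h)-f(y)|^2\le 4\|f\|_\infty^2$ and with $\|p_t\|_\infty=t^{-1/\beta}\|p_1\|_\infty\le t_0^{-1/\beta}\|p_1\|_\infty$ for $t\ge t_0$, there is $C_{t_0}<\infty$ such that
\[
	\int_\RR\bigl(f(y+h)-f(y)\bigr)^2 p_t(y-z)\,\ud y\le C_{t_0}\,\|f\|_{BV}^2\,\bigl(1\wedge|h|\bigr)\qquad\text{for all }h,z\in\RR,\ t\in[t_0,1].
\]
Conditioning on $\ftn_t$ and substituting $y\mapsto y-\bar{X}_{1-t}$ (cf.\ \eqref{align:lower_bound}), and then using $X_{1-t}-\bar{X}_{1-t}\stackrel{d}{=}(1-t)^{1/\beta}(X_1-\bar{X}_1)$ with $X_1-\bar{X}_1$ symmetric strictly stable of index $\beta$ (its L\'evy measure is $2\nu$), this yields for $t\in[t_0,1)$
\[
	\bigl\|\,\|f(X_1)-f(X_t+\bar{X}_{1-t})\|_{L_2(\PP)}\bigr\|_{L_2(\bar{\PP})}^2\le C_{t_0}\,\|f\|_{BV}^2\,\EE\bigl[1\wedge(1-t)^{1/\beta}|X_1-\bar{X}_1|\bigr],
\]
while on $(0,t_0)$ the left-hand side is at most $4\|f\|_\infty^2$.

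For~(c): if $\beta<1$ the claim follows from~(a) and the embedding $\DD\subseteq(L_2(\PP),\DD)_{\theta,\infty}$ in \eqref{eq:lexicographical_order}, and if $\beta=1$ it follows from~(b). If $1<\beta\le 1/\theta$, then $\EE[|X_1-\bar{X}_1|]<\infty$ by \cite[Theorem~25.3]{sato}, so the estimate above is at most $C(1-t)^{1/\beta}$ on $[t_0,1)$; since $1/\beta-\theta\ge 0$ and the part on $(0,t_0)$ is bounded, we get $\sup_{t\in(0,1)}(1-t)^{-\theta/2}\bigl\|\,\|f(X_1)-f(X_t+\bar{X}_{1-t})\|_{L_2(\PP)}\bigr\|_{L_2(\bar{\PP})}<\infty$, and Lemma~\ref{lemma:characterization_for_fractional_smoothness} with $q=\infty$ gives $f(X_1)\in(L_2(\PP),\DD)_{\theta,\infty}$.

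For~(b): by the embeddings \eqref{eq:lexicographical_order} it suffices to prove $f(X_1)\in(L_2(\PP),\DD)_{\theta,1}$ for every $\theta\in(0,1)$. Fix such $\theta$, choose $\gamma\in(\theta,1)$, and use $1\wedge x\le x^\gamma$; since $\gamma<1=\beta$, $\EE[|X_1-\bar{X}_1|^\gamma]<\infty$ by \cite[Theorem~25.3]{sato}, so the key estimate gives $\bigl\|\,\|f(X_1)-f(X_t+\bar{X}_{1-t})\|_{L_2(\PP)}\bigr\|_{L_2(\bar{\PP})}^2\le C(1-t)^\gamma$ on $[t_0,1)$. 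Hence the integrand of $\int_0^1(1-t)^{-\theta/2}\bigl\|\,\|f(X_1)-f(X_t+\bar{X}_{1-t})\|_{L_2(\PP)}\bigr\|_{L_2(\bar{\PP})}\,\tfrac{\ud t}{1-t}$ is bounded near $t=1$ by a constant times $(1-t)^{(\gamma-\theta)/2-1}$ with $(\gamma-\theta)/2>0$, and the contribution of $(0,t_0)$ is plainly finite, so this integral is finite and Lemma~\ref{lemma:characterization_for_fractional_smoothness} with $q=1$ finishes~(b). The main obstacle is precisely the borderline index $\beta=1$: there $X_{1-t}-\bar{X}_{1-t}$ has no finite first moment, so the linear-in-$|h|$ bound $\int_\RR(f(y+h)-f(y))^2\,\ud y\le\|f\|_{BV}^2|h|$ alone only yields a vacuous estimate; one must keep the truncation $1\wedge|h|$ and spend a fractional moment of order $\gamma\in(\theta,1)$ of the strictly $1$-stable variable $X_1-\bar{X}_1$.
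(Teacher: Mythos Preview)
Your argument is correct. Parts (a) and (c) coincide with the paper's proof: (a) is just Theorem~\ref{th:BV_gives_DD}(a), and for (c) with $\beta>1$ the paper carries out exactly your computation (density bound $\|p_t\|_\infty=t^{-1/\beta}\|p_1\|_\infty$, scaling $X_{1-t}-\bar X_{1-t}\stackrel{d}{=}(1-t)^{1/\beta}(X_1-\bar X_1)$, finiteness of the first moment via \cite[Theorem~25.3]{sato}, and then Lemma~\ref{lemma:characterization_for_fractional_smoothness}). The only difference is that the paper fixes $t_0=1/2$ while you keep a generic $t_0$.

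The genuinely different step is (b). The paper does not go through Lemma~\ref{lemma:characterization_for_fractional_smoothness} at all for $\beta=1$: since $m_{1/\theta}<\infty$ for every $\theta\in[1/2,1)$ when $\nu(\ud x)=b|x|^{-2}\ud x$, Theorem~\ref{th:BV_fractional}(a) gives $f(X_1)\in(L_2(\PP),\DD)_{\theta,\infty}$ for all $\theta\in[1/2,1)$, and the embeddings \eqref{eq:lexicographical_order} then cover all $(\theta,q)$. Your route keeps the truncation $1\wedge|h|$, spends the inequality $1\wedge x\le x^\gamma$, and uses the $\gamma$-moment of the symmetric $1$-stable variable $X_1-\bar X_1$ (finite for $\gamma<1$) to obtain integrability in Lemma~\ref{lemma:characterization_for_fractional_smoothness} with $q=1$. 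This has the advantage of unifying (b) and (c) under a single estimate and avoiding the separate approximation construction behind Theorem~\ref{th:BV_fractional}(a); the paper's route has the advantage of not reopening the density/scaling computation and simply citing an already-proved result.
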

\begin{proof}
	(a) The claim follows from Theorem \ref{th:BV_gives_DD}(a). \\
	(b) The claim follows from Theorem \ref{th:BV_fractional} and \eqref{eq:lexicographical_order}.\\
	(c) If $\beta\leq 1$, then the claim follows from (b). Assume that $\beta>1$. We have
	\begin{align*}
		&  \left\| \left\| f(X_1) -f(X_t+\bar{X}_{1-t})   \right\|_{L_2(\PP)} \right\|^2_{L_2(\bar{\PP})}\\
		& = \bar{\EE}\EE \left[ \left( \int_\RR \one_{[u,\infty)}(X_1) - \one_{[u,\infty)}(X_t+\bar{X}_{1-t}) \mu_f(\ud u) \right)^2\right]\\
		& \leq |\mu_f|(\RR) \int_\RR \bar{\EE}\EE \left[ \one_{[u-X_{1-t},u-\bar{X}_{t-1})\cup[u-\bar{X}_{1-t},u-X_{t-1})}(X_t)\right]\mu_f(\ud u)\\
		& \leq |\mu_f|^2(\RR)\|p_t\|_\infty \bar{\EE}\EE \left[ |\bar{X}_{1-t} - X_{1-t}|  \right]\\
		& = |\mu_f|^2(\RR)t^{-1/\beta}\|p_1\|_\infty \bar{\EE}\EE \left[ (1-t)^{1/\beta}|\bar{X}_1 - X_1|  \right]\\
		& \leq (1-t)^{1/\beta}|\mu_f|^2(\RR)t^{-1/\beta}\|p_1\|_\infty \bar{\EE}\EE \left[ |\bar{X}_1 - X_1|  \right].
	\end{align*}
	Since the process $X-\bar{X}$ has the L\'evy measure
	$2\nu$ and
	\[
		\int_{\{|x|>1\}} |x|2\nu(\ud x) = 2\int_{\{|x|>1\}} |x|^{-\beta} \ud x < \infty,
	\]
	for $\beta>1$, we get $\bar{\EE}\EE\left[ |\bar{X}_1-\hat{X}_1| \right]<\infty$ from \cite[Theorem 25.3]{sato}.
	Thus 
	\[
		\left\| \left\| f(X_1) -f(X_t+\bar{X}_{1-t})   \right\|_{L_2(\PP)} \right\|^2_{L_2(\bar{\PP})} \leq C (1-t)^{1/\beta}
	\]
	for all $t\in(1/2,1)$ for some $C\in(0,\infty)$. {\pink When $t\in(0,1/2]$, then
	\[
	        \left\| \left\| f(X_1) -f(X_t+\bar{X}_{1-t})   \right\|_{L_2(\PP)} \right\|^2_{L_2(\bar{\PP})} \leq \|f\|_{BV}^2 \leq \|f\|_{BV}^2 2^{1/\beta}(1-t)^{1/\beta}
	\]} and the claim follows from Lemma \ref{lemma:characterization_for_fractional_smoothness}.
\end{proof}

\begin{thm}\label{th:stable_K} Let $K\in\RR$. 
	\begin{enumerate}
		\item[(a)]  It holds that $\one_{[K,\infty)}(X_1) \in \DD$ if and only if $\beta < 1$.
		\item[(b)]  It holds that $\one_{[K,\infty)}(X_1) \in (L_2(\PP),\DD)_{\theta,q}$ for all $\theta\in(0,1)$ and $q\in[1,\infty]$ if and only if $\beta\leq 1$. 
		\item[(c)]  Let $\theta\in(0,1)$ and $q\in[1,\infty)$. Then
		\begin{enumerate}
			\item[(i)] $\one_{[K,\infty)}(X_1) \in (L_2(\PP),\DD)_{\theta,q}$ if and only if $\beta < 1/\theta$  and 
			\item[(ii)] $\one_{[K,\infty)}(X_1) \in (L_2(\PP),\DD)_{\theta,\infty}$ if and only if $\beta \leq 1/\theta$.
		\end{enumerate}
		\item[(d)]  Let $\theta\in(0,1)$ and $q\in[1,\infty)$. For the Brownian motion $B$ we have that
		\begin{enumerate}
			\item[(i)] $\one_{[K,\infty)}(B_1) \in (L_2(\PP),\DD)_{\theta,q}$ if and only if $2 < 1/\theta$ and
			\item[(ii)] $\one_{[K,\infty)}(B_1) \in (L_2(\PP),\DD)_{\theta,\infty}$ if and only if $2 \leq 1/\theta$.
		\end{enumerate}
	\end{enumerate}
\end{thm}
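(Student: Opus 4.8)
The plan is to prove the parts in the logical order (a), (c), (b), (d), deducing (b) from (c). Throughout, \ref{A.bdd}, \ref{A.pos} and \ref{A.pos2} hold by Lemma \ref{lem:assumptions}, and $\one_{[K,\infty)}\in NBV$. Part (a) is immediate from Theorem \ref{th:BV_gives_DD}: for the stable L\'evy measure $\nu(\ud x)=b|x|^{-\beta-1}\ud x$ one has $m_1=\int_\RR(|x|\wedge1)\nu(\ud x)<\infty$ precisely when $\beta<1$ (near $0$ the integrand behaves like $|x|^{-\beta}$, and it is integrable at infinity for every $\beta>0$), and Theorem \ref{th:BV_gives_DD}(a),(b) state that $\one_{[K,\infty)}(X_1)\in\DD$ if and only if $m_1<\infty$. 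For (b): the ``if'' direction follows from (a) together with $\DD\subseteq(L_2(\PP),\DD)_{\theta,q}$ (see \eqref{eq:lexicographical_order}) when $\beta<1$, and from Theorem \ref{th:stable_f}(b) when $\beta=1$; the ``only if'' direction is by contraposition. If $\beta>1$ then the interval $(1/\beta,1)$ is nonempty, and for $\theta$ in it one has $\beta>1/\theta$, so part (c)(ii), proved below, gives $\one_{[K,\infty)}(X_1)\notin(L_2(\PP),\DD)_{\theta,\infty}$, contradicting membership for all $\theta$; hence $\beta\le1$.

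The core is part (c). When $\beta\le1$ the conditions $\beta<1/\theta$ and $\beta\le1/\theta$ hold automatically (since $1/\theta>1$), and membership in every $(L_2(\PP),\DD)_{\theta,q}$ follows from (a) with \eqref{eq:lexicographical_order} if $\beta<1$, or from Theorem \ref{th:stable_f}(b) if $\beta=1$; so it remains to treat $\beta\in(1,2)$. There I would establish the two-sided estimate
\[
	c\,(1-t)^{1/\beta}\ \le\ \Phi(t):=\Big\|\,\big\|\one_{[K,\infty)}(X_1)-\one_{[K,\infty)}(X_t+\bar{X}_{1-t})\big\|_{L_2(\PP)}\Big\|_{L_2(\bar{\PP})}^{2}\ \le\ C\,(1-t)^{1/\beta},
\]
with the upper bound valid for all $t\in(0,1)$ and the lower bound for $t\in[t_0,1)$. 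The upper bound is exactly what the proof of Theorem \ref{th:stable_f}(c) establishes in the case $\beta>1$ (taking $f=\one_{[K,\infty)}$, so $|\mu_f|(\RR)=\|f\|_{BV}=1$): $\Phi(t)\le\|p_t\|_\infty\,\EE[|X_{1-t}-\bar{X}_{1-t}|]=t^{-1/\beta}\|p_1\|_\infty(1-t)^{1/\beta}\,\EE[|X_1-\bar{X}_1|]$ with $\EE[|X_1-\bar{X}_1|]<\infty$ because $\beta>1$ (\cite[Theorem 25.3]{sato}), together with $\Phi(t)\le1$ for $t\le1/2$. The lower bound is Lemma \ref{lem:stable_lowerBound} applied to $\one_{[K,\infty)}$, which satisfies \eqref{eq:f_assumption} with $\eta=1$ on $[K-r,K+r]$ by the identity \eqref{align:K_lower}. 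Feeding $\Phi(t)\asymp(1-t)^{1/\beta}$ into Lemma \ref{lemma:characterization_for_fractional_smoothness} reduces (c) to the finiteness of $\big\|(1-t)^{1/(2\beta)-\theta/2}\big\|_{L_q((0,1),\ud t/(1-t))}$: for $q<\infty$ this is $\big(\int_0^1(1-t)^{q(1/(2\beta)-\theta/2)-1}\ud t\big)^{1/q}$, finite iff $1/(2\beta)-\theta/2>0$, i.e.\ $\beta<1/\theta$, which gives (c)(i); for $q=\infty$ it is $\sup_t(1-t)^{1/(2\beta)-\theta/2}$, finite iff $\beta\le1/\theta$, which gives (c)(ii). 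The lower bound on $[t_0,1)$ is what makes the ``only if'' directions work and the upper bound the ``if'' directions.

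Part (d) follows the same scheme with $B_1\sim N(0,1)$. The characterization of Lemma \ref{lemma:characterization_for_fractional_smoothness} holds verbatim with $X$ replaced by $B$ --- this is the original Brownian statement \cite[Corollary 2.3]{geiss-hujo}, and its proof only uses the Wiener chaos decomposition, Lemma \ref{lemma:thm_geisshujo}, and $\EE[f(B_1)\mid\ftn_t]=\bar{\EE}[f(B_t+\bar{B}_{1-t})]$. Conditioning on $B_t\sim N(0,t)$ gives $\Phi_B(t)=\EE\big[\int_{K-U\vee V}^{K-U\wedge V}\phi_t(y)\,\ud y\big]$, where $U,V$ are i.i.d.\ $N(0,1-t)$ and $\phi_t$ is the $N(0,t)$ density; bounding $\phi_t$ above by $(2\pi t)^{-1/2}$, bounding it below by a positive constant on $[K-1,K+1]$ for $t\ge t_0$, and using $\EE[|U-V|]\asymp(1-t)^{1/2}$, yields $\Phi_B(t)\asymp(1-t)^{1/2}$. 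Then Lemma \ref{lemma:characterization_for_fractional_smoothness} reduces (d) to the finiteness of $\big\|(1-t)^{1/4-\theta/2}\big\|_{L_q((0,1),\ud t/(1-t))}$, which holds for $q<\infty$ iff $\theta<1/2$, i.e.\ $2<1/\theta$, and for $q=\infty$ iff $\theta\le1/2$, i.e.\ $2\le1/\theta$.

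The main obstacle is the two-sided estimate $\Phi(t)\asymp(1-t)^{1/\beta}$ and its Brownian analogue $\Phi_B(t)\asymp(1-t)^{1/2}$. The upper bound requires $\EE[|X_1-\bar{X}_1|]<\infty$, hence $\beta>1$, which is exactly why the range $\beta\le1$ must be routed through parts (a) and (b) rather than through the explicit estimate. The lower bound rests on the positivity input of assumption \ref{A.pos2}, packaged in Lemma \ref{lem:stable_lowerBound}; obtaining it uniformly over $t\in[t_0,1)$ --- not merely at a single value of $t$ --- is what forces the relevant integral to diverge and so produces the sharp necessary conditions.
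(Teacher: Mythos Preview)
Your proposal is correct and follows essentially the same route as the paper for (a)--(c): the same appeals to Theorems \ref{th:BV_gives_DD} and \ref{th:stable_f}, the same lower bound via Lemma \ref{lem:stable_lowerBound} together with \eqref{align:K_lower}, and the same use of Lemma \ref{lemma:characterization_for_fractional_smoothness} to convert the two-sided estimate $\Phi(t)\asymp(1-t)^{1/\beta}$ into the stated membership criteria. The only noticeable difference is in (d): the paper invokes \cite[Example 4.7]{geiss-toivola1} for the asymptotics $(Ta)'(t)\sim(1-t)^{-1/2}$ and plugs this into Lemma \ref{lemma:thm_geisshujo}, whereas you give a self-contained computation of the equivalent estimate $\Phi_B(t)\asymp(1-t)^{1/2}$ by conditioning on $B_t$ --- your argument is correct (just make explicit that the lower bound on $\phi_t$ is applied on the event $\{|U|,|V|\le 1\}$) and has the advantage of not relying on an external reference.
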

\begin{proof}
	(a) The claim follows from Theorem \ref{th:BV_gives_DD}(a) and the proof of Theorem \ref{th:BV_gives_DD}(b), since by \cite[Theorem 24.10(ii)]{sato} 
	    the continuous density of $X_1$ is strictly positive on the whole real line. \\
	(b) The ''if'' follows from Theorem \ref{th:stable_f} and the ''only if'' follows from (c), since $\beta <1/\theta$ for all $\theta\in(0,1)$.\\
	(c) The ''if''-parts of (i) and (ii) follow from Theorem \ref{th:stable_f}(c) and \eqref{eq:lexicographical_order}.\\
	Fix $r>0$ and $t_0\in(0,1)$. By \eqref{align:K_lower}, the function 
	$\one_{[K,\infty)}$ satisties \eqref{eq:f_assumption} with $[a,b] = [K-r,K+r]$, 
	$c_2=1$ and $\eta = 1$. Thus, Lemma \ref{lem:stable_lowerBound} implies that
	\[
		\left\| \left\| f(X_1) - f(X_t+\bar{X}_{1-t}) \right\|_{L_2(\PP)} \right\|_{L_2(\bar{\PP})}^2 \geq c(1-t)^{1/\beta}
	\]
	for some $c>0$.
	The ''only if''-parts of (c) follow now from Lemma \ref{lemma:characterization_for_fractional_smoothness}.\\
	(d) We choose $E$ and $a$ like in the proof of
  	Lemma \ref{lemma:characterization_for_fractional_smoothness} on the corresponding Wiener chaos. The claim follows from Lemma \ref{lemma:thm_geisshujo} 
  	and the proof in \cite[Example 4.7]{geiss-toivola1}, where it is shown that $(Ta)'(t) \sim (1-t)^{-1/2}$.
\end{proof}


%
%

%
%
%


\appendix
\section{Appendix}


The reiteration theorem states that  $(A_0,A_1)_{\eta\theta,q} = (A_0,(A_0,A_1)_{\eta,\infty})_{\theta,q}$ for all $\eta,\theta\in(0,1)$ and $q\in[1,\infty]$
with equivalent norms. In the following lemma we compute explicit constants for the equivalence of the norms for $q=\infty$.

\begin{lem} \label{lemma:constantA}
	Let $(A_0,A_1)$ be a compatible couple and $\eta,\theta\in(0,1)$. Then
	\[ 
	      \|f\|_{(A_0,A_1)_{\eta\theta,\infty}} \leq \|f\|_{(A_0,(A_0,A_1)_{\eta,\infty})_{\theta,\infty}} \leq 3\|f\|_{(A_0,A_1)_{\eta\theta,\infty}}
	\]
	for all $f\in (A_0,A_1)_{\eta\theta,\infty} = \left(A_0,(A_0,A_1)_{\eta,\infty} \right)_{\theta,\infty}$.
\end{lem}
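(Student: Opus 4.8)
The plan is to establish the two inequalities separately, tracking constants carefully. Throughout, write $A_\eta := (A_0,A_1)_{\eta,\infty}$, abbreviate $\Phi(s) := K(f,s;A_0,A_1)$ for $s>0$, and set $M := \|f\|_{(A_0,A_1)_{\eta\theta,\infty}} = \sup_{s>0}s^{-\eta\theta}\Phi(s)$, so that $\Phi(s)\le Ms^{\eta\theta}$ for all $s>0$. Note $A_\eta\subseteq A_0+A_1$, so all the spaces involved are subspaces of $A_0+A_1$.

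For the lower bound $\|f\|_{(A_0,A_1)_{\eta\theta,\infty}}\le\|f\|_{(A_0,A_\eta)_{\theta,\infty}}$, I would start from an arbitrary decomposition $f=a_0+a_1$ with $a_0\in A_0$ and $a_1\in A_\eta$. Subadditivity of the $K$-functional, $K(a_0,s;A_0,A_1)\le\|a_0\|_{A_0}$, and the bound $K(a_1,s;A_0,A_1)\le s^\eta\sup_{u>0}u^{-\eta}K(a_1,u;A_0,A_1)=s^\eta\|a_1\|_{A_\eta}$ give $\Phi(s)\le\|a_0\|_{A_0}+s^\eta\|a_1\|_{A_\eta}$. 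Taking the infimum over such decompositions yields $\Phi(s)\le K(f,s^\eta;A_0,A_\eta)$, hence $s^{-\eta\theta}\Phi(s)=(s^\eta)^{-\theta}\Phi(s)\le (s^\eta)^{-\theta}K(f,s^\eta;A_0,A_\eta)\le\|f\|_{(A_0,A_\eta)_{\theta,\infty}}$, and the supremum over $s>0$ finishes this direction.

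For the upper bound I may assume $M<\infty$. Fix $t>0$ and $\varepsilon>0$ and set $s_0:=t^{1/\eta}$. I would pick $g_0\in A_0$, $g_1\in A_1$ with $f=g_0+g_1$ and $\|g_0\|_{A_0}+s_0\|g_1\|_{A_1}\le\Phi(s_0)+\varepsilon$, so in particular $\|g_0\|_{A_0}\le\Phi(s_0)+\varepsilon$ and $\|g_1\|_{A_1}\le s_0^{-1}(\Phi(s_0)+\varepsilon)$. The crux is to bound $\|g_1\|_{A_\eta}=\sup_{s>0}s^{-\eta}K(g_1,s;A_0,A_1)$ by splitting at $s_0$. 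For $0<s\le s_0$, the bound $K(g_1,s;A_0,A_1)\le s\|g_1\|_{A_1}$ and $1-\eta>0$ give $s^{-\eta}K(g_1,s;A_0,A_1)\le s^{1-\eta}\|g_1\|_{A_1}\le s_0^{-\eta}(\Phi(s_0)+\varepsilon)$. For $s>s_0$, subadditivity of $K$ gives $K(g_1,s;A_0,A_1)\le\Phi(s)+\|g_0\|_{A_0}\le\Phi(s)+\Phi(s_0)+\varepsilon$, and then $\Phi(s)\le Ms^{\eta\theta}$ together with $-\eta(1-\theta)<0$ and $s>s_0$ gives $s^{-\eta}K(g_1,s;A_0,A_1)\le Ms_0^{-\eta(1-\theta)}+s_0^{-\eta}(\Phi(s_0)+\varepsilon)$. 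Using $\Phi(s_0)\le Ms_0^{\eta\theta}$ in both estimates, I obtain $\|g_1\|_{A_\eta}\le 2Ms_0^{-\eta(1-\theta)}+\varepsilon s_0^{-\eta}$; in particular $g_1\in A_\eta$. Hence
\[
	K(f,t;A_0,A_\eta)\le\|g_0\|_{A_0}+t\|g_1\|_{A_\eta}\le\Phi(s_0)+\varepsilon+t\bigl(2Ms_0^{-\eta(1-\theta)}+\varepsilon s_0^{-\eta}\bigr)\le 3Mt^\theta+2\varepsilon,
\]
where the last step inserts $s_0=t^{1/\eta}$ (so that $\Phi(s_0)\le Mt^\theta$, $t\,s_0^{-\eta(1-\theta)}=t^\theta$ and $t\,s_0^{-\eta}=1$). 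Letting $\varepsilon\downarrow0$ and taking the supremum of $t^{-\theta}K(f,t;A_0,A_\eta)$ over $t>0$ gives $\|f\|_{(A_0,A_\eta)_{\theta,\infty}}\le 3M$.

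The one genuinely nontrivial point — and the only place where the hypothesis on $\Phi$ is used — is the large-$s$ part of the estimate for $\|g_1\|_{A_\eta}$: an element of $A_1$ need not lie in $(A_0,A_1)_{\eta,\infty}$ (indeed $s^{-\eta}K(g_1,s;A_0,A_1)$ could a priori grow like $s^{1-\eta}$), so one cannot bound $\|g_1\|_{A_\eta}$ by $\|g_1\|_{A_1}$; it is precisely the growth bound $\Phi(s)\le Ms^{\eta\theta}$, inherited by $g_1$ through subadditivity of $K$ and the control on $\|g_0\|_{A_0}$, that makes $g_1\in A_\eta$ and supplies the decisive $Ms_0^{-\eta(1-\theta)}$ term. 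Everything else is exponent bookkeeping, with the balance $s_0=t^{1/\eta}$ chosen so that the two contributions $\Phi(s_0)$ and $t\,s_0^{-\eta(1-\theta)}$ are both of order $t^\theta$, producing the constant $1+2=3$.
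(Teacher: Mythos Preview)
Your proof is correct and follows essentially the same approach as the paper's: for the lower bound you use subadditivity of $K$ together with $K(a_1,s;A_0,A_1)\le s^\eta\|a_1\|_{A_\eta}$, and for the upper bound you split the supremum defining $\|g_1\|_{A_\eta}$ at the scale $s_0$ coming from the $K$-functional decomposition of $f$. The only difference is a change of variable: the paper parametrizes by the $(A_0,A_1)$-scale (its $t$ is your $s_0$) and bounds $K(f,t^\eta;A_0,A_\eta)$, keeping the terms $K(f,t;A_0,A_1)$ until the very end, whereas you parametrize directly by the $(A_0,A_\eta)$-scale $t=s_0^\eta$ and insert $\Phi(s_0)\le M s_0^{\eta\theta}$ earlier; both routes produce the same three contributions and the constant $3$.
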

\begin{proof}
	First inequality: Let $t>0$ and $\varepsilon>0$. There exist $f_0, g_0 \in A_0$, $g\in (A_0,A_1)_{\eta,\infty}$ and $g_1 \in A_1$ such that
	$f= f_0 + g = f_0 + g_0 + g_1$ and
	\begin{align*}
		K(f,t^\eta; A_0,(A_0,A_1)_{\eta,\infty})
		  & \geq \|f_0\|_{A_0} + t^\eta \|g\|_{(A_0,A_1)_{\eta,\infty}} - \frac{\varepsilon}{2}                                       \\
		  & \geq \|f_0\|_{A_0} + t^\eta t^{-\eta} \left( \|g_0\|_{A_0} + t\|g_1\|_{A_1} -\frac{\varepsilon}{2} \right) - \frac{\varepsilon}{2} \\
		  & \geq \|f_0 + g_0\|_{A_0}  + t\|g_1\|_{A_1} -\varepsilon                                                                   \\
		  & \geq K(f,t; A_0,A_1) - \varepsilon.                                                                                       
	\end{align*}
	Thus
	\begin{align*}
		\|f\|_{(A_0,(A_0,A_1)_{\eta,\infty})_{\theta,\infty}}
		  & = \sup_{t>0} (t^{\eta})^{-\theta}K(f,t^\eta; A_0,(A_0,A_1)_{\eta,\infty}) \\
		  & \geq \sup_{t>0} t^{-\eta\theta} K(f,t; A_0,A_1)                           \\
		  & = \|f\|_{(A_0,A_1)_{\eta\theta,\infty}}.                                  
	\end{align*}
	Second inequality: Let $f\in (A_0,A_1)_{\eta\theta,\infty}$ and $\varepsilon>0$. For all $t>0$ we find $g_t\in A_0$ and $h_t\in A_1$ such that
	$f=g_t+h_t$ and 
	\[
		\|g_t\|_{A_0} + t\|h_t\|_{A_1} \leq K(f,t;A_0,A_1) + \frac{\varepsilon}{2} t^{\eta\theta}.
	\] 
	Then
	\begin{align*} \label{align:estimates}
		  & K(g_t,s;A_0,A_1) \leq \|g_t\|_{A_0} \leq K(f,t;A_0,A_1)+ \frac{\varepsilon}{2} t^{\eta\theta} \text{ and }                   \\
		  & K(h_t,s; A_0, A_1) \leq s\|h_t\|_{A_1} \leq \frac{s}{t} \left[ K(f,t;A_0,A_1)+ \frac{\varepsilon}{2} t^{\eta\theta} \right]
	\end{align*}
	for all $s\in(0,\infty)$. These inequalities
	give, keeping in mind that $h_t=f-g_t$, that
	\begin{align*}
		t^\eta\|h_t\|_{(A_0,A_1)_{\eta,\infty}}
		  & = t^\eta \sup_{s>0} s^{-\eta} K(h_t,s;A_0,A_1)                                                                                                     \\
		  & \leq \left( \sup_{0<s\leq t} \left( \frac{s}{t}\right)^{-\eta} \frac{s}{t} \left[ K(f,t;A_0,A_1) + \frac{\varepsilon}{2}t^{\eta\theta} \right] \right)      
		\vee \\
		  & \quad \left( \sup_{s\geq t} \left(\frac{s}{t}\right)^{-\eta} \left[ K(f,s;A_0,A_1) + K(g_t,s;A_0,A_1)  \right]  \right)                            \\
		  & \leq \left( K(f,t;A_0,A_1) + \frac{\varepsilon}{2}t^{\eta\theta}  \right) \vee                                                                              \\
		  & \quad \left( \sup_{s\geq t} \left(\frac{s}{t}\right)^{-\eta} \left[ K(f,s;A_0,A_1) + K(f,t;A_0,A_1) + \frac{\varepsilon}{2}t^{\eta\theta}  \right]  \right) \\
		  & \leq K(f,t;A_0,A_1) + \frac{\varepsilon}{2}t^{\eta\theta} + \sup_{s\geq t} \left(\frac{s}{t}\right)^{-\eta\theta} K(f,s;A_0,A_1).                           
	\end{align*}
	We obtain
	\begin{align*}
		  & \|f\|_{(A_0,(A_0,A_1)_{\eta,\infty})_{\theta,\infty}}                                                                                                     \\*
		  & = \sup_{t>0} \left(t^\eta\right)^{-\theta} K\left(f,t^\eta; A_0,(A_0,A_1)_{\eta,\infty}\right)                                                            \\
		  & \leq \sup_{t>0} t^{-\eta\theta} \left( \|g_t\|_{A_0} + t^\eta\|h_t\|_{(A_0,A_1)_{\eta,\infty}}  \right)                                                   \\
		  & \leq \sup_{t>0}  t^{-\eta\theta} \left( 2 K(f,t;A_0,A_1)+ \varepsilon t^{\eta\theta} + \sup_{s\geq t} \left(\frac{s}{t}\right)^{-\eta\theta} K(f,s;A_0,A_1)\right) \\
		  & \leq 3 \sup_{s>0} s^{-\eta\theta}K(f,s;A_0,A_1) + \varepsilon                                                                                                      \\*
		  & = 3\|f\|_{(A_0,A_1)_{\eta\theta,\infty}} + \varepsilon.                                                                                                            
	\end{align*}
\end{proof}



\begin{lem}\label{lemma:holder_interpolationA}
	Let $\alpha\in(0,1)$. Then $C^\alpha_b = (B(\RR), Lip)_{\alpha,\infty}$ with 
	\[ 
		\|\cdot\|_{C^\alpha_b} \leq 3 \|\cdot\|_{(B(\RR), Lip)_{\alpha,\infty}}
					\leq   6 \|\cdot\|_{C^\alpha_b}.
	\] 
\end{lem}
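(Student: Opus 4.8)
The plan is to prove the two inclusions $C^\alpha_b \subseteq (B(\RR),Lip)_{\alpha,\infty}$ and $(B(\RR),Lip)_{\alpha,\infty} \subseteq C^\alpha_b$ separately, keeping track of the constants; since all the spaces here consist of honest Borel functions rather than equivalence classes, every estimate below is pointwise, so no measure-theoretic subtlety intervenes. Throughout I abbreviate $K(\cdot,\cdot):=K(\cdot,\cdot;B(\RR),Lip)$ and recall $\|g\|_{Lip}=\|g\|_\infty+\|g\|_{C^1}$.

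First I would treat the easy direction, $\|f\|_{C^\alpha_b}\le 3\|f\|_{(B(\RR),Lip)_{\alpha,\infty}}$, which simultaneously yields $(B(\RR),Lip)_{\alpha,\infty}\subseteq C^\alpha_b$. Set $M:=\|f\|_{(B(\RR),Lip)_{\alpha,\infty}}=\sup_{t>0}t^{-\alpha}K(f,t)$ and assume $M<\infty$. For any admissible splitting $f=f_0+f_1$ one has $\|f\|_\infty\le\|f_0\|_\infty+\|f_1\|_\infty\le\|f_0\|_\infty+\|f_1\|_{Lip}$; taking the infimum at $t=1$ gives $\|f\|_\infty\le K(f,1)\le M$. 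For the seminorm, fix $x\neq y$, write $h=|x-y|$, and for an admissible splitting estimate $|f(x)-f(y)|\le|f_0(x)-f_0(y)|+|f_1(x)-f_1(y)|\le 2\|f_0\|_\infty+h\|f_1\|_{C^1}\le 2\big(\|f_0\|_\infty+h\|f_1\|_{Lip}\big)$, so that $|f(x)-f(y)|\le 2K(f,h)\le 2h^\alpha M$, i.e.\ $\|f\|_{C^\alpha}\le 2M$. Adding the two estimates gives $\|f\|_{C^\alpha_b}\le 3M$.

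Next I would prove $\|f\|_{(B(\RR),Lip)_{\alpha,\infty}}\le 2\|f\|_{C^\alpha_b}$ for $f\in C^\alpha_b$ (which after multiplying by $3$ is the stated bound $\le 6\|f\|_{C^\alpha_b}$, and also gives $C^\alpha_b\subseteq(B(\RR),Lip)_{\alpha,\infty}$). For $t\in(0,1]$ I use the one-sided Steklov average $f_1(x):=\tfrac1t\int_0^t f(x+s)\,ds$ and $f_0:=f-f_1$; here $f_1$ is continuous, hence Borel, and bounded, so this is an admissible splitting. Writing $f_0(x)=\tfrac1t\int_0^t\big(f(x)-f(x+s)\big)\,ds$ gives $\|f_0\|_\infty\le\|f\|_{C^\alpha}t^\alpha$; trivially $\|f_1\|_\infty\le\|f\|_\infty$; and for the Lipschitz seminorm I split on the size of the increment: if $|x-y|\le t$ then cancelling the overlap of the two defining integrals gives (for $x<y$) $f_1(y)-f_1(x)=\tfrac1t\int_x^y\big(f(u+t)-f(u)\big)\,du$, whence $|f_1(y)-f_1(x)|\le\|f\|_{C^\alpha}t^{\alpha-1}|x-y|$, while if $|x-y|>t$ the plain Hölder bound $|f_1(x)-f_1(y)|\le\|f\|_{C^\alpha}|x-y|^\alpha$ gives $\tfrac{|f_1(x)-f_1(y)|}{|x-y|}\le\|f\|_{C^\alpha}|x-y|^{\alpha-1}\le\|f\|_{C^\alpha}t^{\alpha-1}$ since $\alpha-1<0$; hence $\|f_1\|_{C^1}\le\|f\|_{C^\alpha}t^{\alpha-1}$. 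Therefore $K(f,t)\le\|f_0\|_\infty+t\|f_1\|_{Lip}\le 2\|f\|_{C^\alpha}t^\alpha+t\|f\|_\infty$, so $t^{-\alpha}K(f,t)\le 2\|f\|_{C^\alpha}+t^{1-\alpha}\|f\|_\infty\le 2\|f\|_{C^\alpha_b}$ for all $t\in(0,1]$. For $t>1$ the trivial splitting $f_0=f$, $f_1=0$ gives $t^{-\alpha}K(f,t)\le t^{-\alpha}\|f\|_\infty\le\|f\|_\infty$. Taking the supremum over $t>0$ finishes this direction.

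The one genuinely delicate point is the estimate $\|f_1\|_{C^1}\le\|f\|_{C^\alpha}t^{\alpha-1}$: the crude Lipschitz bound $\|f_1\|_{C^1}\le 2\|f\|_\infty/t$, valid for all increments, is worthless here, because after multiplication by $t$ it contributes a term $2\|f\|_\infty$ that destroys the required decay of $t^{-\alpha}K(f,t)$ as $t\to 0$; one must exploit the Hölder regularity of $f$ both for small increments (via the overlap cancellation) and for large increments (where, crucially, $\alpha<1$ makes the raw Hölder quotient decreasing in the increment). Everything else is elementary, and the two constant chains $\|f\|_{C^\alpha_b}\le 3\|f\|_{(B(\RR),Lip)_{\alpha,\infty}}$ and $\|f\|_{(B(\RR),Lip)_{\alpha,\infty}}\le 2\|f\|_{C^\alpha_b}\le 3\|f\|_{C^\alpha_b}$ combine to give the claimed $\|\cdot\|_{C^\alpha_b}\le 3\|\cdot\|_{(B(\RR),Lip)_{\alpha,\infty}}\le 6\|\cdot\|_{C^\alpha_b}$.
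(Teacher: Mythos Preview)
Your proof is correct and yields exactly the stated constants. The first inequality is handled identically to the paper (split at $t=|x-y|$ for the seminorm, $t=1$ for the sup-norm). For the second inequality you take a genuinely different but equally natural route: the paper builds $f_t$ as the piecewise linear interpolant through the values $f(kt)$, $k\in\mathbb{Z}$, whereas you use the one-sided Steklov average $f_1(x)=\tfrac1t\int_0^t f(x+s)\,ds$. Both constructions give $\|f-f_t\|_\infty\le t^\alpha\|f\|_{C^\alpha}$ and $\|f_t\|_{Lip}\le t^{\alpha-1}\|f\|_{C^\alpha_b}$, hence the same constant $2$. Your overlap-cancellation / large-increment split for bounding $\|f_1\|_{C^1}$ is the natural analogue of the paper's observation that the piecewise linear function attains its maximal slope on a single mesh interval; neither approach has any real advantage over the other here.
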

\begin{proof} 
	First inequality: Let $f\in (B(\RR), Lip)_{\alpha,\infty}$ and $\varepsilon>0$. For all $t>0$ we find $f_t\in Lip$ such that
	\[
		t^{-\alpha} \left( \|f-f_t\|_\infty + t \|f_t\|_{Lip}  \right) \leq \|f\|_{(B(\RR), Lip)_{\alpha,\infty}} + \varepsilon.
	\]
	Let $x\neq y \in\RR$ and $t=|x-y|>0$. By the triangle inequality we have
	\begin{align*}
		\frac{|f(x)- f(y)|}{|x-y|^\alpha}
		& \leq |x-y|^{-\alpha} \big( |f(x)-f_t(x)| + |f(y)-f_t(y)| + |f_t(x) - f_t(y) |  \big) \\
		& \leq t^{-\alpha} \left( 2\|f-f_t\|_{\infty} + t\|f_t\|_{Lip}  \right)                \\
		& \leq 2 \left( \|f\|_{(B(\RR), Lip)_{\alpha,\infty}} + \varepsilon  \right).                 
	\end{align*}
	It also holds that
	\[ 
		\|f\|_{\infty} \leq \|f-f_1\|_\infty + \|f_1\|_\infty \leq \|f\|_{(B(\RR), Lip)_{\alpha,\infty}} + \varepsilon,
	\]
	so that
	\[
		\|f\|_{C_b^{\alpha}} = \|f\|_\infty + \sup_{x\neq y} \frac{ |f(x) - f(y)| }{ |x-y|^{\alpha}} \leq 3 \|f\|_{(B(\RR), Lip)_{\alpha,\infty}}.
	\]					
	Second inequality: 
	Let $f\in C^\alpha_b$ and $t>0$ and define $f_t$ so that $f_t(kt) = f(kt)$ for $k\in\mathbbm{Z}$ and $f_t$ is linear on each interval $[kt,(k+1)t]$, $k\in\mathbbm{Z}$.
	Then for $x\in[kt,(k+1)t]$ there is $s\in[0,1]$ such that $f_t(x) = sf(kt) + (1-s)f((k+1)t)$ and we get that
	\begin{align*}
		    \|f-f_t\|_\infty
		& = \sup_{k\in\mathbbm{Z}} \sup_{x\in[kt,(k+1)t]} |f(x)-f_t(x)| \\
		& \leq \sup_{k\in\mathbbm{Z}} \sup_{x\in[kt,(k+1)t]} \big( s|f(x)-f(kt)| + (1-s)|f(x)-f((k+1)t)|  \big)\\
		& \leq  \sup_{|x-y|\leq t} |f(x)-f(y)|\\
		& \leq t^\alpha \|f\|_{C_b^\alpha}.
	\end{align*}
	For the function $f_t$ it holds for $0<t\leq 1$ that 
	\begin{align*}
	\|f_t\|_{Lip}
	    &= \|f_t\|_\infty + \sup_{x\neq y} \frac{|f_t(x)-f_t(y)|}{|x-y|}\\
	    & \leq \|f\|_\infty + \sup_{k\in\mathbbm{Z}} \frac{|f(kt)-f((k+1)t)|}{t} \\
	    & \leq \|f\|_\infty + t^{\alpha-1} \sup_{x\neq y} \frac{|f(x)-f(y)|}{|x-y|^\alpha} \\
	    & \leq t^{\alpha-1} \|f\|_{C_b^\alpha}.
	\end{align*}
	Hence we obtain that
	  \begin{align*}
	   \|f\|_{(B(\RR), Lip)_{\alpha,\infty}}
	   & \leq \left[ \sup_{0<t\leq1} t^{-\alpha} \left( \|f-f_t\|_\infty + t \|f_t\|_{Lip}  \right) \right] \vee \sup_{t\geq 1}t^{-\alpha}\|f\|_\infty \\
	   & \leq  2\|f\|_{C_b^\alpha}.
	  \end{align*}

\end{proof}



\end{document}